\tikzstyle{vertex}=[auto=left,circle,draw=black,fill=white, inner sep=1.5]
\newtheorem{theorem}{Theorem}[section]
\newtheorem{conj}[theorem]{Conjecture}
\newtheorem{prop}[theorem]{Proposition}
\newtheorem{corollary}{Corollary}[theorem]
\newtheorem{lemm}{Lemma}[section]
\newtheorem{ex}{Example}[section] 
\newtheorem{prob}{Problem}[section]
\title{ Non-isomorphic Signatures on Some Generalised Petersen Graph}
\author{ Deepak\\
Department of Mathematics\\
Indian Institute of Technology Guwahati\\
Guwahati, India - 781039\\
Email: deepakmath55555@iitg.ac.in\\
\\
Bikash Bhattacharjya\\
Department of Mathematics\\
Indian Institute of Technology Guwahati\\
Guwahati, India - 781039\\
Email: b.bikash@iitg.ac.in
}
\begin{document}
\maketitle

\vspace{-0.3in}

\begin{center}{Abstract}\end{center}
In this paper we find the number of different signatures of $P(3,1), P(5,1)$ and $P(7,1)$ upto switching isomorphism, where $P(n, k)$ denotes the generalised Petersen graph, $2k < n$. We also count the number of non-isomorphic signatures on $P(2n+1,1)$ of size two for all $n \geq 1$, and we conjecture that any signature of $P(2n+1,1)$, upto switching, is of size at most $n+1$.

\noindent {\textbf{Keywords}: Signed graph, generalised Petersen graph, balance, switching, switching isomorphism. 
 
\section{Introduction}\label{intro}

Throughout the paper we consider simple graphs. For all the graph-theoretic terms that have not been defined but are used in the paper, see Bondy \cite{Bondy}. Harary was the first to introduced signed graph and balance \cite{F.Harary}. Harary \cite{Harary} used them to model social stress in small groups of people in social psychology. Subsequently, signed graphs have turned out to be valuable. The fundamental property of signed graphs is balance. A signed graph is balanced if all its cycles have positive sign product. The second basic property of signed graphs is switching equivalence. Switching is a way of turning one signature of a graph into another, without changing cycle signs. Many properties of signed graphs are unaltered by switching, the set of unbalanced cycles is a notable example. In \cite{Sivaraman}, the non-isomorphic signatures on the Heawood graph are studied. The author in \cite{T.Zaslavsky} determined the non-isomorphic signed Petersen graph, using the fact that the minimal signature on a cubic graph is a matching. Using the same technique, we find the number of non-isomorphic signatures on $P(3,1), P(5,1)$ and $P(7,1)$. We also determine the number of non-isomorphic signatures of size two in $P(2n+1,1)$ for all $n\geq1$. 

\section{Preliminaries}\label{prelim}

A \textit{signified graph} is a graph $G$ together with an assignment of $+$ or $-$ signs to its edges. If $\Sigma$ is the set of negative edges, then we denote the signified graph by $(G, \Sigma)$. The set $\Sigma$ is called the signature of $(G, \Sigma)$. Signature $\Sigma$ can also  be viewed as a function from $E(G)$ into $\{+1, -1\}$. A \textit{resigning (switching)} of a signified graph at a vertex $v$ is to change the sign of each edge incident to $v$. We say $(G,\Sigma_{2})$ is \textit{switching equivalent} to $(G,\Sigma_{1})$ if it is obtained from $(G,\Sigma_{1})$ by a sequence of switchings. Equivalently, we say that $(G,\Sigma_{2})$ is switching equivalent to $(G,\Sigma_{1})$ if there exists a function $f:V\rightarrow \{+1, -1\}$ such that \linebreak[4] $\Sigma_{2}(e)$ = $f(u)\Sigma_{1}(e)f(v)$ for each edge $e=uv$ of $G$. Resigning defines an equivalence relation on the set of all signified graphs over $G$ (also on the set of signatures). Each such class is called a \textit{signed graph} and is denoted by $[G,\Sigma]$, where $(G,\Sigma)$ is any member of the class.

We say two signified graphs $(G, \Sigma_{1})$ and $(H, \Sigma_{2})$ to be \textit{isomorphic} if there exists a graph isomorphism $\psi : V(G) \rightarrow V(H)$ which preserve the edge signs. We denote it by $\Sigma_{1} \cong \Sigma_{2}$. They are said to be switching isomorphic if $\Sigma_{1}$ is isomorphic to a switching of $\Sigma_{2}$.That is, there exists a representation $(H, \Sigma_{2}')$ which is equivalent to $(H, \Sigma_{2})$ such that $\Sigma_{1} \cong \Sigma_{2}'$. We denote it by $\Sigma_{1} \sim \Sigma_{2}$. 

\begin{prop} \cite{Naserasr} 
  If $G$ has $m$ edges, $n$ vertices and $c$ components, then there are $2^{(m-n+c)}$ distinct signed graphs of $G$.
\end{prop}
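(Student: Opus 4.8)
The plan is to recast switching as a linear-algebraic operation over the two-element field $\Fl_2 = \{0,1\}$ and then count cosets. First I would identify each signature with a vector in $\Fl_2^{E(G)}$ by sending a positive edge to $0$ and a negative edge to $1$; this makes the set of all signatures the vector space $\Fl_2^{E(G)}$ of size $2^m$. The key observation is that switching at a single vertex $v$ — flipping the signs of all edges incident to $v$ — corresponds exactly to adding the characteristic vector $\delta_v \in \Fl_2^{E(G)}$ of the edge set incident to $v$. More generally, switching by a vertex subset $S \subseteq V(G)$ adds $\sum_{v \in S} \delta_v$, which is the characteristic vector of the edge cut separating $S$ from $V(G) \setminus S$ (edges with both endpoints in $S$, or both in the complement, are flipped twice, hence left unchanged).

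Next I would observe that the collection of all such switching vectors is precisely the image $B$ of the linear ``coboundary'' map $\partial : \Fl_2^{V(G)} \to \Fl_2^{E(G)}$ sending the indicator of $S$ to the indicator of its cut; this image is the cut (cocycle) space of $G$, a subspace of $\Fl_2^{E(G)}$. Since performing a switching amounts to adding an element of $B$, two signatures are switching equivalent if and only if they lie in the same coset of $B$. Hence the number of distinct signed graphs equals the index $2^m / |B| = 2^{m - \dim B}$.

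It then remains to compute $\dim B$, which by the rank--nullity theorem equals $n - \dim(\ker \partial)$. The main step is to identify $\ker \partial$: a subset $S$ lies in the kernel exactly when every edge has both or neither endpoint in $S$, i.e.\ when $S$ is a union of connected components of $G$. There are exactly $2^c$ such subsets, so $\dim(\ker \partial) = c$ and therefore $\dim B = n - c$. Substituting gives $2^{m-(n-c)} = 2^{m-n+c}$ distinct signed graphs, as claimed.

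The main obstacle I anticipate is the careful verification that $\ker \partial$ consists precisely of the unions of components — in particular, arguing that within each connected component the only vertex subsets contributing no cut are the empty set and the whole component. This uses connectivity: if $S$ meets a component but does not contain all of it, then since the component is connected some edge of it crosses the boundary of $S$ and hence appears in the cut, contradicting membership in the kernel. Once this identification is in place, the count follows immediately from rank--nullity.
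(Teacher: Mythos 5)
Your proof is correct, and since the paper states this proposition only with a citation to Naserasr--Rollov\'a--Sopena and gives no internal proof, there is no in-paper argument to compare against; what you have written is the standard and complete justification. Every step checks out: switchings by vertex sets realize exactly the cut space $B=\mathrm{im}\,\partial \subseteq \mathbb{F}_2^{E(G)}$ (and conversely any element of $B$ is a composition of single-vertex switchings, so the coset relation really is switching equivalence), the kernel computation $\ker\partial \cong$ unions of components uses connectivity exactly as you say, and rank--nullity gives $\dim B = n-c$, hence $2^m/2^{n-c}=2^{m-n+c}$ classes. It is worth noting there is a dual route through the paper's own Theorem~\ref{Signature}: equivalence classes are determined by their sets of unbalanced cycles, and the sign function on cycles is determined by its values on a cycle basis, of which there are $m-n+c$ elements (the cycle space is the orthogonal complement of your cut space, which is why the two counts agree). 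That route needs one extra step your argument gets for free, namely surjectivity --- that every $\pm$ assignment on a cycle basis is actually realized by some signature, e.g.\ by fixing a spanning forest positive and choosing the signs of the $m-n+c$ non-tree edges --- whereas your coset argument needs no realizability lemma. Your approach also generalizes verbatim to counting switching classes of any group-valued gain structure over an abelian group, which the cycle-basis count does as well but less transparently.
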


One of the first theorems in the theory of signed graphs tells that the set of unbalanced cycles uniquely determines the class of signed graphs to which a signified graph belongs. More precisely, we state the following theorem.

\begin{theorem}\cite{Zaslavsky}\label{Signature}
Two signatures $\Sigma_{1}$ and $\Sigma_{2}$ of a graph $G$ are equivalent if and only if they have the same set of unbalanced cycles.
\end{theorem}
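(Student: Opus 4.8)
The plan is to prove both implications separately, treating the edge signs as elements of the multiplicative group $\{+1,-1\}$. For a cycle $C$, write its \emph{sign} under a signature $\Sigma$ as $\sigma_\Sigma(C) = \prod_{e \in C}\Sigma(e)$, so that $C$ is unbalanced precisely when $\sigma_\Sigma(C) = -1$.

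For the forward direction, suppose $\Sigma_2$ is obtained from $\Sigma_1$ by a single switching at a vertex $v$; the general case follows by induction on the number of switchings. First I would observe that any cycle $C$ either avoids $v$, in which case none of its edge signs change, or passes through $v$, in which case exactly two of its edges are incident to $v$ and both flip sign. In either case $\sigma_{\Sigma_2}(C) = \sigma_{\Sigma_1}(C)$, so the cycle sign is a switching invariant and the two signatures share the same set of unbalanced cycles.

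For the converse, I would form the product signature $\tau(e) = \Sigma_1(e)\Sigma_2(e)$ and show it is balanced. For every cycle $C$ we have $\sigma_\tau(C) = \sigma_{\Sigma_1}(C)\,\sigma_{\Sigma_2}(C)$, and since $\Sigma_1$ and $\Sigma_2$ have the same unbalanced cycles the two factors agree, whence $\sigma_\tau(C) = +1$. Thus $\tau$ is balanced, and the task reduces to the key lemma that a balanced signature switches to the all-positive one.

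To establish this lemma I would argue componentwise, so assume $G$ is connected, fix a spanning tree $T$ rooted at $r$, and define $f:V \to \{+1,-1\}$ by letting $f(w)$ be the product of the $\tau$-values along the unique $r$-$w$ path in $T$, with $f(r)=+1$. Then every tree edge becomes positive after switching by $f$. For a non-tree edge $e = uv$, the product of $\tau$ along the tree path from $u$ to $v$ equals $f(u)f(v)$, since the segment common to the two root paths occurs twice and cancels; hence balance of the fundamental cycle of $e$ gives $\tau(e)f(u)f(v) = +1$, that is $f(u)\tau(e)f(v) = +1$. Therefore switching $\tau$ by $f$ yields the all-positive signature, and the same $f$ satisfies $\Sigma_2(e) = f(u)\Sigma_1(e)f(v)$ for every edge $e = uv$, proving $\Sigma_1$ and $\Sigma_2$ equivalent. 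The main obstacle is exactly this last lemma: the bookkeeping in the fundamental-cycle computation, and in particular the verification that the path-product around a fundamental cycle factors as $\tau(e)f(u)f(v)$, is where the care is needed.
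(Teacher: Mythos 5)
The paper does not actually prove this statement: Theorem~\ref{Signature} is imported from Zaslavsky's \emph{Signed graphs} paper with a citation, so there is no internal proof to compare yours against. Your argument is correct and is essentially the canonical proof of this classical result: the forward direction is right because a cycle through the switched vertex $v$ contains exactly two edges incident to $v$ (valid here since $G$ is simple, so there are no loops to worry about), and the converse correctly reduces to Harary's balance criterion applied to the product signature $\tau=\Sigma_1\Sigma_2$. The fundamental-cycle bookkeeping you flag as the delicate point does go through: $f(u)f(v)$ equals the $\tau$-product along the $u$--$v$ tree path because the doubled segment from the root to the branch vertex cancels in $\{+1,-1\}$, and then positivity of each fundamental cycle gives $f(u)\tau(e)f(v)=+1$ for every non-tree edge, which unwinds to exactly the paper's definition of switching equivalence, $\Sigma_2(e)=f(u)\Sigma_1(e)f(v)$.
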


\section{Notations}\label{notation}

The \textit{distance} between two vertices $x$ and $y$ in a graph $G$, denoted $d_{G}(x,y)$, is the length of a shortest path connecting $x$ and $y$. The \textit{distance} between two edges $e_{1}$ and $e_{1}$ of a graph $G$, denoted $d_{G}(e_{1},e_{2})$, is the number of vertices of a shortest path connecting their end points(vertices). For example, $d_{G}(e_{1},e_{2})=2$  for the edges $e_{1}$ = $u_{0}u_{1}$ and $e_{2}$ = $u_{2}u_{3}$ of the graph $P(5,1)$ in Figure~\ref{P5}. Throughout this paper, the solid lines and dotted lines in a graph represent positive and negative edges respectively.

In a signed graph $[G, \Sigma]$, a signature $\Sigma^{'}$ which is equivalent to $\Sigma$ is said to be a \textit{minimal signature} if the number of edges in $\Sigma^{'}$ is minimum among all equivalent signatures of $\Sigma$. We denote the number of edges in $\Sigma^{'}$ by $|\Sigma^{'}|$. For example, if $[G, \Sigma]$ is balanced then $\Sigma^{'}$ = $\emptyset$ and thus $|\Sigma^{'}| = 0$. Notice that there may be two or more than two minimal signatures for a signed graph $[G, \Sigma]$. For example, for the signed graph $[K_3, \Sigma]$, where $\Sigma$ = $\{12, 23, 31\}$, the equivalent signatures $\Sigma_1 = \{12\}$ and $\Sigma_2 = \{23\}$ are minimal signatures of $[K_3, \Sigma]$. This shows that minimality of a signature is not unique. The following theorem tells about the maximum degree of a vertex in a minimal signature, when the signature is considered as a spanning subgraph of a given graph $G$.

\begin{theorem}\label{Min}
Let $[G, \Sigma]$ be a signed graph on $n$ vertices and let $\Sigma^{\prime}$ be an equivalent minimal signature of $\Sigma$. Then $d_{G_{\Sigma^{\prime}}}(v)$ $\leq$ $\lfloor \frac{n-1}{2} \rfloor$ for each vertex $v \in V(G_{\Sigma^{\prime}})$.
\end{theorem}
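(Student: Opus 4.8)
The plan is to exploit the single-vertex switching operation together with the minimality of $\Sigma'$. Fix a vertex $v \in V(G)$ and write $k = d_{G_{\Sigma'}}(v)$ for the number of edges of $\Sigma'$ (that is, negative edges) incident to $v$. The guiding observation is that resigning at $v$ leaves every edge not incident to $v$ unchanged, so it affects only the $d_G(v)$ edges at $v$; and because $\Sigma'$ is already minimal, this particular switch cannot decrease the total number of negative edges. I would use only single-vertex switches here: minimality is with respect to all switchings, so in particular no resigning at a single vertex can reduce $|\Sigma'|$, and that local constraint is all the argument needs.

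First I would compute precisely how switching at $v$ changes the size of the signature. Resigning at $v$ turns each of the $k$ negative edges at $v$ positive and turns each of the remaining $d_G(v) - k$ positive edges at $v$ negative, while fixing all other edges. Hence the resulting signature $\Sigma''$, which is equivalent to $\Sigma'$, satisfies $|\Sigma''| = |\Sigma'| - k + (d_G(v) - k) = |\Sigma'| + d_G(v) - 2k$. Since $\Sigma'$ is minimal we must have $|\Sigma''| \geq |\Sigma'|$, and therefore $d_G(v) - 2k \geq 0$, i.e. $k \leq d_G(v)/2$.

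Finally I would convert this local estimate into the stated global bound. Because $G$ is a simple graph on $n$ vertices, $d_G(v) \leq n-1$, so $k \leq (n-1)/2$; and since $k = d_{G_{\Sigma'}}(v)$ is a non-negative integer, this sharpens to $d_{G_{\Sigma'}}(v) \leq \lfloor \frac{n-1}{2} \rfloor$, as claimed. I expect the proof to be short, and I do not anticipate a genuine obstacle. The only point requiring care is the bookkeeping in the size computation—correctly counting how many edges at $v$ flip in each direction—and the remark that single-vertex switches already suffice to produce the bound at every vertex, so there is no need to analyze switchings over larger vertex sets.
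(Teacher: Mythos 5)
Your proof is correct, and its central computation---resigning at $v$ replaces the $k$ negative edges at $v$ by $d_G(v)-k$ negative ones, so the new equivalent signature has size $|\Sigma'| + d_G(v) - 2k$---is exactly the computation underlying the paper's proof; but you arrange the logic differently, and your arrangement is the tighter one. The paper argues by descent: starting from a signature with a vertex of negative degree exceeding $\frac{n-1}{2}$, it resigns there (a strict decrease, since $d_G(u)\le n-1$), iterates until it obtains an equivalent signature $\tilde{\Sigma}$ whose degrees are all at most $\lfloor \frac{n-1}{2}\rfloor$, and then asserts $|\tilde{\Sigma}| = |\Sigma'|$ and that $\Sigma'$ itself has the stated degree bound. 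That last step is a genuine leap: a greedy local descent produces \emph{some} small-degree equivalent signature, not necessarily a minimal one, and in any case says nothing directly about the given $\Sigma'$. You avoid this by applying minimality as a first-order optimality condition directly to $\Sigma'$: no single-vertex switch can shrink a minimal signature, hence $d_G(v)-2k\ge 0$ at every vertex, so $k\le d_G(v)/2\le (n-1)/2$, and integrality gives the floor. (In effect you run the paper's contradiction on $\Sigma'$ itself, where one switch already suffices, instead of on an arbitrary equivalent signature where iteration and an unjustified identification are needed.) Note also that your intermediate estimate $d_{G_{\Sigma'}}(v)\le \lfloor d_G(v)/2\rfloor$ is strictly stronger than the theorem as stated: for a cubic graph it forces degree at most $1$ at every vertex, which is precisely Theorem~\ref{matching} (every minimal signature of a cubic graph is a matching), the key fact the paper imports from Zaslavsky---so your argument proves that too, for free.
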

\begin{proof}
Let, if possible, there exists a vertex $u \in V(G_{\Sigma})$ such that $d_{G_{\Sigma}}(u) > \frac{n-1}{2}$. Resign at $u$ to get an equivalent signature $\Sigma_1$. It is clear that $|\Sigma| > |\Sigma_1|$. We apply the same operation on $\Sigma_1$, if $G_{\Sigma_1}$ has a vertex of degree greater than $\frac{n-1}{2}$. Repeated application, if needed, of this process will ultimately give us an equivalent signature $\tilde{\Sigma}$ of minimum number of edges such that degree of every vertex of $\tilde{\Sigma}$ is at most $\lfloor \frac{n-1}{2} \rfloor$. It is clear that $|\tilde{\Sigma}| = |\Sigma^{\prime}|$, and every vertex of $\Sigma^{\prime}$ have degree at most $\lfloor \frac{n-1}{2} \rfloor$.
\end{proof}

The following theorem will remain our key result throughout this paper.

\begin{theorem}\cite{T.Zaslavsky}\label{matching}
Every minimal signature of a cubic graph is a matching.
\end{theorem}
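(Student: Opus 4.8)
The plan is to argue by contradiction using a single local switching operation, exploiting the fact that every vertex of a cubic graph has exactly three incident edges. First I would fix a minimal signature $\Sigma'$ and suppose, for contradiction, that the negative-edge subgraph $G_{\Sigma'}$ is not a matching; then some vertex $v$ is incident to at least two negative edges.

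The key computation is to track how the number of negative edges changes when we switch at $v$. If $v$ has exactly $d$ negative incident edges (so $0 \le d \le 3$), then switching at $v$ turns these $d$ edges positive and turns the remaining $3-d$ positive edges negative, changing the total count of negative edges by $(3-d)-d = 3-2d$. Hence whenever $d \ge 2$ this quantity is negative, so switching at $v$ strictly decreases $|\Sigma'|$ while producing an equivalent signature. This contradicts the minimality of $\Sigma'$.

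Therefore every vertex of $G_{\Sigma'}$ has at most one incident negative edge, that is $d_{G_{\Sigma'}}(v) \le 1$ for all $v$, which is exactly the statement that the negative edges form a matching. This can be viewed as the sharpening of Theorem~\ref{Min} in the cubic case, where the generic bound $\lfloor \frac{n-1}{2}\rfloor$ is replaced by the much stronger bound $1$ coming from the small fixed degree. I expect the only point requiring care is the observation — immediate from the definition of switching — that the operation at $v$ yields a signature genuinely equivalent to $\Sigma'$, so that the strict decrease in edge count really does contradict minimality rather than merely producing a different, incomparable signature.
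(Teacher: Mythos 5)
Your proof is correct and is essentially the intended argument: the paper only cites this result from Zaslavsky without reproving it, and your computation that switching at a vertex with $d\ge 2$ negative incident edges changes the count by $(3-d)-d=3-2d<0$ is exactly the vertex-switching count used in the paper's own proof of Theorem~\ref{Min}, specialized to degree $3$. Nothing further is needed.
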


From now onward, matching of a graph $G$ stands for a minimal signature. With a few exceptions, most of the time switching transforms a matching (when considered as a signature) of $P(n, 1)$ to a new matching. The notation $\Sigma(e_1,e_2,\ldots,e_k)$ denotes a signature or a set of edges $\Sigma$ which contains the edges $e_1, e_2,\ldots,e_k$ of a graph. For example, in the graph $P(3,1)$ of Figure~\ref{p1}, $\Sigma(u_0u_1, v_1v_2)$ denotes a signature containing the edges $u_0u_1$ and $v_1v_2$.

Further, we say that two signatures $\Sigma_1$ and $\Sigma_2$ of a graph $G$ are \textit{automorphic} if there exists an automorphism $f$ of $G$ such that $uv \in \Sigma_1$ if and only if $f(u)f(v) \in \Sigma_2$. If two signatures are automorphic then they are said to be \textit{automorphic type} signatures. If two signatures $\Sigma_1$ and $\Sigma_2$ of a graph $G$ are not automorphic to each other, then we say that they are distinct automorphic type signatures. For example, in the signed graphs $[P(5,1), \{u_1u_2\}]$ and $[P(5,1), \{u_3u_4\}]$, the signatures $\{u_1u_2\}$ and $\{u_3u_5\}$ are automorphic type signatures.

\section{Generalised Petersen Graph} \label{gen-petersen}

Let $n$ and $k$ be positive integers such that $2 \leq 2k < n$. The generalized Petersen graph, denoted by $P(n,k)$, is defined to have the vertex set $\{u_{0},u_{1},\ldots,u_{n-1},v_{0},v_{1},\ldots,v_{n-1}\}$ and edge set 
$$\{u_{i}u_{i+1}~:~i = 0,1,\ldots,n-1\} \cup \{v_{i}v_{i+k}~:~i = 0,1,\ldots,n-1\} \cup \{u_{i}v_{i}~:~i = 0,1,\ldots,n-1 \},$$
where the subscripts are read modulo $n$. We call the cycle $u_0u_1\ldots u_{n-1}u_0$ as outer cycle and the cycle $v_0v_{k}v_{2k}\ldots v_0$ as inner cycle of $P(n,k)$. The edges of the form $u_iv_i$ are called the spokes of $P(n,k)$. It is clear that $P(2n+1,1)$ has $4n+2$ vertices and $6n+3$ edges. Now we discuss certain structural facts of $P(2n+1,1)$, where $n \geq 1$. 
 
\begin{theorem}\label{cycle}
For $n \geq 1$ and $2 \leq l \leq 2n+1$, the number of $2l$-cycles and the number of $(2n+1)$-cycles of $P(2n+1,1)$ are $2n+1$ and $2$, respectively. 
\end{theorem}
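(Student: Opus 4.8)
The plan is to exploit the planarity of $P(2n+1,1)$. Drawn in the standard way---the outer cycle as a large circle, the inner cycle as a concentric small circle, and the spokes as radial segments---the graph has exactly $2n+1$ quadrilateral bounded faces $Q_0,\ldots,Q_{2n}$, where $Q_i$ is bounded by $u_iu_{i+1},\,u_{i+1}v_{i+1},\,v_iv_{i+1},\,v_iu_i$, together with one $(2n+1)$-gonal inner face $I$ bounded by the inner cycle (and the unbounded outer face). The tool I would rely on is the standard consequence of the Jordan curve theorem: in a $2$-connected plane graph every simple cycle $C$ is the edge boundary of the set $F(C)$ of bounded faces it encloses, and $\bigcup F(C)$ is a closed disk. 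The whole argument then splits according to whether $I \in F(C)$, and in each case the length of $C$ is read off from a face-by-face incidence count.

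First I would treat the cycles with $I \notin F(C)$. Here $F(C)$ consists only of quadrilaterals, and since $Q_i$ and $Q_j$ share an edge only when $i,j$ are consecutive modulo $2n+1$, a connected---hence disk-forming---set of quadrilaterals must be a single block $Q_i, Q_{i+1},\ldots,Q_{i+s-1}$ of $s$ consecutive faces with $1 \le s \le 2n$ (enclosing all $2n+1$ of them yields the annulus, not a disk). Bookkeeping its boundary---$s$ outer edges, $s$ inner edges, and the two bounding spokes, the interior spokes being shared and hence internal---gives $|C| = 2s+2$. Since there are exactly $2n+1$ blocks of each fixed size $s$ (one per starting index) and $s=l-1$ runs over $\{1,\ldots,2n\}$ as $2l$ runs over the even lengths $\{4,\ldots,4n+2\}$, this produces precisely $2n+1$ cycles of each even length $2l$, $2\le l\le 2n+1$.

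Next I would treat $I \in F(C)$. Writing $a$ for the number of quadrilaterals in $F(C)$ and $r$ for the number of maximal blocks of consecutive quadrilaterals in $F(C)$, the same count gives $a$ outer edges, $(2n+1)-a$ inner edges (an inner edge lies on $C$ exactly when its quadrilateral is \emph{not} enclosed, because $I$ is), and $2r$ spokes, so that $|C| = (2n+1)+2r$. In particular $|C| = 2n+1$ forces $r=0$, i.e.\ $F(C)$ contains either no quadrilateral or all of them; these two possibilities give exactly the inner cycle and the outer cycle. Combining the two cases, every cycle with $I\notin F(C)$ has even length while every cycle with $I\in F(C)$ has odd length; hence the blocks of the second paragraph account for \emph{all} the even cycles, and the $(2n+1)$-cycles are precisely the inner and outer cycles, as claimed.

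The main obstacle I anticipate is making the face-enclosure correspondence fully rigorous and executing the edge bookkeeping without slips: one must argue carefully that every simple cycle is the boundary of the disk it encloses (Jordan curve theorem together with the $2$-connectivity of $P(2n+1,1)$), match each boundary-edge type with a precise face-incidence condition, and justify the ``number of runs'' count that converts these incidences into the closed forms $2s+2$ and $(2n+1)+2r$. A subtle point worth flagging is that when $I\in F(C)$ the enclosed region can be a disk even when the enclosed quadrilaterals do \emph{not} form a single block (the inner face acts as a hub), so $r$ may exceed $1$ there; this does not affect the count of $(2n+1)$-cycles since only $r=0$ is compatible with length $2n+1$, but it shows why the connected-block reduction is legitimate only in the case $I\notin F(C)$. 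As an easy consistency check I would note that the parity of $|C|$ is even precisely when $I\notin F(C)$, which both confirms the dichotomy and explains why no even cycle can attain the odd length $2n+1$.
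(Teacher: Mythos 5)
Your proposal is correct, and it takes a genuinely different route from the paper. The paper argues directly on the graph: it declares the two rim cycles to be the only $(2n+1)$-cycles, observes that an even cycle must use equally many $u_i$'s and $v_i$'s, and then simply asserts that every even cycle has the ladder form $u_iv_iv_{i+1}\ldots v_{i+(l-1)}u_{i+(l-1)}\ldots u_iu_i$, which immediately yields $2n+1$ cycles of each even length $2l$. Your planar face-enclosure argument proves exactly this assertion rather than assuming it: in your Case 1 ($I\notin F(C)$) the disk condition forces the enclosed quadrilaterals to be a consecutive block, and the boundary of a block of $s$ quadrilaterals is precisely the paper's ladder cycle with $s=l-1$; your Case 2 shows every cycle enclosing $I$ has odd length $(2n+1)+2r$, so the even count is complete and $|C|=2n+1$ forces $r=0$, recovering the two rim cycles that the paper dismisses as obvious. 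What your approach buys is rigor at the paper's weakest step (the unproved ``any even cycle is of this form'') plus a full classification of all cycles of $P(2n+1,1)$, including the odd cycles of every length $2n+1+2r$, at the modest cost of invoking planarity and the Jordan--Schoenflies machinery; the paper's argument is shorter and stays elementary but leaves the key structural claim unjustified. Your edge bookkeeping checks out in both cases (outer edges counted by enclosed quadrilaterals, inner edges by the complementary condition when $I$ is enclosed, spokes by run boundaries), and your parity remark correctly explains why the two cases cannot interfere, so the counts $2n+1$ and $2$ both follow.
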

\begin{proof}
It is obvious that the cycles given by $\{u_{0}, u_{1},\ldots,u_{2n}\}$ and $\{v_{0}, v_{1},\ldots,v_{2n}\}$ are the only cycles of length $2n+1$. This proves the second part of the theorem.

We prove the first part of the theorem by counting the number of $2l$-cycles, where $2 \leq l \leq (2n+1)$. It is important to note that any even cycle in $P(2n+1,1)$ must contain as many $u_{i}$'s as $v_{i}$'s. It is clear that for each $i = 0, 1, \cdots ,2n$, the cycle $u_{i}v_{i}v_{i+1}\ldots v_{i+(l-1)}u_{i+(l-1)}u_{i+(l-2)}\ldots u_{i+1}u_{i}$ is of length $2l$, and any even  cycle of $P(2n+1,1)$ is of this form. Hence there are $2n+1$ cycles of $P(2n+1,1)$ of length $2l$, where  $2 \leq l \leq (2n+1)$. This proves the theorem. 
\end{proof}

\begin{theorem}\label{distance}
Distance between any two edges in $P(2n+1,1)$ is at most $n+1$ for all $n \geq 1$.
\end{theorem}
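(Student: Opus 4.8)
The plan is to reduce the claim about edge distances to one about vertex distances. By the definition adopted in Section~\ref{notation}, $d_G(e_1,e_2)$ is the number of vertices on a shortest path joining an endpoint of $e_1$ to an endpoint of $e_2$; since a path of graph-length $\ell$ has $\ell+1$ vertices, this means
\[
d_G(e_1,e_2) = 1 + \min\{\, d_{P(2n+1,1)}(x,y) : x \in e_1,\ y \in e_2 \,\}.
\]
(This is consistent with the worked example, where $u_1,u_2$ are adjacent and $d_G(u_0u_1,u_2u_3)=2$.) Hence it suffices to prove that \emph{any} two edges admit a pair of endpoints at vertex-distance at most $n$.

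First I would record the vertex distances in $P(2n+1,1)$. Writing $d_C(i,j)$ for the circular distance in $\mathbb{Z}_{2n+1}$, which is always at most $n$, I claim $d(u_i,u_j)=d(v_i,v_j)=d_C(i,j)$ and $d(u_i,v_j)=d_C(i,j)+1$. The same-layer equalities hold because any detour through the other cycle uses an even number of spokes and so costs at least $2$ extra; the cross-layer formula holds because a $u$-to-$v$ path must use an odd number of spokes, the minimum being a single spoke placed on a shortest $i$--$j$ arc, which realizes length $d_C(i,j)+1$. The upshot is that the only vertex pairs attaining distance $n+1$ are a $u$-vertex paired with a $v$-vertex whose indices are circularly opposite.

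Next I would split into cases according to the types of $e_1$ and $e_2$ (outer edge $u_iu_{i+1}$, inner edge $v_jv_{j+1}$, or spoke $u_iv_i$). If $e_1$ and $e_2$ each contain an endpoint of the same layer (two $u$'s or two $v$'s), that same-layer pair has distance $d_C(\cdot,\cdot)\le n$ and we are done immediately. Since a spoke supplies one endpoint of each type, this disposes of every configuration except the case in which one edge is an outer edge (all endpoints $u$) and the other is an inner edge (all endpoints $v$). That single remaining case is the crux of the argument.

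The main obstacle is therefore the outer/inner case, say $e_1=u_iu_{i+1}$ and $e_2=v_jv_{j+1}$, where every admissible pair is cross-layer. Here the four relevant distances are $1+d_C$ evaluated at the index pairs $(i,j),(i,j{+}1),(i{+}1,j),(i{+}1,j{+}1)$, so I must show that the minimum of these four cyclic distances is at most $n-1$. I would argue by contradiction using the fact that $d_C(a,b)=n$ in $\mathbb{Z}_{2n+1}$ exactly when $(b-a)\bmod(2n+1)\in\{n,n+1\}$: setting $t=(j-i)\bmod(2n+1)$, the four requirements $d_C=n$ translate to $t\in\{n,n+1\}$, $t\in\{n-1,n\}$, $t\in\{n+1,n+2\}$, and again $t\in\{n,n+1\}$, whose intersection is empty. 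Thus at least one of the four cyclic distances is $\le n-1$, giving a cross-layer endpoint pair at vertex-distance $\le n$ and hence $d_G(e_1,e_2)\le n+1$, which completes the proof.
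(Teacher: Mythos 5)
Your proposal is correct, and it is a genuinely more rigorous route than the paper's own argument. The paper proves Theorem~\ref{distance} by the same top-level case split on edge types (spoke--spoke, outer--outer/inner--inner, outer--inner, outer--spoke), but each case is simply asserted: ``it is clear that,'' followed by the unproved claim that $v_nv_{n+1}$, $v_{n+1}v_{n+2}$ and $u_{n+1}v_{n+1}$ are the only edges at distance $n+1$ from $u_0u_1$. You instead supply the missing machinery: the reduction $d_G(e_1,e_2)=1+\min\{d(x,y):x\in e_1,\,y\in e_2\}$ (correctly calibrated against the paper's worked example on $P(5,1)$), the exact vertex-distance formulas $d(u_i,u_j)=d(v_i,v_j)=d_C(i,j)$ and $d(u_i,v_j)=d_C(i,j)+1$ with a sound parity argument about spoke usage, and then a clean unification: any two edges with endpoints in a common layer are immediately at distance at most $n+1$, which collapses all cases except outer edge versus inner edge. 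Your treatment of that crux case -- translating the four endpoint pairs into conditions on $t=(j-i)\bmod(2n+1)$ and checking that $\{n,n+1\}\cap\{n-1,n\}\cap\{n+1,n+2\}=\emptyset$ -- is a complete proof of precisely the point the paper leaves to inspection, and as a byproduct it recovers the paper's sharper assertions: your computation shows $\min d_C=n-1$ exactly when $t\in\{n,n+1\}$, identifying $v_nv_{n+1}$ and $v_{n+1}v_{n+2}$ as the unique inner edges at distance $n+1$ from $u_0u_1$, in agreement with the paper. What your approach buys is verifiability and reusability (the distance formulas would serve equally for Problem~\ref{prob2}); what the paper's buys is brevity. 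No gaps.
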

\begin{proof}
It is clear that the distance between any two spokes of $P(2n+1,1)$ is at most $n+1$. Further, distance between any two edges of the outer, as well as of the inner cycle, is at most $n+1$. Without loss of generality, if we pick the edge $u_0u_1$ from the outer cycle, then the edges $v_{n}v_{n+1}$ and $v_{n+1}v_{n+2}$ are the only edges of the inner cycle which are at maximum distance of $n+1$ from $u_0u_1$. Similarly, $u_{n+1}v_{n+1}$ is the only spoke which is at maximum distance of $n+1$ from $u_0u_1$. This completes the proof of the theorem. 
\end{proof}

For each $k = 0, 1, 2,\ldots,2n$, we define the permutations $\gamma, \rho_k, \delta_k$ of $V(G)$ such that for all \linebreak[4] $i = 0,1,2,\ldots,2n$, we have
\[\gamma(u_{i}) = v_{i}, \gamma(v_{i}) = u_{i} \text{ and }\rho_k(u_{i}) = u_{i+k}, \rho_k(v_{i}) = v_{i+k} ; \]
\[    \delta_k(u_i)=\left\{
                \begin{array}{ll}
                  u_i  \quad \text{if }\hspace{0.2cm} i=k, \\
                  u_l \quad \text{if } d(u_i,u_k) =    
                  d(u_l,u_k) \text{ and } i \neq k, i \neq l;
                \end{array}
              \right.\]
 \[   \delta_k(v_i)=\left\{
                \begin{array}{ll}
                  v_i  \quad \text{if }\hspace{0.2cm} i=k,  \\
                  v_l \quad \text{if } d(v_i,v_k) =    
                  d(v_l,v_k)  \text{ and }   i \neq k, i \neq l.
                \end{array}
              \right.  \]  
Note that each $\rho_k$ represents a clockwise rotation of $P(2n+1,1)$. Also each $\delta_k$ represents a reflection of $P(2n+1,1)$ about a line induced by the edge $u_kv_k$. Further, $\gamma$ just swaps the inner and outer cycles of $P(2n+1,1)$. Thus the automorphism group of $P(2n+1,1)$ is given by 
$$\text{Aut}(P(2n+1,1)) =\left\langle  \rho_k, \hspace{0.1cm} \delta_k, \hspace{0.1cm} \gamma \hspace{0.1cm} | \hspace{0.1cm} k = 0,1,2,\ldots,2n \right\rangle. $$ 
Accordingly, the automorphisms of  $P(2n+1,1)$ are some combination of rotations, reflections and interchanges of $v_{i}$'s with $u_{i}$'s. Using this fact, if $H_{1}$ and $H_{2}$ are two given subgraphs of $P(2n+1,1)$, it is easier to decide whether there is an automorphism of $P(2n+1,1)$ that maps $H_{1}$ onto $H_{2}$.

\begin{ex}\label{ex1}
The graph $P(3,1)$ is given in Figure~\ref{p1}. The automorphism $\rho_1$ rotates the graph $P(3,1)$ clockwise through the angle $\frac{2\pi}{3}$, the automorphism $\delta_1$ flips $P(3,1)$ about the line containing the edge $u_1v_1$ as its segment, and $\gamma$ switches the cycles $u_0u_1u_2$ and $v_0v_1v_2$
to each other.
\end{ex}
For more on automorphism group of generalised Petersen graph, see \cite{Frucht}.

\section{Signings on $P(3,1)$}\label{P(3,1)}

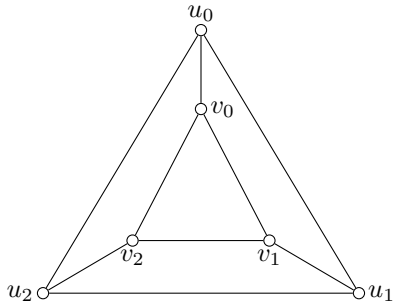
\begin{figure}[ht]
\centering
\begin{tikzpicture}[scale=0.7]
\node[vertex] (v1) at (10,0) {};\node[vertex] (v2) at (16,0) {};\node[vertex] (v3) at (13,5) {};
\node[vertex] (v4) at (11.7,1) {};\node[vertex] (v5) at (14.3,1) {};\node[vertex] (v6) at (13,3.5) {};

\node [left] at (v1) {$u_{2}$};
\node [right] at (v2) {$u_{1}$};
\node [above] at (v3) {$u_{0}$};
\node [below] at (v4) {$v_{2}$};
\node [below] at (v5) {$v_{1}$};
\node [right] at (v6) {$v_{0}$};

\foreach \from/\to in {v1/v2,v2/v3,v3/v1,v4/v5,v5/v6,v6/v4,v1/v4,v2/v5,v3/v6} \draw (\from) -- (\to);

\end{tikzpicture}
\caption{The graph $P(3,1)$.}\label{p1}
\end{figure}

\noindent
From Theorem~\ref{matching}, it is easy to see that finding non-isomorphic signatures on $P(3,1)$ is equivalent to determine the non-isomorphic matchings of $P(3,1)$ of size upto three. Let $M_k$ denotes a matching of size $k$, where $k = 0,1,2,3$. We classify all the automorphic type matchings of $P(3,1)$ of size upto three in the following lemmas. Let a  matching of size zero be denoted by $\Sigma_0$. 

\begin{lemm} \label{MS3.1}
The number of distinct automorphic type matchings of $P(3,1)$ of size one is two.
\end{lemm}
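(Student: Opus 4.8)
The plan is to observe that a matching of size one is simply a single edge, so classifying size-one matchings up to automorphism is the same as determining the edge orbits of $\mathrm{Aut}(P(3,1))$ acting on $E(P(3,1))$. First I would partition the nine edges into the three natural families: the outer-cycle edges $\{u_0u_1, u_1u_2, u_2u_0\}$, the inner-cycle edges $\{v_0v_1, v_1v_2, v_2v_0\}$, and the spokes $\{u_0v_0, u_1v_1, u_2v_2\}$, and then argue that these collapse into exactly two orbits.

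For the upper bound (at most two types), I would invoke the generators described in Section~\ref{gen-petersen}. The rotations $\rho_k$ act transitively within each of the three families, so all edges inside a single family are automorphic. The swap $\gamma$ sends each $u_i \mapsto v_i$, hence carries every outer-cycle edge to the corresponding inner-cycle edge; combined with the rotations this merges the outer and inner families into one orbit of six edges. Since $\gamma$ fixes every spoke, as $\gamma(u_iv_i)=v_iu_i=u_iv_i$, and the rotations permute the spokes among themselves, the three spokes form a single second orbit. Thus every edge is automorphic either to a cycle edge or to a spoke.

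For the lower bound (at least two types), I would exhibit an automorphism invariant separating the two candidate orbits. The natural choice is the number of triangles through an edge. The only triangles of $P(3,1)$ are the outer triangle $u_0u_1u_2$ and the inner triangle $v_0v_1v_2$, because for each $i$ the vertices $u_i$ and $v_i$ have no common neighbour. Hence each cycle edge lies in exactly one triangle while each spoke lies in none. Since this count is preserved by every automorphism, no automorphism can send a cycle edge to a spoke, so the two orbits are genuinely distinct.

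Combining the two bounds yields exactly two distinct automorphic types of size-one matchings. The only step requiring genuine care is the orbit separation; the routine verification that $\rho_k$ and $\gamma$ act on the three families as claimed is straightforward, whereas the triangle-count invariant is what makes the separation clean, so I expect that to be the crux of the argument.
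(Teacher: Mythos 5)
Your proof is correct and takes essentially the same approach as the paper, which simply exhibits the two representatives $\Sigma_{1}(u_0u_1)$ and $\Sigma_{2}(u_0v_0)$ and asserts without detail that every other edge is automorphic to one of them and that the two are not automorphic. Your orbit computation under $\rho_k$ and $\gamma$, together with the triangle-count invariant separating cycle edges from spokes, merely makes explicit the verification the paper dismisses as easy.
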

\begin{proof}
A matching of size one that does not contain a spoke is $\Sigma_{1} (u_0u_1)$. A matching of size one containing a spoke is $\Sigma_{2} (u_0v_0)$. It is easy to see that any other matching of size one is automorphic to either $\Sigma_1$ or $\Sigma_2$, and that $\Sigma_1$ is not automorphic to $\Sigma_2$. This proves the lemma.
\end{proof}

\begin{lemm} \label{MS3.2}
The number of distinct automorphic type matchings of $P(3,1)$ of size two is four.
\end{lemm}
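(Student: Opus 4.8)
The plan is to enumerate all matchings of size two directly and then collapse them into automorphic classes using the structure of $\text{Aut}(P(3,1))$. First I would observe that the nine edges of $P(3,1)$ split into exactly two orbits under the automorphism group: the six \emph{rim} edges (the three outer edges together with the three inner edges, which are identified by $\gamma$) and the three spokes. Since automorphisms preserve this partition, the automorphic type of a size-two matching is constrained by how many of its edges are spokes. Because the outer and the inner edges each form a triangle, no two outer edges and no two inner edges are disjoint; hence the only possible edge-type compositions of a size-two matching are two spokes, one spoke and one rim edge, or one outer and one inner edge.

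Next I would handle each composition. For two spokes, every pair of spokes is disjoint and the rotation $\rho_1$ cycles the three pairs, so there is a single type, represented by $\Sigma(u_0v_0, u_1v_1)$. For a spoke together with a rim edge, fixing the rim edge to be $u_0u_1$ forces the spoke to avoid $u_0$ and $u_1$, leaving only $u_2v_2$; the rotations produce all outer-edge cases and $\gamma$ produces the inner-edge cases, so again there is a single type, represented by $\Sigma(u_0u_1, u_2v_2)$. For one outer and one inner edge I would fix the outer edge as $u_0u_1$ and distinguish the three inner edges by which position (mod $3$) they \emph{miss}: $v_0v_1$ misses the same position as $u_0u_1$ (the \emph{aligned} case), while $v_1v_2$ and $v_2v_0$ each miss a different position (the \emph{misaligned} case). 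This gives the candidate representatives $\Sigma(u_0u_1, v_0v_1)$ and $\Sigma(u_0u_1, v_1v_2)$.

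To finish I would verify the orbit sizes and the pairwise non-isomorphism. The three aligned matchings are permuted by $\rho_1$ into one orbit; the six misaligned matchings split into two rotation orbits of size three which are interchanged by $\gamma$, so they form a single orbit. A bookkeeping check then confirms the total $3 + 6 + 3 + 6 = 18$, which matches the count $\binom{9}{2} - 18 = 18$ of all size-two matchings. For non-isomorphism, the edge-type composition (two spokes, one spoke, or no spoke) separates the spoke-containing types from each other and from the two rim–rim types; to separate the aligned from the misaligned type I would use the automorphism-invariant count of edges of $P(3,1)$ joining the two matching edges, which equals $2$ for $\Sigma(u_0u_1, v_0v_1)$ (the spokes $u_0v_0$ and $u_1v_1$) but only $1$ for $\Sigma(u_0u_1, v_1v_2)$ (the spoke $u_1v_1$).

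The step I expect to be the main obstacle is the rim–rim analysis: one must be careful both to see that the six misaligned matchings genuinely fuse into a \emph{single} class (the merging is accomplished by $\gamma$, not by the rotations acting alone) and to exhibit an invariant that provably distinguishes the aligned from the misaligned type, so that exactly four — and not three or five — classes remain.
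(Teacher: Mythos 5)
Your proposal is correct and takes essentially the same route as the paper's proof: classify the size-two matchings by their spoke content, arriving at the same four representatives $\Sigma(u_0v_0,u_1v_1)$, $\Sigma(u_0u_1,u_2v_2)$, $\Sigma(u_0u_1,v_0v_1)$ and $\Sigma(u_0u_1,v_1v_2)$. You additionally verify two points the paper leaves as assertions --- the orbit-size bookkeeping $3+6+3+6=18=\binom{9}{2}-18$, and the connecting-edge invariant ($2$ for the aligned type versus $1$ for the misaligned type) that separates the two rim--rim classes --- which substantiates the paper's ``it is easy to see'' claims without changing the method.
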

\begin{proof}
We classify the matchings of size two by looking at the distance between their edges. Theorem~\ref{distance} gives us that distance between any two edges of $P(3,1)$ is at most two.
\begin{enumerate}

\item[(i)] Let $M_{2}$ have no spoke. We may assume that one edge is $u_{0}u_{1}$. There are two possibilities for such matchings of size two. One of such matchings is $\Sigma_3 (u_{0}u_{1}, v_{0}v_{1})$ and another is $\Sigma_{4} (u_{0}u_{1}, v_{1}v_{2})$.

\item[(ii)] Let $M_{2}$ have one spoke and let it be $u_0v_0$. One of such matchings is $\Sigma_{5} (u_{0}v_{o}, u_{1}u_{2})$.

\item[(iii)] Let $M_{2}$ have two spokes. One of such matchings is $\Sigma_{6}(u_{0}v_{0}, u_{1}v_{1})$.
\end{enumerate}
Any other matching of $P(3,1)$ of size two is automorphic to $\Sigma_{3}, \Sigma_{4}, \Sigma_{5}$ or $\Sigma_{6}$. Further, no two of these matchings are automorphic. This concludes the proof of the lemma.
\end{proof}

\begin{lemm} \label{MS3.3}
The number of distinct automorphic type matchings of $P(3,1)$ of size three is two.
\end{lemm}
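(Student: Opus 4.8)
The plan is to first observe that any matching of size three in $P(3,1)$ is automatically a \emph{perfect} matching: the graph has six vertices, and three pairwise disjoint edges cover all of them. So the lemma amounts to classifying the perfect matchings of the prism $P(3,1)$ up to automorphism, and I would organise this classification exactly as in Lemmas~\ref{MS3.1} and~\ref{MS3.2}, namely according to the number of spokes a matching contains.

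First I would run a short parity argument to restrict the possible number of spokes. If a matching uses $s$ spokes, then those spokes cover $s$ outer vertices and $s$ inner vertices, so the remaining $3-s$ outer vertices must be matched among themselves along the outer triangle, and likewise for the inner triangle. Since each triangle has an odd number of vertices, $3-s$ must be even, forcing $s$ to be odd, i.e. $s\in\{1,3\}$. A direct check also disposes of $s=2$: two spokes would leave a single outer vertex $u_i$ and the inner vertex $v_i$, whose only common completion is the spoke $u_iv_i$, which returns us to $s=3$.

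Next I would exhibit a representative in each admissible case. For $s=3$ there is the unique all-spoke matching $\Sigma_{7}(u_0v_0,u_1v_1,u_2v_2)$. For $s=1$, using a rotation $\rho_k$ I may assume the spoke is $u_0v_0$; the remaining outer vertices $u_1,u_2$ are then forced to be matched by the outer edge $u_1u_2$ and the remaining inner vertices by $v_1v_2$, giving $\Sigma_{8}(u_0v_0,u_1u_2,v_1v_2)$. Applying $\rho_1$ and $\rho_2$ to $\Sigma_{8}$ shows that every one-spoke perfect matching is automorphic to $\Sigma_{8}$, so the case $s=1$ contributes just a single automorphic type.

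Finally I must show that $\Sigma_{7}$ and $\Sigma_{8}$ are not automorphic, which I expect to be the only genuine (if brief) obstacle. This follows from the description of $\text{Aut}(P(3,1))$ as being generated by the $\rho_k$, the $\delta_k$ and $\gamma$: each of these generators carries the set of spokes onto itself, so the number of spokes contained in a matching is an automorphism invariant. Since $\Sigma_{7}$ has three spokes while $\Sigma_{8}$ has one, no automorphism can map one onto the other. Hence there are exactly two distinct automorphic type matchings of $P(3,1)$ of size three, which proves the lemma.
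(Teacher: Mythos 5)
Your proof is correct and follows essentially the same route as the paper: classify the size-three matchings (which, as you note, are necessarily perfect) by the number of spokes, eliminate $s=0$ and $s=2$, and exhibit the unique one-spoke and three-spoke types; your parity argument and the observation that $\rho_k$, $\delta_k$, $\gamma$ preserve the spoke set merely make explicit what the paper asserts without proof. The only discrepancy is cosmetic: your labels are swapped relative to the paper, where $\Sigma_{7}$ denotes the one-spoke matching $\{u_{0}v_{0}, u_{1}u_{2}, v_{1}v_{2}\}$ and $\Sigma_{8}$ the all-spoke matching $\{u_{0}v_{0}, u_{1}v_{1}, u_{2}v_{2}\}$.
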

\begin{proof}
Any $M_3$ must contain at least one spoke, as at most one edge can be taken from the inner cycle as well as from the outer cycle. If a matching of size three contains two spokes of $P(3,1)$, then no other edge can be included in that matching. Thus, following are the possibilities for $M_3$.
\begin{enumerate}

\item[(i)] Let $M_{3}$ have one spoke and let it be $u_0v_0$. There is only one possibility for such a matching, and let it be $\Sigma_{7} (u_{0}v_{0}, u_{1}u_{2}, v_{1}v_{2})$. 

\item[(ii)] Let $M_{3}$ has three spokes and let that $M_3$ be $\Sigma_{8} (u_{0}v_{0}, u_{1}v_{1}, u_{2}v_{2})$.
\end{enumerate}
Any other matching of size three is automorphic to $\Sigma_{7}$ or $\Sigma_{8}$, and that $\Sigma_{7}$ is not automorphic to $\Sigma_{8}$. This completes the proof.
\end{proof}

The matchings obtained in the preceding lemmas along with $\Sigma_{0}$ give us nine different automorphic type matchings of $P(3,1)$ \textit{viz.,} $\Sigma_{0}, \Sigma_{1},\ldots,\Sigma_{8}$. However, some of these nine matchings may be switching isomorphic to each other. We have the following observations.

\begin{itemize}
\item In $\Sigma_{6}$, by resigning at $u_{0}, u_{1}, u_{2}$; we get a matching automorphic to $\Sigma_{2}$. Thus $\Sigma_{6} \sim \Sigma_{2}$.
\item In $\Sigma_{7}$, by resigning at $u_{1}, v_{1}, v_{0}$;  we get a matching automorphic to $\Sigma_{4}$. Thus $\Sigma_{7} \sim \Sigma_{4}$.
\item In $\Sigma_{8}$, by resigning at $u_{0}, u_{1}, u_{2}$; we get a matching automorphic to $\Sigma_{0}$. Thus $\Sigma_{8} \sim \Sigma_{0}$.
\end{itemize}

So we are left with the matchings $\Sigma_{0}, \Sigma_{1}, \Sigma_{2}, \Sigma_{3}, \Sigma_{4}, \Sigma_{5}$, and their corresponding signed graphs are depicted in Figure \ref{DS3}, where the label of the vertices correspond to that of Figure \ref{p1}. In the following theorem we show that these six matchings are not switching isomorphic to each other.

\begin{theorem}
There are exactly six signed $P(3,1)$ upto switching isomorphisms.
\end{theorem}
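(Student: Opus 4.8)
The plan is to establish the count in two directions: that the list $\Sigma_0,\Sigma_1,\ldots,\Sigma_5$ is exhaustive, and that its members are pairwise non-switching-isomorphic. The first direction is already in hand from the preceding development: by Theorem~\ref{matching} every signed $P(3,1)$ has a minimal signature that is a matching, Lemmas~\ref{MS3.1}--\ref{MS3.3} together with $\Sigma_0$ enumerate every matching up to automorphism as one of $\Sigma_0,\ldots,\Sigma_8$, and the three switching isomorphisms $\Sigma_6\sim\Sigma_2$, $\Sigma_7\sim\Sigma_4$, $\Sigma_8\sim\Sigma_0$ collapse the list to $\Sigma_0,\ldots,\Sigma_5$. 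Hence there are at most six classes, and the real content of the theorem is the reverse inequality: these six are genuinely different.

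For the lower bound I would introduce a numerical invariant insensitive to both switching and graph automorphisms. For a signature $\Sigma$ and each cycle length $\ell$, let $n_\ell(\Sigma)$ be the number of unbalanced cycles of $P(3,1)$ of length $\ell$. This triple $(n_3,n_4,n_6)$ is a switching-isomorphism invariant: by Theorem~\ref{Signature} switching leaves the \emph{set} of unbalanced cycles unchanged, while any automorphism of $P(3,1)$ sends cycles to cycles of the same length and preserves the sign product around a cycle, hence carries unbalanced cycles to unbalanced cycles of equal length. Consequently, if $\Sigma_i\sim\Sigma_j$ then their triples agree, so it suffices to compute the triples and check that they are distinct.

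To carry out the computation I would first fix, once and for all, the cycle inventory of $P(3,1)$ supplied by Theorem~\ref{cycle}: the two triangles (the inner and outer $3$-cycles), the three $4$-cycles of the form $u_iu_{i+1}v_{i+1}v_i$, and the three $6$-cycles. For each $\Sigma_i$ a cycle is unbalanced precisely when it contains an odd number of the negative edges of $\Sigma_i$, so each entry $n_\ell(\Sigma_i)$ is read off by a short incidence count. Tabulating the resulting triples $(n_3,n_4,n_6)$ for $i=0,\ldots,5$ and observing that all six are pairwise different finishes the proof; I expect that even the pair $(n_3,n_4)$ already separates the six signatures, which I would present as the economical version of the argument.

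The only step needing genuine care is the separation itself: a priori two inequivalent signed graphs could share the same cycle-length profile, in which case the coarse invariant $(n_3,n_4,n_6)$ would be inconclusive and one would have to fall back on Theorem~\ref{Signature} directly, proving that no automorphism of $P(3,1)$ maps the unbalanced-cycle set of one signature onto that of the other. The bookkeeping should confirm that no such collision occurs among $\Sigma_0,\ldots,\Sigma_5$, so this finer automorphism analysis is not needed; the remaining work is the routine but error-prone incidence counting, which I would double-check edge by edge against the fixed cycle inventory.
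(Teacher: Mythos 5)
Your proposal is correct and follows essentially the same route as the paper: the paper's proof likewise tabulates the numbers of negative $3$-cycles and negative $4$-cycles for $\Sigma_0,\ldots,\Sigma_5$ (its Table~1 realises exactly your ``economical version,'' since the pair $(n_3,n_4)$ already separates all six) and then invokes Theorem~\ref{Signature}. Your explicit remark that the counts $n_\ell$ are invariant not just under switching but also under automorphisms is a small point of extra care that the paper's proof leaves implicit, but the argument is the same.
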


\begin{proof}
The number of negative 3-cycles and negative $4$-cycles for the signed $P(3,1)$ shown in Figure~\ref{DS3} are given in Table~\ref{table1}.
\begin{table}[ht]
\begin{center}
\begin{tabular}{ |c|c|c|c|c|c|c| } 
 \hline
  & $\Sigma_{0}$ & $\Sigma_{1}$ & $\Sigma_{2}$ & $\Sigma_{3}$ & $\Sigma_{4}$ & $\Sigma_{5}$\\ 
 \hline
Number of negative $C_{3}$   & 0 & 1 & 0 & 2 & 2 & 1 \\
 \hline 
Number of negative $C_{4}$   & 0 & 1 & 2 & 0 & 2 & 3 \\ 
 \hline
\end{tabular}
\end{center}
\caption{Number of negative 3-cycles and negative $4$-cycles of some signed $P(3,1)$.}
\label{table1}
\end{table}
We see that the set of unbalanced cycles are different for all these six signatures. So by Theorem \ref{Signature}, we conclude that all these six signatures are pairwise not switching isomorphic. This completes the proof.
\end{proof}

\begin{figure}[ht]
\begin{minipage}{0.3\textwidth}
\begin{tikzpicture}[scale=0.45]
\node[vertex] (v1) at (10,0) {};\node[vertex] (v2) at (16,0) {};\node[vertex] (v3) at (13,5) {};
\node[vertex] (v4) at (11.7,1) {};\node[vertex] (v5) at (14.3,1) {};\node[vertex] (v6) at (13,3.5) {};
\node [below] at (13,-0.2) {$\Sigma_{0}$};

\foreach \from/\to in {v1/v2,v2/v3,v3/v1,v4/v5,v5/v6,v6/v4,v1/v4,v2/v5,v3/v6} \draw (\from) -- (\to);
\end{tikzpicture}
\end{minipage}
\hfill
\begin{minipage}{0.3\textwidth}
\begin{tikzpicture}[scale=0.45]
\node[vertex] (v1) at (10,0) {};\node[vertex] (v2) at (16,0) {};\node[vertex] (v3) at (13,5) {};
\node[vertex] (v4) at (11.7,1) {};\node[vertex] (v5) at (14.3,1) {};\node[vertex] (v6) at (13,3.5) {};
\node [below] at (13,-0.2) {$\Sigma_{1}$};

\foreach \from/\to in {v1/v2,v3/v1,v4/v5,v5/v6,v6/v4,v1/v4,v2/v5,v3/v6} \draw (\from) -- (\to);
\draw [dotted] (v2) -- (v3);

\end{tikzpicture}
\end{minipage}
\hfill
\begin{minipage}{0.3\textwidth}
\begin{tikzpicture}[scale=0.45]
\node[vertex] (v1) at (10,0) {};\node[vertex] (v2) at (16,0) {};\node[vertex] (v3) at (13,5) {};
\node[vertex] (v4) at (11.7,1) {};\node[vertex] (v5) at (14.3,1) {};\node[vertex] (v6) at (13,3.5) {};
\node [below] at (13,-0.2) {$\Sigma_{2}$};

\foreach \from/\to in {v1/v2,v2/v3,v3/v1,v4/v5,v5/v6,v6/v4,v1/v4,v2/v5} \draw (\from) -- (\to);
\draw [dotted] (v3) -- (v6);

\end{tikzpicture}
\end{minipage}
\end{figure}

\begin{figure}[ht]
\begin{minipage}{0.3\textwidth}
\begin{tikzpicture}[scale=0.45]
\node[vertex] (v1) at (10,0) {};\node[vertex] (v2) at (16,0) {};\node[vertex] (v3) at (13,5) {};
\node[vertex] (v4) at (11.7,1) {};\node[vertex] (v5) at (14.3,1) {};\node[vertex] (v6) at (13,3.5) {};
\node [below] at (13,-0.2) {$\Sigma_{3}$};

\foreach \from/\to in {v1/v2,v3/v1,v4/v5,v6/v4,v1/v4,v2/v5,v3/v6} \draw (\from) -- (\to);
\draw [dotted] (v2) -- (v3);\draw [dotted] (v5) -- (v6);

\end{tikzpicture}
\end{minipage}
\hfill
\begin{minipage}{0.3\textwidth}
\begin{tikzpicture}[scale=0.45]
\node[vertex] (v1) at (10,0) {};\node[vertex] (v2) at (16,0) {};\node[vertex] (v3) at (13,5) {};
\node[vertex] (v4) at (11.7,1) {};\node[vertex] (v5) at (14.3,1) {};\node[vertex] (v6) at (13,3.5) {};
\node [below] at (13,-0.2) {$\Sigma_{4}$};

\foreach \from/\to in {v1/v2,v3/v1,v4/v6,v5/v6,v1/v4,v2/v5,v3/v6} \draw (\from) -- (\to);
\draw [dotted] (v2) -- (v3);\draw [dotted] (v5) -- (v4);

\end{tikzpicture}
\end{minipage}
\hfill
\begin{minipage}{0.3\textwidth}
\begin{tikzpicture}[scale=0.45]
\node[vertex] (v1) at (10,0) {};\node[vertex] (v2) at (16,0) {};\node[vertex] (v3) at (13,5) {};
\node[vertex] (v4) at (11.7,1) {};\node[vertex] (v5) at (14.3,1) {};\node[vertex] (v6) at (13,3.5) {};
\node [below] at (13,-0.2) {$\Sigma_{5}$};

\foreach \from/\to in {v1/v4,v3/v1,v4/v5,v5/v6,v6/v4,v2/v5,v3/v2} \draw (\from) -- (\to);
\draw [dotted] (v2) -- (v1);\draw [dotted] (v3) -- (v6);

\end{tikzpicture}
\end{minipage}
\caption{The six signed $P(3,1)$.}
\label{DS3}
\end{figure}
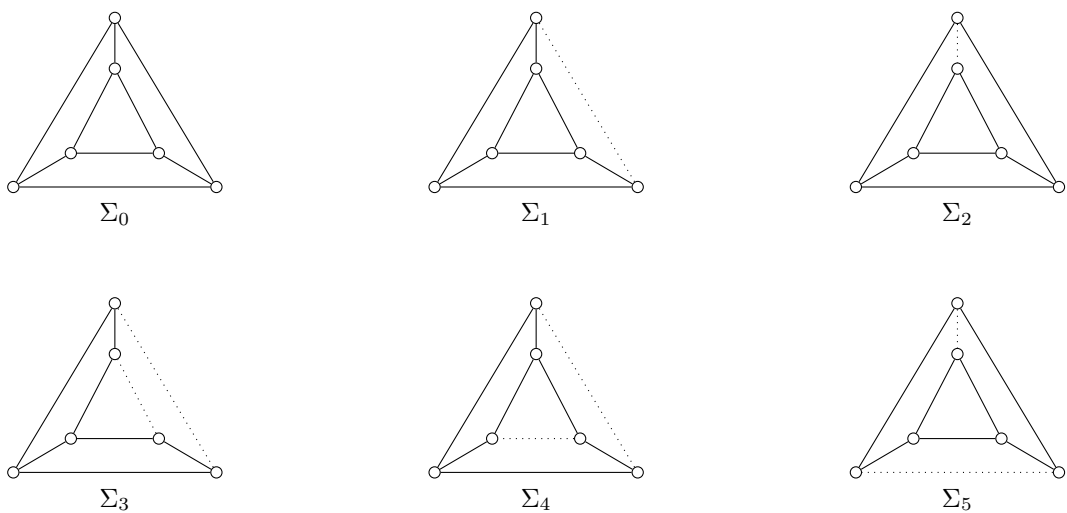

\section{Signings on $P(5,1)$}\label{P(5,1)}

The graph $P(5,1)$ is shown in Figure~\ref{P5}.
\begin{figure}[ht]
\centering
\begin{tikzpicture}[scale=0.5]

\node[vertex] (v1) at (12,6) {};\node[vertex] (v2) at (15.5,3.5) {};\node[vertex] (v3) at (14,0) {};\node[vertex] (v4) at (10,0) {};\node[vertex] (v5) at (8.5,3.5) {};
\node[vertex] (v6) at (12,4.5) {};\node[vertex] (v7) at (13.8,3.25) {};\node[vertex] (v8) at (13.0,1.4) {};\node[vertex] (v9) at (11,1.4) {};\node[vertex] (v10) at (10.3,3.25) {};

\node [above] at (v1) {$u_{0}$};\node [right] at (v2) {$u_{1}$};\node [right] at (v3) {$u_{2}$};\node [left] at (v4) {$u_{3}$};\node [left] at (v5) {$u_{4}$};
\node [below] at (v6) {$v_{0}$};\node [left] at (v7) {$v_{1}$};\node at (12.7,1.8) {$v_{2}$};\node at (11.3,1.8) {$v_{3}$};\node [right] at (v10) {$v_{4}$};

\foreach \from/\to in {v1/v2,v2/v3,v3/v4,v4/v5,v5/v1,v6/v7,v7/v8,v8/v9,v9/v10,v10/v6,v1/v6,v2/v7,v3/v8,v4/v9,v5/v10} \draw (\from) -- (\to);

\end{tikzpicture}
\caption{The graph $P(5,1)$.}\label{P5}
\end{figure}
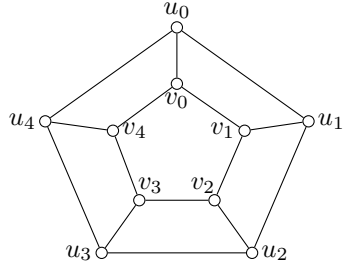
Recall from Theorem~\ref{matching} that finding non-isomorphic signatures of $P(5,1)$ is equivalent to finding matchings of $P(5,1)$ of sizes $0, 1, 2, 3, 4$ and $5$, upto switching isomorphism. We now classify all the automorphic type matchings of $P(5,1)$ of sizes upto five. We denote a matching of size zero by $\Sigma_{0}$. We emphasize that at most two edges of a matching may lie on the outer cycle or on the inner cycle. We use this fact to get the possible automorphic type matchings of different sizes. 

\begin{lemm} \label{MS5.1}
The number of distinct automorphic type matchings of $P(5,1)$ of size one is two.
\end{lemm}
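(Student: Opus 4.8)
The plan is to follow the classification already used for $P(3,1)$ in Lemma~\ref{MS3.1}. A matching of size one is a single edge, so counting distinct automorphic type matchings of size one is exactly counting the orbits of $\mathrm{Aut}(P(5,1))$ on $E(P(5,1))$. The edges come in three visible kinds, the outer-cycle edges $u_iu_{i+1}$, the inner-cycle edges $v_iv_{i+1}$, and the spokes $u_iv_i$, and I would first show that these collapse into exactly two orbits.

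I would take $\Sigma_1(u_0u_1)$ as a representative non-spoke edge and $\Sigma_2(u_0v_0)$ as a representative spoke. Applying the rotations $\rho_k$, every outer edge $u_iu_{i+1}$ is carried onto $u_0u_1$ and every inner edge $v_iv_{i+1}$ onto $v_0v_1$; since $\gamma$ interchanges the inner and outer cycles it sends $v_0v_1$ to $u_0u_1$, so all cycle edges are automorphic to $\Sigma_1$. The same rotations send every spoke $u_iv_i$ onto $u_0v_0$, and $\gamma$ fixes each spoke because $\gamma(u_iv_i)=v_iu_i=u_iv_i$; hence every spoke is automorphic to $\Sigma_2$. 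This already shows there are at most two automorphic types.

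The one step requiring a genuine argument is that $\Sigma_1$ is not automorphic to $\Sigma_2$, that is, that no automorphism carries a cycle edge to a spoke. The cleanest device is a cycle-counting invariant preserved by every automorphism. By Theorem~\ref{cycle} the shortest cycles of $P(5,1)$ have length four and there are exactly five of them, each of the form $u_iv_iv_{i+1}u_{i+1}$. Counting incidences, the spoke $u_0v_0$ lies on two such $4$-cycles, namely $u_0v_0v_1u_1$ and $u_4v_4v_0u_0$, whereas the cycle edge $u_0u_1$ lies on only one, namely $u_0u_1v_1v_0$. Since any automorphism maps $4$-cycles to $4$-cycles and therefore preserves the number of $4$-cycles through an edge, no automorphism can identify $\Sigma_1$ with $\Sigma_2$, and the count is exactly two. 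I expect this final verification to be the only real obstacle, though once the five $4$-cycles are listed it is routine; the orbit computations in the earlier steps are mechanical.
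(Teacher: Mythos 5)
Your proposal is correct and follows the same classification as the paper's proof, which simply exhibits the representatives $\Sigma_1(u_0u_1)$ and $\Sigma_2(u_0v_0)$ and asserts the rest as easy to see. Your orbit computation with $\rho_k$ and $\gamma$, and in particular the invariant counting $4$-cycles through an edge (two through a spoke, one through a cycle edge), supplies exactly the routine verification the paper leaves implicit, so there is no substantive difference in approach.
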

\begin{proof}We have only the following two cases.
\begin{enumerate}
\item[(i)] Let $M_{1}$ have no spoke. There is only one automorphic type matching of size one. One of such matchings is $\Sigma_{1}(u_0u_1)$.

\item[(ii)] Let $M_{1}$ have one spoke.  There is also only one automorphic type matching of size one. One of such matchings is $\Sigma_{2}(u_{0}v_{0})$. 
\end{enumerate}
Any other matching of $P(5,1)$ of size one is automorphic to $\Sigma_{1}$ or $\Sigma_{2}$, and that $\Sigma_{1}$ is not automorphic to $\Sigma_{2}$. This completes the proof.
\end{proof}

\begin{lemm} \label{MS5.2}
The number of distinct automorphic type matchings of $P(5,1)$ of size two is eight.
\end{lemm}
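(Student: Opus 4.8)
The plan is to follow exactly the template of the proof of Lemma~\ref{MS3.2}: partition the size-two matchings of $P(5,1)$ by the number of their edges that are spokes, giving three cases (no spoke, exactly one spoke, two spokes), and in each case normalise one edge to a fixed representative using the transitivity of $\text{Aut}(P(5,1))$ on each edge-class together with the swap $\gamma$ of the inner and outer cycles, then read off the automorphic types by letting the second edge range over the admissible positions and collapsing them under the stabiliser of the first. Theorem~\ref{distance} guarantees that only finitely many relative positions occur, so the enumeration is exhaustive. Since every element of $\text{Aut}(P(5,1))$ either preserves both cycles or swaps them via $\gamma$, the properties ``both edges lie on a common cycle'' and ``number of edges that are spokes'' are automorphism invariants, and these already separate the three cases from one another.

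In the no-spoke case I first normalise two edges on the same cycle, taking one to be $u_0u_1$; the only disjoint outer partners are $u_2u_3$ and $u_3u_4$, and the reflection $\delta_3$ (which fixes $u_0u_1$ setwise while sending $u_2\leftrightarrow u_4$) identifies them, so this yields the single type $\Sigma(u_0u_1,u_2u_3)$, and $\gamma$ shows the two-inner-edge case is automorphic to it. For one outer and one inner edge I fix the outer edge as $u_0u_1$ and let the inner edge run through $v_0v_1,v_1v_2,v_2v_3,v_3v_4,v_4v_0$; the stabiliser $\{\,\mathrm{id},\delta_3\,\}$ of $u_0u_1$ within the rotation-reflection subgroup collapses these into the three orbits $\Sigma(u_0u_1,v_0v_1)$, $\Sigma(u_0u_1,v_1v_2)$, $\Sigma(u_0u_1,v_2v_3)$. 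I would then verify that $\gamma$ creates no further merging among these mixed types (for instance $\gamma$ followed by a rotation sends $\Sigma(u_0u_1,v_1v_2)$ back into its own $\delta_3$-orbit), giving four no-spoke types.

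For exactly one spoke I normalise it to $u_0v_0$, whose stabiliser in $\text{Aut}(P(5,1))$ is generated by $\delta_0$ and $\gamma$ and hence has order four; acting with this group on the six admissible cycle edges $u_1u_2,u_2u_3,u_3u_4,v_1v_2,v_2v_3,v_3v_4$ produces precisely the two orbits $\Sigma(u_0v_0,u_1u_2)$ and $\Sigma(u_0v_0,u_2u_3)$. For two spokes I normalise one to $u_0v_0$; rotations reduce the other to $u_jv_j$ with $1\le j\le 4$, and the reflection $\delta_0$ (sending $j\mapsto 5-j$) leaves the two types $\Sigma(u_0v_0,u_1v_1)$ and $\Sigma(u_0v_0,u_2v_2)$. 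Adding $4+2+2$ gives the eight candidate types.

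The final step is to check that these eight representatives are pairwise non-automorphic. The spoke count and the same-cycle property handle the separation across cases, and within each case I distinguish the remaining pairs by the edge-distance of Theorem~\ref{distance} together with, for the two mixed types at edge-distance two, whether the two edges lie on a common $4$-cycle (only $\Sigma(u_0u_1,v_0v_1)$ does). The hard part will not be any single identification but the bookkeeping of the combined action of the reflections $\delta_k$ and the swap $\gamma$: the main risk is over- or under-counting orbits by overlooking an identification produced by $\gamma$ (as in the one-spoke case, where $\gamma$ is essential to merging the outer and inner partners) or by double-counting a configuration fixed by a nontrivial stabiliser element. I would therefore carry out each orbit computation explicitly against the stabiliser I have just described, which is the only place genuine care is required.
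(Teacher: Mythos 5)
Your proof is correct, and all eight of your representatives match the paper's: your four no-spoke types $\Sigma(u_0u_1,u_2u_3)$, $\Sigma(u_0u_1,v_0v_1)$, $\Sigma(u_0u_1,v_1v_2)$, $\Sigma(u_0u_1,v_2v_3)$ are the paper's $\Sigma_5,\Sigma_3,\Sigma_4,\Sigma_8$; your one-spoke types $\Sigma(u_0v_0,u_1u_2)$ and $\Sigma(u_0v_0,u_2u_3)$ are, after a rotation, the paper's $\Sigma_6(u_0u_1,u_2v_2)$ and $\Sigma_9(u_0u_1,u_3v_3)$; and your two-spoke types are $\Sigma_7$ and $\Sigma_{10}$. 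However, your route differs from the paper's in its case decomposition: you partition by the number of spokes ($4+2+2=8$), which is in fact the template the paper uses for Lemma~\ref{MS3.2} and for the size-three-and-larger matchings of $P(5,1)$, whereas the paper's own proof of Lemma~\ref{MS5.2} partitions by the distance between the two edges, listing five types at distance two ($\Sigma_3,\ldots,\Sigma_7$) and three at distance three ($\Sigma_8,\Sigma_9,\Sigma_{10}$) and asserting, case by case, that any other matching is automorphic to one of these. What your approach buys is verifiability: the explicit stabiliser computations (the stabiliser $\{\mathrm{id},\delta_3\}$ of $u_0u_1$, the order-four stabiliser $\langle\delta_0,\gamma\rangle$ of $u_0v_0$) turn completeness into a finite orbit count rather than an assertion, and your check that $\gamma$ followed by a rotation produces no further merging among the mixed pairs correctly handles the unordered-pair subtlety (an automorphism exchanging the roles of the two edges), which the paper's listing glosses over. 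What the paper's distance-based split buys is that the invariant separating types within a case (edge distance, via Theorem~\ref{distance}) is built into the case structure from the start, whereas you need the supplementary common-$4$-cycle criterion to separate $\Sigma(u_0u_1,v_0v_1)$ from $\Sigma(u_0u_1,v_1v_2)$; your criterion is sound, since $\mathrm{Aut}(P(5,1))\cong D_5\times\mathbb{Z}_2$ preserves the spoke set, so spoke count, edge distance, and lying on a common $4$-cycle are all automorphism invariants.
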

\begin{proof}
We classify the matchings of size two by looking at the distance between the edges of the matching.
\begin{enumerate}
\item[(i)] Let the edges of the matching be at distance two. There are five different automorphic type matchings of size two and one of each such automorphic type matchings is $\Sigma_{3}(u_{0}u_{1}, v_{0}v_{1}), \Sigma_{4}(u_{0}u_{1}, v_{1}v_{2})$, $ \Sigma_{5}(u_{0}u_{1}, u_{2}u_{3}), \Sigma_{6}(u_{0}u_{1}, v_{2}u_{2})$ and $\Sigma_{7}(u_{0}v_{0}, u_{1}v_{1})$. Let $M_2$ be  a matching of size two other than $\Sigma_3, \Sigma_4, \Sigma_5, \Sigma_6$ and $\Sigma_7$ whose edges are at distance two. Note that $M_2$ must contain either two spokes, or two edges from outer cycle, or two edges from inner cycle, or one edge from outer cycle and one from inner cycle, or one edge from outer/inner cycle and one spoke. In each of these cases, $M_2$ is automorphic to either $\Sigma_7, \Sigma_5, \Sigma_3, \Sigma_4$ or $\Sigma_6$. Thus $\Sigma_3, \Sigma_4, \Sigma_5, \Sigma_6$ and $\Sigma_7$ are the only automorphic type matchings of size two whose edges are at distance two. It is clear that these matchings are pairwise non-automorphic.

\item[(ii)] Let edges of $M_{2}$ be at distance three. There are three automorphic type matchings of size two whose edges are at distance three. We denote them by $\Sigma_{8}(u_{0}u_{1}, v_{2}v_{3}), \Sigma_{9}(u_{0}u_{1}, v_{3}u_{3})$ and $\Sigma_{10}(u_{0}v_{0}, u_{2}v_{2})$. In a similar manner (as in case(i)), one can show that any other matching of size two whose edges are at distance three is automorphic to one of $\Sigma_{8},\Sigma_{9}$ and $\Sigma_{10}$. The matchings $\Sigma_{8}, \Sigma_{9}$ and $\Sigma_{10}$ are clearly pairwise non-automorphic. 
\end{enumerate}
 This concludes the proof of the lemma.
\end{proof}

\begin{lemm} \label{MS5.3}
The number of distinct automorphic type matchings of $P(5,1)$ of size three is $11$.
\end{lemm}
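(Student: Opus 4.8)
The plan is to classify all size-three matchings of $P(5,1)$ up to automorphism by a systematic case analysis on the number of spokes the matching contains, exactly as was done for the smaller cases. Since at most two edges can come from the outer cycle and at most two from the inner cycle, a matching $M_3$ of size three must contain at least $-1 + 3 - 2 = $ one spoke is not forced in general, but the spoke count is a natural invariant, so first I would split into cases according to whether $M_3$ contains zero, one, two, or three spokes. With three spokes and three edges total, all three edges are spokes; with two spokes the remaining edge lies on the outer or inner cycle; with one spoke the other two edges are distributed among the two cycles (either both on one cycle, or one on each); with zero spokes the three edges must split as two on one cycle and one on the other, since no cycle can host more than two matching edges.

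Within each spoke-count case I would further refine using the automorphism group $\mathrm{Aut}(P(5,1)) = \langle \rho_k, \delta_k, \gamma \rangle$ described in the excerpt. The idea is to fix a representative edge (say $u_0u_1$ or $u_0v_0$) using the transitivity provided by the rotations $\rho_k$ and the swap $\gamma$, and then enumerate the inequivalent positions of the remaining edges relative to it, using the distance invariant of Theorem~\ref{distance} (distance at most $n+1 = 3$ here) together with the reflections $\delta_k$ to collapse mirror-image configurations. For each candidate configuration I would exhibit a concrete representative $\Sigma_i$, argue that every other matching of that type is automorphic to one of the listed representatives, and verify pairwise non-automorphism by comparing automorphism-invariant data (number of spokes, multiset of pairwise edge-distances, whether the non-spoke edges lie on the same cycle).

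The main obstacle I expect is the bookkeeping in the one-spoke and zero-spoke cases, where there are several sub-configurations and it is easy either to miss a genuinely new automorphic type or to double-count two configurations that are in fact related by a reflection or by $\gamma$. To control this I would lean on the edge-distance multiset as the primary separating invariant and only invoke explicit automorphisms when two configurations share the same distance data. Once all representatives are in hand, I would tally them to confirm the count is $11$; the careful part is ensuring completeness of the enumeration rather than any single computation, so the argument's correctness rests on checking that every matching falls into exactly one of the listed automorphic classes.

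\begin{proof}[Proof sketch to be completed]
We classify the size-three matchings of $P(5,1)$ according to the number of spokes they contain and, within each case, according to the positions of the remaining edges relative to a fixed representative, using $\mathrm{Aut}(P(5,1))$ and Theorem~\ref{distance} to identify automorphic configurations. A case-by-case enumeration yields $11$ pairwise non-automorphic representatives.
\end{proof}
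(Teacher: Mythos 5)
Your strategy is exactly the paper's: classify $M_3$ by the number of spokes ($0,1,2,3$), use $\gamma$ and the rotations to fix representative edges, and check pairwise non-automorphism at the end. The paper's breakdown is $3+4+2+2=11$: with no spoke, two edges may be taken on the outer cycle as $u_0u_1, u_2u_3$, giving three types according to the inner edge ($v_0v_1$, $v_1v_2$, or $v_3v_4$); with one spoke $u_0v_0$, four types ($u_1u_2,u_3u_4$ both on one cycle, or one edge on each cycle in three inequivalent relative positions); with two spokes, two types (spokes consecutive or not); with three spokes, two types. However, your submission never performs this enumeration: the count $11$ is asserted in the final sentence rather than derived, and since the entire content of the lemma is the enumeration itself, this is a genuine gap, not merely an omitted routine verification. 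A plan that says ``enumerate inequivalent positions and tally'' is compatible with any answer; nothing in what you wrote certifies $11$ rather than $10$ or $12$.

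There is also a concrete flaw in your proposed mechanism for separating types. You suggest using the multiset of pairwise edge-distances (together with spoke count and cycle placement) as the primary invariant, invoking explicit automorphisms only on collisions. But this invariant already collides within the zero-spoke case: for $\Sigma_{11}(u_0u_1, v_0v_1, u_2u_3)$ and $\Sigma_{13}(u_0u_1, v_3v_4, u_2u_3)$, both matchings have no spokes, two outer edges and one inner edge, and the same distance multiset $\{2,2,3\}$ (in $\Sigma_{11}$, the pair $(v_0v_1, u_2u_3)$ is at distance $3$; in $\Sigma_{13}$, the pair $(u_0u_1, v_3v_4)$ is at distance $3$), yet they are not automorphic --- in $\Sigma_{11}$ the inner edge lies directly beneath an outer edge of the matching, a configuration preserved by every automorphism, while in $\Sigma_{13}$ it does not. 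So in a completed write-up you cannot lean on the distance data as the workhorse; you would need, as the paper implicitly does, direct inspection of how each representative sits relative to the spokes and to the other matching edges under $\langle \rho_k, \delta_k, \gamma\rangle$. In short: right skeleton, same as the paper's, but the proof is missing its body, and the one technical shortcut you propose for the hardest step demonstrably fails.
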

\begin{proof}
We classify all the automorphic type matchings of size three by looking at the number of spokes contained in these matchings.
\begin{enumerate}

\item[(i)] Let $M_{3}$ have no spoke. Out of three edges of $M_3$, two edges lie on outer (inner) cycle and the remaining one edge lies on inner (outer) cycle. Because of the automorphism $\gamma$, we may assume that two edges are lying on the outer cycle, and let they be $u_{0}u_{1}$ and $u_{2}u_{3}$. Therefore the possible automorphic type matchings for this case are $\Sigma_{11}(u_{0}u_{1}, v_{0}v_{1}, u_{2}u_{3}), \Sigma_{12}(u_{0}u_{1}, v_{1}v_{2}, u_{2}u_{3})$ and $\Sigma_{13}(u_{0}u_{1}, v_{3}v_{4}, u_{2}u_{3})$. Any other matching of size three which does not contain a spoke is automorphic to one of $\Sigma_{11}, \Sigma_{12}$ and $\Sigma_{13}$. Further, these matchings are not automorphic to each other.

\item[(ii)] Let $M_{3}$ have one spoke and let it be $u_0v_0$. If the other two edges of $M_3$ lie either on the outer cycle or on the inner cycle, then one of such matchings is $\Sigma_{14}(u_{0}v_{0}, u_{1}u_{2}, u_{3}u_{4})$. If one edge of $M_3$ lies on the outer cycle and one lies on the inner cycle, then following are the only possibilities: $\Sigma_{15}(u_{0}v_{0},u_{1}u_{2}, v_{1}v_{2}), \Sigma_{16}(u_{0}v_{0}, u_{1}u_{2}, v_{2}v_{3})$ and $\Sigma_{17}(u_{0}v_{0}, u_{2}u_{3}, v_{2}v_{3})$. Any other matching of size three containing only one spoke is automorphic to one of $\Sigma_{14}, \Sigma_{15}, \Sigma_{16}$ and $\Sigma_{17}$. Further, no two of these matchings are automorphic.

\item[(iii)] Let $M_{3}$ have two spokes. If the spokes are consecutive then there is only one possibility, \textit{viz.,}\linebreak[4] $\Sigma_{18}(u_{0}v_{0}, u_{1}v_{1}, u_{2}u_{3})$. If the spokes are not consecutive then there is also only one possibility, \textit{viz.}, $\Sigma_{19}(u_{0}v_{0}, v_{2}u_{2}, u_{3}u_{4})$. Any other matching of size three containing only two spokes is automorphic to $\Sigma_{18}$ or $\Sigma_{19}$, and that $\Sigma_{18}$ is not automorphic to $\Sigma_{19}$.

\item[(iv)] Let $M_{3}$ have three spokes. In this case, there are only two automorphic type matchings of size three and one of each such type of matchings is $\Sigma_{20}(u_{0}v_{0}, u_{1}v_{1}, u_{2}v_{2})$ and $\Sigma_{21}(u_{0}v_{0}, u_{1}v_{1}, u_{3}v_{3})$. Any other matching of size three containing only spokes is automorphic $\Sigma_{20}$ or $\Sigma_{21}$.
\end{enumerate}
This proves the lemma.
\end{proof}

\begin{lemm} \label{MS5.4}
The number of distinct automorphic type matchings of $P(5,1)$ of size four is $10$.
\end{lemm}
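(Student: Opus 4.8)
The plan is to classify the size-four matchings $M_4$ by the number $s$ of spokes they contain, exactly as in Lemmas~\ref{MS5.1}--\ref{MS5.3}, and within each value of $s$ to count configurations up to the action of $\text{Aut}(P(5,1)) = \langle \rho_k, \delta_k, \gamma\rangle$. Write $a$ and $b$ for the numbers of edges of $M_4$ lying on the outer and inner cycle respectively, so that $a+b+s=4$. Two bookkeeping constraints drive the whole argument: since the outer and inner cycles are odd cycles we have $a\le 2$ and $b\le 2$ (at most two independent edges fit on a $C_5$), and since the $a$ outer edges together with the $s$ spokes occupy $2a+s$ distinct $u$-vertices while the $b$ inner edges together with the spokes occupy $2b+s$ distinct $v$-vertices, we must also have $2a+s\le 5$ and $2b+s\le 5$.

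First I would solve these inequalities to list the feasible triples $(a,b,s)$. Up to the cycle-swap $\gamma$ (which interchanges $a$ and $b$), the only survivors are $(2,2,0)$, $(2,1,1)$, $(1,1,2)$, $(1,0,3)$ and $(0,0,4)$; in particular a configuration such as $(2,0,2)$ is ruled out since $2a+s=6>5$. This reduces the problem to five cases. Because any automorphism preserves $s$ and the multiset $\{a,b\}$, representatives from different cases can never be automorphic, so it suffices to count orbits within each case separately, and $\gamma$ lets us count the $(2,1,1)$ and $(1,0,3)$ families under the rotation--reflection subgroup $\langle \rho_k,\delta_k\rangle$ alone, each such orbit pairing off with its $\gamma$-image.

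Next I would count orbits case by case. For $s=4$ any four of the five spokes form a matching and all choices are related by a rotation, giving one type. For $s=3$ the single outer edge $u_iu_{i+1}$ forces the three spokes to occupy the remaining three indices, again a unique type. For $s=0$ the two outer edges are determined by the single uncovered $u$-vertex $u_c$ (the path $C_5-u_c$ has a unique perfect matching), and likewise the inner edges by the uncovered $v_d$; the configuration is thus encoded by $t=d-c\in\Zl_5$, and since rotation fixes $t$ while reflection and $\gamma$ send $t\mapsto -t$, the orbits $\{0\},\{1,4\},\{2,3\}$ give three types. For $s=1$ the two outer edges force the spoke onto the one free $u$-vertex, leaving three positions for the inner edge that collapse to two orbits under the reflection $\delta_0$ fixing that vertex.

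The main obstacle will be the case $s=2$, i.e. $(1,1,2)$, where both a reflection and $\gamma$ act nontrivially on the remaining freedom. Here I would split according to whether the two spokes are consecutive or not. In the non-consecutive case (say spokes at indices $0,2$) the free $u$-vertices $\{1,3,4\}$ and free $v$-vertices $\{1,3,4\}$ each span only a single edge, so $M_4$ is forced and yields one type. In the consecutive case (spokes at $0,1$) the free outer edge and free inner edge each have two positions, giving four configurations on which the stabiliser $\{\mathrm{id},\delta_3,\gamma,\gamma\delta_3\}$ of the spoke pair acts; a short orbit computation ($\delta_3$ swapping the two positions in each cycle and $\gamma$ interchanging the two cycles) shows these four split into exactly two orbits. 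Thus $s=2$ contributes $2+1=3$ types. Summing the contributions $1,1,3,2,3$ over $s=4,3,2,1,0$ gives exactly ten automorphic types, proving the lemma. The one point I would state explicitly is that the representatives exhibited in distinct cases are pairwise non-automorphic, which is immediate from the preservation of $s$ and $\{a,b\}$.
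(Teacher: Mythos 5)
Your proposal is correct and follows essentially the same route as the paper: a classification by the number of spokes $s$, yielding $3+2+3+1+1=10$ automorphic types over the five feasible cases $(a,b,s)=(2,2,0),(2,1,1),(1,1,2),(1,0,3),(0,0,4)$, exactly matching the paper's cases (i)--(v) and its representatives $\Sigma_{22}$--$\Sigma_{31}$. Your execution is somewhat more systematic --- the feasibility constraints $2a+s\le 5$, the uncovered-vertex invariant $t=d-c$ modulo $5$ (up to sign) in the spokeless case, and the explicit stabiliser-orbit computations (e.g.\ under $\{\mathrm{id},\delta_3,\gamma,\gamma\delta_3\}$ for consecutive spokes) replace the paper's direct listing and unverified non-automorphy assertions --- but the decomposition and the counts coincide case by case.
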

\begin{proof}
We classify the matchings of size four by considering the number of spokes contained in these matchings.
\begin{enumerate}
\item[(i)] Let $M_4$ have no spoke. Note that, at most two edges of $M_4$ may lie on the outer cycle and at most two edges may lie on the inner cycle. So, without loss of generality, let the edges $u_0u_1$ and $u_2u_3$ be lie on the outer cycle. Thus following are the only possibilities for the matchings of size four having no spoke: $\Sigma_{22}(u_0u_1, u_2u_3, v_0v_1, v_2v_3), \Sigma_{23}(u_0u_1, u_2u_3, v_0v_1, v_3v_4)$ and $\Sigma_{24}(u_0u_1, u_2u_3, v_1v_2, v_3v_4)$. Any other matching of size four which does not contain a spoke is automorphic to one of $\Sigma_{22}, \Sigma_{23}$ and $\Sigma_{24}$. Further, these matchings are pairwise non-automorphic. 

\item[(ii)] Let $M_4$ have one spoke and let it be $u_0v_0$. Out of the three remaining edges, two edges will lie on the outer (inner) cycle and one edge will lie on inner (outer) cycle. The two edges which lie on the outer cycle can be taken to be $u_1u_2$ and $u_3u_4$. Thus the possible automorphic type matchings of size four are $\Sigma_{25}(u_0v_0, u_1u_2, u_3u_4, v_1v_2)$ and $\Sigma_{26}(u_0v_0, u_1u_2, u_3u_4, v_3v_2)$. Any other matching of size four having only one spoke is automorphic to either $\Sigma_{25}$ or $\Sigma_{26}$, and that $\Sigma_{25}$ is not automorphic to $\Sigma_{26}$.

\item[(iii)] Let $M_4$ have two spokes. If spokes are at distance two then let they be $u_0v_0$ and $u_1v_1$. Further, out of remaining two edges, only one edge may lie on outer cycle and other may lie on inner inner. Let $u_2u_3$ lies on outer cycle. Then, the possible automorphic type matchings are $\Sigma_{27}(u_{0}v_{0}, u_{1}v_{1}, u_{2}u_{3}, v_{2}v_{3})$ and $\Sigma_{28}(u_{0}v_{0}, u_{1}v_{1}, u_{2}u_{3}, v_{3}v_{4})$. If the two spokes are at distance three then let they be $u_0v_0$ and $u_2v_2$. The only possibility for such matching is $\Sigma_{29}(u_{0}v_{0}, u_{2}v_{2}, u_{3}u_{4}, v_{3}v_{4})$. Any other matching of size four with only two spokes is automorphic to one of $\Sigma_{27}, \Sigma_{28}$ and $\Sigma_{29}$. Also these matchings are pairwise non-automorphic.

\item[(iv)] Let $M_4$ have three spokes. If one of the spokes is at distance three from the other two spokes, then no edge from the outer or inner cycle can be contained in $M_4$. Therefore the only possibility is $\Sigma_{30}(u_{0}v_{0}, u_{1}v_{1}, u_{2}v_{2}, u_{3}u_{4})$.

\item[(v)] Let $M_4$ have four spokes. One of such matchings is $\Sigma_{31}(u_{0}v_{0}, u_{1}v_{1}, u_{2}v_{2}, u_{3}v_{3})$. Any other matching for this case is automorphic to $\Sigma_{31}$.
\end{enumerate}
This completes the proof of the lemma.
\end{proof}

\begin{lemm} \label{MS5.5}
The number of distinct automorphic type matchings of $P(5,1)$ of size five is three.
\end{lemm}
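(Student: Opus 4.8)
The plan is to first observe that a matching of size five in $P(5,1)$ saturates all $10$ vertices, so it is a perfect matching, and then to classify perfect matchings according to the number $s$ of spokes they contain. The first key point is that $s$ is an automorphism invariant: each generator $\rho_k, \delta_k, \gamma$ of $\text{Aut}(P(5,1))$ maps spokes to spokes and cycle edges to cycle edges, so the same holds for every automorphism. Hence two perfect matchings with different numbers of spokes can never be automorphic, and it suffices to determine the feasible values of $s$ and count the automorphic types realising each.

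Next I would restrict the possible values of $s$. If a perfect matching uses $s$ spokes, then the $5-s$ outer vertices not incident to a chosen spoke must be matched among themselves using only outer-cycle edges, and likewise the $5-s$ uncovered inner vertices must be matched using only inner-cycle edges. This forces $5-s$ to be even, so $s$ is odd; thus $s \in \{1,3,5\}$, while $s \in \{0,2,4\}$ would leave an odd number of outer vertices that cannot be saturated inside a single $5$-cycle.

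Then I would treat the three surviving cases. For $s=5$ the matching consists of all five spokes. For $s=3$, deleting the three spoke indices from the outer $C_5$ leaves exactly two outer vertices, and these can be joined by an outer edge only if they are consecutive; up to the rotations $\rho_k$ the two non-spoke indices may be taken to be $0$ and $1$, yielding the representative $\{u_0u_1, v_0v_1, u_2v_2, u_3v_3, u_4v_4\}$. For $s=1$, with the spoke taken to be $u_0v_0$, the four remaining outer vertices form the path $u_1u_2u_3u_4$, whose only perfect matching is $\{u_1u_2, u_3u_4\}$, and symmetrically on the inner cycle, giving the representative $\{u_0v_0, u_1u_2, u_3u_4, v_1v_2, v_3v_4\}$.

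Finally I would check that each case contributes exactly one automorphic type: in every case the position of the spokes can be normalised by a rotation $\rho_k$ (with any residual symmetry absorbed by $\delta_k$ or $\gamma$), while the cycle edges are then forced as above. Combined with the invariance of $s$, this yields exactly three distinct automorphic types. I expect the main obstacle to be the verification in the $s=1$ and $s=3$ cases that, once the spoke positions are fixed, the remaining cycle edges admit no genuine freedom---that is, that the arcs left after deleting the spoke-incident vertices have unique perfect matchings---so that no additional automorphic type can slip in.
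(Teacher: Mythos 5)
Your proof is correct and follows essentially the same route as the paper: both classify size-five matchings by the number of spokes, rule out even spoke counts, and exhibit one automorphic type for each of $s=1,3,5$. Your perfect-matching observation with the parity argument ($5-s$ must be even) gives a cleaner justification of the paper's assertions that $M_5$ has at least one spoke and cannot have two or four, and your remark that spoke count is an automorphism invariant makes explicit why the three types are pairwise distinct, which the paper leaves implicit.
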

\begin{proof}
It is clear that a matching $M_5$ of size five must have at least one spoke. Further, if $M_5$ has exactly two spokes, then only three vertices are unsaturated in the outer cycle as well as in the inner cycle. However, we must have at least two edges in $M_5$ either from the outer cycle or from the inner cycle. Therefore $M_5$ cannot have exactly two spokes. Similarly, $M_5$ cannot have four spokes. Thus following are the only possible cases.
\begin{enumerate}
\item[(i)] Let $M_{5}$ have one spoke. There is only one such automorphism type $M_5$. We denote it by \linebreak[4] $\Sigma_{32}(u_{0}v_{0}, u_{1}u_{2}, v_{1}v_{2}, u_{3}u_{4}, v_{3}v_{4})$.

\item[(ii)] Let $M_{5}$ have three spokes. There is only one automorphism type of $M_5$. We denote it by $\Sigma_{33}(u_{0}v_{0}, u_{1}v_{1}, u_{2}v_{2}, u_{3}u_{4}, v_{3}v_{4})$.

\item[(iii)] Let $M_{5}$ have five spokes. There is also only one automorphism type of $M_5$. We denote it by $\Sigma_{34}(u_{0}v_{0}, u_{1}v_{1}, u_{2}v_{2}, u_{3}v_{3}, u_{4}v_{4})$.
\end{enumerate}
This completes the proof of lemma.
\end{proof}
The matchings obtained in the preceding lemmas along with $\Sigma_{0}$ give us $35$ different automorphic type matchings of $P(5,1)$ \textit{viz.,} $\Sigma_{0}, \Sigma_{1},\ldots,\Sigma_{34}$. However, some of these $35$ matchings may be switching isomorphic to each other. We have the following observations.

\begin{itemize}
\item In $\Sigma_{7} $, by resigning at $u_{0}, u_{1}$; we get a matching automorphic to $\Sigma_{5}$. Thus $\Sigma_{7} \sim \Sigma_{5}$.
\item In $\Sigma_{11}$, by resigning at $u_{1}, u_{2}, v_{1}, v_{2}$; we get a matching automorphic to $\Sigma_{1}$. Thus $\Sigma_{11} \sim \Sigma_{1}$.
\item In $\Sigma_{12}$, by resigning at $u_{1}, u_{2}, v_{1}$; we get a matching automorphic to $\Sigma_{6}$. Thus $\Sigma_{12} \sim \Sigma_{6}$.
\item In $\Sigma_{13}$, by resigning at $u_{1}, v_{1}, u_{2}, v_{2}, v_{3}$; we get a matching automorphic to $\Sigma_{9}$. Thus $\Sigma_{13} \sim \Sigma_{9}$.
\item In $\Sigma_{14}$, by resigning at $u_{0}, u_{1}, u_{4}$; we get a matching automorphic to $\Sigma_{10}$. Thus $\Sigma_{14} \sim \Sigma_{10}$.
\item In $\Sigma_{15}$, by resigning at $u_{0}, u_{1}, v_{1}$; we get a matching automorphic to $\Sigma_{4}$. Thus $\Sigma_{15} \sim \Sigma_{4}$.
\item In $\Sigma_{17}$, by resigning at $u_{0}, u_{1}, u_{2}, v_{1}, v_{2}$; we get a matching automorphic to $\Sigma_{4}$. Thus $\Sigma_{17} \sim \Sigma_{4}$.
\item In $\Sigma_{18}$, by resigning at $u_{1}, u_{2}, u_{0}$; we get a matching automorphic to $\Sigma_{9}$. Thus $\Sigma_{18} \sim \Sigma_{9}$.
\item In $\Sigma_{20}$, by resigning at $u_{0}, u_{1}, u_{2}$; we get a matching automorphic to $\Sigma_{5}$. Thus $\Sigma_{20} \sim \Sigma_{5}$.
\item In $\Sigma_{21}$, by resigning at $u_{0}, u_{1}, u_{2},u_{3}, u_{4}$; we get a matching automorphic to $\Sigma_{10}$. Thus $\Sigma_{21} \sim \Sigma_{10}$.
\item In $\Sigma_{22}$, by resigning at $u_{1}, v_{1}, u_{2}, v_{2}$; we get a matching automorphic to $\Sigma_{0}$. Thus $\Sigma_{22} \sim \Sigma_{0}$.
\item In $\Sigma_{23}$, by resigning at $u_{1}, u_{2}, v_{1}, v_{2}, v_{3}$; we get a matching automorphic to $\Sigma_{2}$. Thus $\Sigma_{23} \sim \Sigma_{2}$.
\item In $\Sigma_{24}$, by resigning at $u_{1}, u_{2}, v_{2}, v_{3}$; we get a matching automorphic to $\Sigma_{6}$. Thus $\Sigma_{24} \sim \Sigma_{6}$.
\item In $\Sigma_{25}$, by resigning at $u_{0}, u_{1}, u_{4}, v_{0}, v_{1}, v_{4}$; we get a matching automorphic to $\Sigma_{6}$. Thus $\Sigma_{25} \sim \Sigma_{6}$.
\item In $\Sigma_{26}$, by resigning at $u_{0}, u_{1}, u_{4}$; we get a matching automorphic to $\Sigma_{19}$. Thus $\Sigma_{26} \sim \Sigma_{19}$.
\item In $\Sigma_{27}$, by resigning at $u_{0}, u_{1}, u_{2}, v_{2}$; we get a matching automorphic to $\Sigma_{8}$. Thus $\Sigma_{27} \sim \Sigma_{8}$.
\item In $\Sigma_{28}$, by resigning at $u_{0}, u_{1}, u_{2}$; we get a matching automorphic to $\Sigma_{16}$. Thus $\Sigma_{28} \sim \Sigma_{16}$.
\item In $\Sigma_{29}$, by resigning at $u_{3}, v_{2}, v_{3}$; we get a matching automorphic to $\Sigma_{16}$. Thus $\Sigma_{29} \sim \Sigma_{16}$.
\item In $\Sigma_{30}$, by resigning at $u_{0}, u_{1}, u_{2}, u_{4}$; we get a matching automorphic to $\Sigma_{6}$. Thus $\Sigma_{30} \sim \Sigma_{6}$.
\item In $\Sigma_{31}$, by resigning at $u_{0}, u_{1}, u_{2}, u_{3}, u_{4}$; we get a matching automorphic to $\Sigma_{2}$. Thus $\Sigma_{31} \sim \Sigma_{2}$.
\item In $\Sigma_{32}$, by resigning at $v_{2}, u_{2}, u_{3}, v_{3}$; we get a matching automorphic to $\Sigma_{2}$. Thus $\Sigma_{32} \sim \Sigma_{2}$.
\item In $\Sigma_{33}$, by resigning at $u_{0}, u_{1}, u_{2}, u_{3}, v_{3}$; we get a matching automorphic to $\Sigma_{8}$. Thus $\Sigma_{33} \sim \Sigma_{8}$.
\item In $\Sigma_{34}$, by resigning at $u_{0}, u_{1}, u_{2}, u_{3}, u_{4}$; we get a matching automorphic to $\Sigma_{0}$. Thus $\Sigma_{34} \sim \Sigma_{0}$.
\end{itemize}
Thus we are left with $12$ different matchings \textit{viz.,} $\Sigma_{0}, \Sigma_{1}, \Sigma_{2}, \Sigma_{3}, \Sigma_{4}, \Sigma_{5}, \Sigma_{6}, \Sigma_{8}, \Sigma_{9}, \Sigma_{10}, \Sigma_{16}$, and $\Sigma_{19}$. The corresponding signified graphs of these $12$ matchings are shown in Figure~\ref{DS5}, where the label of the vertices correspond to that of Figure~\ref{P5}.

\begin{figure}[ht]
\begin{subfigure}{0.24\textwidth}
\begin{tikzpicture}[scale=0.4]
\node[vertex] (v1) at (12,6) {};\node[vertex] (v2) at (15.5,3.5) {};\node[vertex] (v3) at (14,0) {};\node[vertex] (v4) at (10,0) {};\node[vertex] (v5) at (8.5,3.5) {};
\node[vertex] (v6) at (12,4.5) {};\node[vertex] (v7) at (13.8,3.25) {};\node[vertex] (v8) at (13.0,1.4) {};\node[vertex] (v9) at (11,1.4) {};\node[vertex] (v10) at (10.3,3.25) {};
\node [below] at (12,0) {\tiny{$\Sigma_{0}$}};

\foreach \from/\to in {v1/v2,v2/v3,v3/v4,v4/v5,v5/v1,v6/v7,v7/v8,v8/v9,v9/v10,v10/v6,v1/v6,v2/v7,v3/v8,v4/v9,v5/v10}\draw (\from) -- (\to);

\end{tikzpicture}
\end{subfigure}
\hfill
\begin{subfigure}{0.24\textwidth}
\begin{tikzpicture}[scale=0.4]
\node[vertex] (v1) at (12,6) {};\node[vertex] (v2) at (15.5,3.5) {};\node[vertex] (v3) at (14,0) {};\node[vertex] (v4) at (10,0) {};\node[vertex] (v5) at (8.5,3.5) {};
\node[vertex] (v6) at (12,4.5) {};\node[vertex] (v7) at (13.8,3.25) {};\node[vertex] (v8) at (13.0,1.4) {};\node[vertex] (v9) at (11,1.4) {};\node[vertex] (v10) at (10.3,3.25) {};
\node [below] at (12,0) {\tiny{$\Sigma_{1}$}};

\foreach \from/\to in {v2/v3,v3/v4,v4/v5,v5/v1,v6/v7,v7/v8,v8/v9,v9/v10,v10/v6,v1/v6,v2/v7,v3/v8,v4/v9,v5/v10} \draw (\from) -- (\to);
\draw [dotted] (v1) -- (v2);

\end{tikzpicture}
\end{subfigure}
\hfill
\begin{subfigure}{0.24\textwidth}
\begin{tikzpicture}[scale=0.4]
\node[vertex] (v1) at (12,6) {};\node[vertex] (v2) at (15.5,3.5) {};\node[vertex] (v3) at (14,0) {};\node[vertex] (v4) at (10,0) {};\node[vertex] (v5) at (8.5,3.5) {};
\node[vertex] (v6) at (12,4.5) {};\node[vertex] (v7) at (13.8,3.25) {};\node[vertex] (v8) at (13.0,1.4) {};\node[vertex] (v9) at (11,1.4) {};\node[vertex] (v10) at (10.3,3.25) {};
\node [below] at (12,0) {\tiny{$\Sigma_{2}$}};

\foreach \from/\to in {v1/v2,v2/v3,v3/v4,v4/v5,v5/v1,v6/v7,v7/v8,v8/v9,v9/v10,v10/v6,v2/v7,v3/v8,v4/v9,v5/v10} \draw (\from) -- (\to);
\draw [dotted] (v1) -- (v6);

\end{tikzpicture}
\end{subfigure}
\hfill
\begin{subfigure}{0.24\textwidth}
\begin{tikzpicture}[scale=0.4]
\node[vertex] (v1) at (12,6) {};\node[vertex] (v2) at (15.5,3.5) {};\node[vertex] (v3) at (14,0) {};\node[vertex] (v4) at (10,0) {};\node[vertex] (v5) at (8.5,3.5) {};
\node[vertex] (v6) at (12,4.5) {};\node[vertex] (v7) at (13.8,3.25) {};\node[vertex] (v8) at (13.0,1.4) {};\node[vertex] (v9) at (11,1.4) {};\node[vertex] (v10) at (10.3,3.25) {};
\node [below] at (12,0) {\tiny{$\Sigma_{3}$}};

\foreach \from/\to in {v2/v3,v3/v4,v4/v5,v5/v1,v7/v8,v8/v9,v9/v10,v10/v6,v1/v6,v2/v7,v3/v8,v4/v9,v5/v10} \draw (\from) -- (\to);
\draw [dotted] (v1) -- (v2);\draw [dotted] (v6) -- (v7);

\end{tikzpicture}
\end{subfigure}
\hfill
\begin{subfigure}{0.24\textwidth}
\begin{tikzpicture}[scale=0.4]
\node[vertex] (v1) at (12,6) {};\node[vertex] (v2) at (15.5,3.5) {};\node[vertex] (v3) at (14,0) {};\node[vertex] (v4) at (10,0) {};\node[vertex] (v5) at (8.5,3.5) {};
\node[vertex] (v6) at (12,4.5) {};\node[vertex] (v7) at (13.8,3.25) {};\node[vertex] (v8) at (13.0,1.4) {};\node[vertex] (v9) at (11,1.4) {};\node[vertex] (v10) at (10.3,3.25) {};
\node [below] at (12,0) {\tiny{$\Sigma_{4}$}};

\foreach \from/\to in {v2/v3,v3/v4,v4/v5,v5/v1,v6/v7,v8/v9,v9/v10,v10/v6,v1/v6,v2/v7,v3/v8,v4/v9,v5/v10} \draw (\from) -- (\to);
\draw [dotted] (v1) -- (v2);\draw [dotted] (v7) -- (v8);

\end{tikzpicture}
\end{subfigure}
\hfill
\begin{subfigure}{0.24\textwidth}
\begin{tikzpicture}[scale=0.4]
\node[vertex] (v1) at (12,6) {};\node[vertex] (v2) at (15.5,3.5) {};\node[vertex] (v3) at (14,0) {};\node[vertex] (v4) at (10,0) {};\node[vertex] (v5) at (8.5,3.5) {};
\node[vertex] (v6) at (12,4.5) {};\node[vertex] (v7) at (13.8,3.25) {};\node[vertex] (v8) at (13.0,1.4) {};\node[vertex] (v9) at (11,1.4) {};\node[vertex] (v10) at (10.3,3.25) {};
\node [below] at (12,0) {\tiny{$\Sigma_{5}$}};

\foreach \from/\to in {v2/v3,v4/v5,v5/v1,v6/v7,v7/v8,v8/v9,v9/v10,v10/v6,v1/v6,v2/v7,v3/v8,v4/v9,v5/v10} \draw (\from) -- (\to);
\draw [dotted] (v1) -- (v2);\draw [dotted] (v3) -- (v4);

\end{tikzpicture}
\end{subfigure}
\hfill
\begin{subfigure}{0.24\textwidth}
\begin{tikzpicture}[scale=0.4]
\node[vertex] (v1) at (12,6) {};\node[vertex] (v2) at (15.5,3.5) {};\node[vertex] (v3) at (14,0) {};\node[vertex] (v4) at (10,0) {};\node[vertex] (v5) at (8.5,3.5) {};
\node[vertex] (v6) at (12,4.5) {};\node[vertex] (v7) at (13.8,3.25) {};\node[vertex] (v8) at (13.0,1.4) {};\node[vertex] (v9) at (11,1.4) {};\node[vertex] (v10) at (10.3,3.25) {};
\node [below] at (12,0) {\tiny{$\Sigma_{6}$}};

\foreach \from/\to in {v2/v3,v3/v4,v4/v5,v5/v1,v6/v7,v7/v8,v8/v9,v9/v10,v10/v6,v1/v6,v2/v7,v4/v9,v5/v10} \draw (\from) -- (\to);
\draw [dotted] (v1) -- (v2);\draw [dotted] (v3) -- (v8);

\end{tikzpicture}
\end{subfigure}
\hfill
\begin{subfigure}{0.24\textwidth}
\begin{tikzpicture}[scale=0.4]
\node[vertex] (v1) at (12,6) {};\node[vertex] (v2) at (15.5,3.5) {};\node[vertex] (v3) at (14,0) {};\node[vertex] (v4) at (10,0) {};\node[vertex] (v5) at (8.5,3.5) {};
\node[vertex] (v6) at (12,4.5) {};\node[vertex] (v7) at (13.8,3.25) {};\node[vertex] (v8) at (13.0,1.4) {};\node[vertex] (v9) at (11,1.4) {};\node[vertex] (v10) at (10.3,3.25) {};
\node [below] at (12,0) {\tiny{$\Sigma_{8}$}};

\foreach \from/\to in {v2/v3,v3/v4,v4/v5,v5/v1,v6/v7,v7/v8,v9/v10,v10/v6,v1/v6,v2/v7,v3/v8,v4/v9,v5/v10} \draw (\from) -- (\to);
\draw [dotted] (v1) -- (v2);\draw [dotted] (v8) -- (v9);

\end{tikzpicture}
\end{subfigure}
\hfill
\begin{subfigure}{0.24\textwidth}
\begin{tikzpicture}[scale=0.4]
\node[vertex] (v1) at (12,6) {};\node[vertex] (v2) at (15.5,3.5) {};\node[vertex] (v3) at (14,0) {};\node[vertex] (v4) at (10,0) {};\node[vertex] (v5) at (8.5,3.5) {};
\node[vertex] (v6) at (12,4.5) {};\node[vertex] (v7) at (13.8,3.25) {};\node[vertex] (v8) at (13.0,1.4) {};\node[vertex] (v9) at (11,1.4) {};\node[vertex] (v10) at (10.3,3.25) {};
\node [below] at (12,0) {\tiny{$\Sigma_{9}$}};

\foreach \from/\to in {v2/v3,v3/v4,v4/v5,v5/v1,v6/v7,v7/v8,v8/v9,v9/v10,v10/v6,v1/v6,v2/v7,v3/v8,v5/v10} \draw (\from) -- (\to);
\draw [dotted] (v1) -- (v2);\draw [dotted] (v4) -- (v9);

\end{tikzpicture}
\end{subfigure}
\hfill
\begin{subfigure}{0.24\textwidth}
\begin{tikzpicture}[scale=0.4]
\node[vertex] (v1) at (12,6) {};\node[vertex] (v2) at (15.5,3.5) {};\node[vertex] (v3) at (14,0) {};\node[vertex] (v4) at (10,0) {};\node[vertex] (v5) at (8.5,3.5) {};
\node[vertex] (v6) at (12,4.5) {};\node[vertex] (v7) at (13.8,3.25) {};\node[vertex] (v8) at (13.0,1.4) {};\node[vertex] (v9) at (11,1.4) {};\node[vertex] (v10) at (10.3,3.25) {};
\node [below] at (12,0) {\tiny{$\Sigma_{10}$}};

\foreach \from/\to in {v1/v2,v2/v3,v3/v4,v4/v5,v5/v1,v6/v7,v7/v8,v8/v9,v9/v10,v10/v6,v2/v7,v4/v9,v5/v10} \draw (\from) -- (\to);
\draw [dotted] (v1) -- (v6);\draw [dotted] (v3) -- (v8);

\end{tikzpicture}
\end{subfigure}
\hfill
\begin{subfigure}{0.24\textwidth}
\begin{tikzpicture}[scale=0.4]
\node[vertex] (v1) at (12,6) {};\node[vertex] (v2) at (15.5,3.5) {};\node[vertex] (v3) at (14,0) {};\node[vertex] (v4) at (10,0) {};\node[vertex] (v5) at (8.5,3.5) {};
\node[vertex] (v6) at (12,4.5) {};\node[vertex] (v7) at (13.8,3.25) {};\node[vertex] (v8) at (13.0,1.4) {};\node[vertex] (v9) at (11,1.4) {};\node[vertex] (v10) at (10.3,3.25) {};
\node [below] at (12,0) {\tiny{$\Sigma_{16}$}};

\foreach \from/\to in {v1/v2,v7/v8,v3/v4,v4/v5,v5/v1,v6/v7,v4/v9,v9/v10,v10/v6,v2/v7,v3/v8,v5/v10} \draw (\from) -- (\to);
\draw [dotted] (v1) -- (v6);\draw [dotted] (v2) -- (v3);\draw [dotted] (v8) -- (v9);

\end{tikzpicture}
\end{subfigure}
\hfill
\begin{subfigure}{0.24\textwidth}
\begin{tikzpicture}[scale=0.4]
\node[vertex] (v1) at (12,6) {};\node[vertex] (v2) at (15.5,3.5) {};\node[vertex] (v3) at (14,0) {};\node[vertex] (v4) at (10,0) {};\node[vertex] (v5) at (8.5,3.5) {};
\node[vertex] (v6) at (12,4.5) {};\node[vertex] (v7) at (13.8,3.25) {};\node[vertex] (v8) at (13.0,1.4) {};\node[vertex] (v9) at (11,1.4) {};\node[vertex] (v10) at (10.3,3.25) {};
\node [below] at (12,0) {\tiny{$\Sigma_{19}$}};

\foreach \from/\to in {v2/v3,v3/v4,v10/v5,v5/v1,v6/v7,v7/v8,v8/v9,v9/v10,v10/v6,v1/v2,v2/v7,v4/v9} \draw (\from) -- (\to);
\draw [dotted] (v1) -- (v6);\draw [dotted] (v3) -- (v8);\draw [dotted] (v4) -- (v5);

\end{tikzpicture}
\end{subfigure}
\caption{Twelve signed $P(5,1)$.}\label{DS5}
\end{figure}
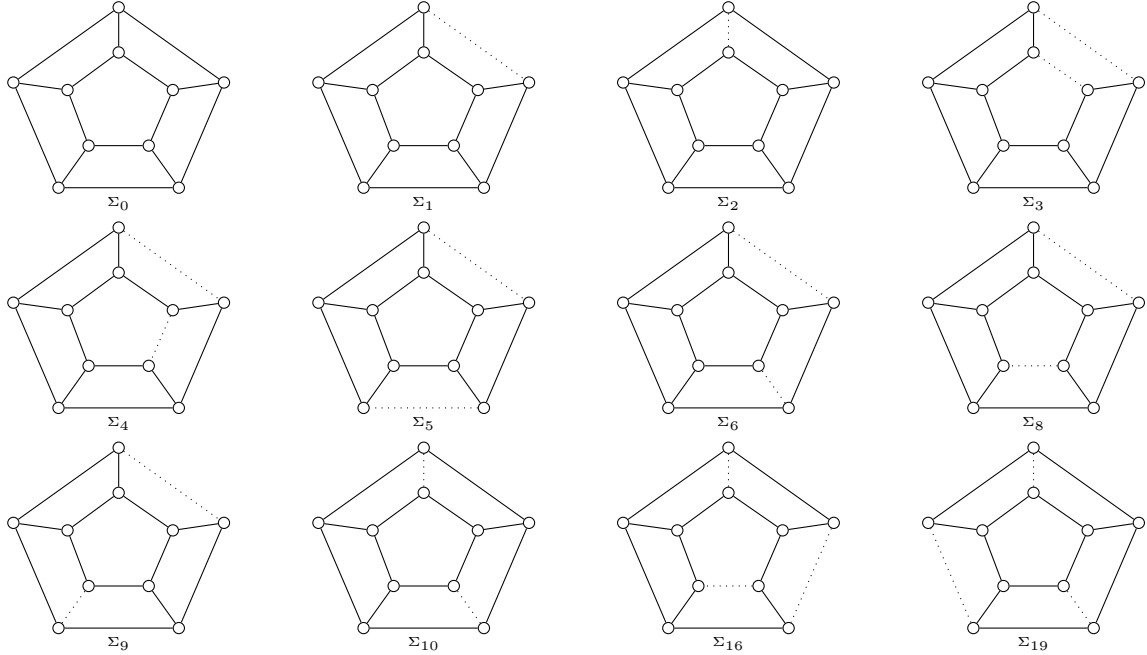

\begin{theorem}
There are exactly twelve signed $P(5,1)$ upto switching isomorphism. 
\end{theorem}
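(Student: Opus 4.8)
The plan is to combine the classification already carried out with a switching-isomorphism invariant that separates the twelve surviving matchings. First I would record the upper bound. By Theorem~\ref{matching} every signature of $P(5,1)$ is switching equivalent to one of its matchings, and by Lemmas~\ref{MS5.1}--\ref{MS5.5} together with the empty matching $\Sigma_0$ every such matching is automorphic to exactly one of $\Sigma_0,\Sigma_1,\dots,\Sigma_{34}$. The list of explicit resignings displayed immediately before the theorem shows that each of the remaining twenty-three matchings is switching isomorphic to one of the twelve matchings $\Sigma_0,\Sigma_1,\Sigma_2,\Sigma_3,\Sigma_4,\Sigma_5,\Sigma_6,\Sigma_8,\Sigma_9,\Sigma_{10},\Sigma_{16},\Sigma_{19}$. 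Hence every signed $P(5,1)$ is switching isomorphic to one of these twelve, so there are \emph{at most} twelve classes.

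Second, I would argue that these twelve are pairwise non-switching-isomorphic, which is the substantive part. The key point is that the number of negative cycles of each fixed length is invariant under switching isomorphism: switching preserves the sign product of every cycle (Theorem~\ref{Signature}), while any automorphism of $P(5,1)$ sends cycles to cycles of the same length and preserves their sign products. By Theorem~\ref{cycle} the only cycle lengths in $P(5,1)$ are $4,6,8,10$, with five cycles of each length, together with the two $5$-cycles. Thus to each signed graph I would attach the tuple
\[
\bigl(n_4,\;n_5,\;n_6,\;n_8,\;n_{10}\bigr),
\]
where $n_j$ denotes the number of negative $j$-cycles; this tuple is a switching-isomorphism invariant.

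Third, I would compute this tuple for each of the twelve signatures of Figure~\ref{DS5}, exactly as was done for $P(3,1)$ in Table~\ref{table1}. For each $4$-cycle $u_iv_iv_{i+1}u_{i+1}$ one checks the parity of the number of negative edges among $\{u_iu_{i+1},\,v_iv_{i+1},\,u_iv_i,\,u_{i+1}v_{i+1}\}$; the $6$-, $8$- and $10$-cycles are read off from the explicit form $u_iv_iv_{i+1}\cdots v_{i+(l-1)}u_{i+(l-1)}\cdots u_{i+1}u_i$ given in the proof of Theorem~\ref{cycle}; and a $5$-cycle is negative precisely when its governing cycle (inner or outer) carries an odd number of negative edges. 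Assembling these counts into a table, I would verify that the twelve tuples are pairwise distinct. Once the table exhibits distinct rows, the invariance noted above together with Theorem~\ref{Signature} forces the twelve signatures into twelve different switching-isomorphism classes, and combined with the upper bound the count is exactly twelve.

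The main obstacle I anticipate is neither conceptual nor in the upper bound---those follow mechanically from the preceding lemmas and the tabulated resignings---but rather in the bookkeeping of the lower bound: one must compute $n_4,n_5,n_6,n_8,n_{10}$ correctly for all twelve signatures, and it is conceivable that the coarse length-tuple fails to separate some pair (for instance two signatures agreeing on $n_4$ and $n_6$). Should that happen, I would refine the invariant---for example by recording the incidence pattern of the negative $4$-cycles with the spokes, or how each negative short cycle nests inside the negative longer cycles---to obtain a finer, still automorphism-invariant, separator. I expect, however, that the five length-counts already distinguish all twelve, just as the two counts $(n_3,n_4)$ sufficed in the $P(3,1)$ case.
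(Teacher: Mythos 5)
Your proposal is correct and follows essentially the same route as the paper: the upper bound comes from Theorem~\ref{matching}, Lemmas~\ref{MS5.1}--\ref{MS5.5} and the listed resignings, and the lower bound from tabulating negative cycle counts and invoking Theorem~\ref{Signature}. The only difference is cosmetic: the paper's Table~\ref{table2} records just $(n_4, n_5, n_6)$, which already separates all twelve classes, so your fuller tuple $(n_4,n_5,n_6,n_8,n_{10})$ and the contemplated refinements are not needed.
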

\begin{proof}
The number of negative $4$-cycles, negative $5$-cycles and negative $6$-cycles for the $12$ signed $P(5,1)$ in Figure~\ref{DS5} are given in Table~\ref{table2}. 

\begin{table}[h]
\begin{center}
\begin{tabular}{ |c|c|c|c|c|c|c|c|c|c|c|c|c| } 
 \hline
  & $\Sigma_{0}$ & $\Sigma_{1}$ & $\Sigma_{2}$ & $\Sigma_{3}$ & $\Sigma_{4}$ & $\Sigma_{5}$ & $\Sigma_{6}$ & $\Sigma_{8}$ & $\Sigma_{9}$ & $\Sigma_{10}$ & $\Sigma_{16}$ & $\Sigma_{19}$\\ 
 \hline
number of negative $C_{4}$   & 0 & 1 & 2 & 0 & 2 & 2 & 3 & 2 & 3 & 4 & 4 & 5 \\
 \hline 
number of negative $C_{5}$   & 0 & 1 & 0 & 2 & 2 & 0 & 1 & 2 & 1 & 0 & 2 & 1 \\ 
 \hline 
number of negative $C_{6}$   & 0 & 2 & 2 & 0 & 2 & 4 & 2 & 4 & 4 & 2 & 2 & 0 \\ 
 \hline
 
\end{tabular}
\end{center}
\caption{Number of negative $4$-cycles, $5$-cycles, $6$-cycles of some signed $P(5,1)$.}
\label{table2}
\end{table}
From Theorem~\ref{Signature} and Table~\ref{table2}, it is easy to see that the twelve signed $P(5,1)$, shown in Figure~\ref{DS5}, are non-switching isomorphic. This concludes the proof of the theorem. 
\end{proof}

\section{Signings on $P(7,1)$}\label{P(7,1)}

The graph $P(7,1)$ is shown in Figure~\ref{P7}.
\begin{figure}[ht]
\centering
\begin{tikzpicture}[scale=0.5]
\node[vertex] (v1) at (11.9,7) {};\node[vertex] (v2) at (14.9,5.3) {};\node[vertex] (v3) at (15.3,2.5) {};\node[vertex] (v4) at (13.5,0) {};\node[vertex] (v5) at (10.3,0) {};
\node[vertex] (v6) at (8.7,2.5) {};\node[vertex] (v7) at (9.1,5.3) {};\node[vertex] (v8) at (12.0,5.4) {};\node[vertex] (v9) at (13.5,4.4) {};\node[vertex] (v10) at (13.8,2.8){};
\node[vertex] (v11) at (12.8,1.4) {};\node[vertex] (v12) at (11.2,1.4) {};\node[vertex] (v13) at (10.3,2.8) {};\node[vertex] (v14) at (10.5,4.4) {};

\node [above] at (v1) {$u_{0}$};\node [right] at (v2) {$u_{1}$};\node [right] at (v3) {$u_{2}$};\node [below] at (v4) {$u_{3}$};\node [below] at (v5) {$u_{4}$};
\node [left] at (v6) {$u_{5}$};\node [left] at (v7) {$u_{6}$};\node [below] at (v8) {$v_{0}$};\node at (13.10,4.3) {$v_{1}$};\node [left] at (v10) {$v_{2}$};
\node at (12.6,1.85) {$v_{3}$};\node at (11.5,1.75) {$v_{4}$};\node [right] at (v13) {$v_{5}$};\node at (10.9,4.1) {$v_{6}$};

\foreach \from/\to in {v1/v2,v2/v3,v3/v4,v4/v5,v5/v6,v6/v7,v7/v1,v8/v9,v9/v10,v10/v11,v11/v12,v12/v13,v13/v14,v14/v8,v1/v8,v2/v9,v3/v10,v4/v11,v5/v12,v6/v13,v7/v14} \draw (\from) -- (\to);
\end{tikzpicture}
\caption{The graph $P(7,1)$.}\label{P7}
\end{figure}
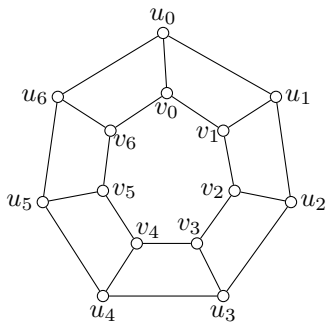
From Theorem \ref{cycle}, we see that the number of 4-cycles, 6-cycles, 7-cycles and 8-cycles in $P(7, 1)$ are $7, 7, 2$ and $7$, respectively. We now find the non-isomorphic matchings of sizes $ 0, 1, 2, 3, 4, 5, 6$ and $7$, upto switching isomorphism. We have the following lemmas to settle the possible cases of matchings of different sizes.

\begin{lemm}\label{forbidden}
Consider the subsets $\sigma_{1} = \{u_{0}u_{1}, v_{0}v_{1}, v_{2}u_{2}\}, \sigma_{2} = \{u_{0}u_{1}, v_{1}v_{2}, u_{2}u_{3}\}, \sigma_{3} = \{u_{0}u_{1}, v_{1}v_{2}, v_{4}v_{5}\}$,
$\sigma_{4} = \{u_{0}u_{1}, v_{0}v_{1}, u_{3}u_{4}\},\sigma_{5} = \{u_{0}u_{1}, v_{0}v_{6}, u_{3}u_{4}\},\sigma_{6} = \{u_{0}v_{0}, u_{1}v_{1}, v_{2}v_{3}\}$ and $\sigma_{7} = \{u_{0}v_{0}, u_{1}v_{1}, u_{2}v_{2}\}$ of edges of $P(7,1)$. If any one of these seven signatures appears in a matching $M_{l}$ of $P(7,1)$, where $l \geq 3$, then $M_{l}$ is switching equivalent to $M_{l^{\prime}}$, where $l^{\prime} \leq l-1$.
\end{lemm}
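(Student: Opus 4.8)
The plan is to read every resigning as a cut in the graph and thereby reduce the whole statement to finding one small cut through each $\sigma_i$. Recall that resigning a signature $\Sigma$ at a vertex set $S\subseteq V$ changes the sign of an edge $uv$ exactly when precisely one of $u,v$ lies in $S$; that is, switching at $S$ toggles exactly the edges of the cut $\partial S=[S,V\setminus S]$. Writing $\Sigma=M_l$ for the negative matching, the resulting equivalent signature $\Sigma'$ satisfies $|\Sigma'|=|M_l|-|M_l\cap\partial S|+|\partial S\setminus M_l|$: the matching edges lying in the cut leave $\Sigma'$ and the non-matching edges lying in the cut enter it. So to lower the number of negative edges it suffices to exhibit a cut containing strictly more matching edges than non-matching edges.

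First I would record the following reduction criterion. Since each $\sigma_i$ consists of three edges all belonging to $M_l$, it is enough to find, for every $i$, a vertex set $S_i$ whose cut $\partial S_i$ contains all three edges of $\sigma_i$ together with \emph{at most two} further edges of $P(7,1)$. Indeed, switching at such an $S_i$ removes the three edges of $\sigma_i$ from $M_l$ and can insert at most the two surplus cut-edges, so $|\Sigma'|\le l-3+2=l-1$. The key robustness point is that switching alters \emph{only} the edges of $\partial S_i$; hence, no matter how $M_l$ is completed outside $\sigma_i$, the surplus cut-edges are the only possible new negative edges, and the bound $|\Sigma'|\le l-1$ holds for \emph{every} matching $M_l\supseteq\sigma_i$, not merely for $\sigma_i$ itself.

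The bulk of the work is then to produce the seven cuts. The \emph{local} cases are immediate: for three consecutive spokes $\sigma_7$ one takes $S_7=\{u_0,u_1,u_2\}$, whose cut is $\sigma_7\cup\{u_6u_0,u_2u_3\}$, and for $\sigma_6$ one takes $S_6=\{v_0,v_1,v_2\}$; likewise $S_1=\{u_1,v_1,v_2\}$ and $S_2=\{u_1,u_2,v_2\}$ each give a cut with exactly two surplus edges. The \textbf{main obstacle} is the \emph{spread-out} cases $\sigma_3,\sigma_4,\sigma_5$, whose three edges are mutually far apart (up to the maximal distance allowed by Theorem~\ref{distance}); here a small cut forces $S_i$ to contain several vertices not covered by $\sigma_i$, so the cut is large and one must check that it still peels off at most two extra edges. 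For instance $S_3=\{u_1,u_2,u_3,u_4,v_2,v_3,v_4\}$ gives the cut $\sigma_3\cup\{u_4u_5,u_1v_1\}$, $S_4=\{u_1,u_2,u_3,v_1,v_2,v_3\}$ gives $\sigma_4\cup\{v_3v_4\}$ (only one surplus edge), and $S_5=\{u_1,u_2,u_3,v_0,v_1,v_2,v_3\}$ gives $\sigma_5\cup\{u_0v_0,v_3v_4\}$. To \emph{locate} these cuts systematically I would guess the reduced target $T$ (e.g.\ $\sigma_4\sim\{v_3v_4\}$ and $\sigma_5\sim\{u_0v_0,v_3v_4\}$), confirm that the symmetric difference $\sigma_i\triangle T$ is a cut by verifying it meets every cycle in an even number of edges — and by Theorem~\ref{cycle} only the $4$-, $6$-, $7$- and $8$-cycles need be inspected — and then read off $S_i$ as one side of that cut.

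Finally I would assemble the conclusion. For each $i$ the construction yields a signature $\Sigma'$ equivalent to $M_l$ with $|\Sigma'|\le l-1$, so the minimum size in the switching class of $M_l$ is at most $l-1$; by Theorem~\ref{matching} this minimal signature is itself a matching $M_{l'}$ with $l'\le l-1$, and since $M_l$ and $M_{l'}$ belong to the same switching class, $M_l$ is switching equivalent to $M_{l'}$, as required. These seven configurations are precisely the ones admitting such a two-surplus cut, which is exactly why each of them forces a reduction and hence cannot occur in a minimal matching.
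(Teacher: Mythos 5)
Your proof is correct and takes essentially the same approach as the paper: the seven switching sets you exhibit coincide exactly with the sets $S_1,\dots,S_7$ used in the paper's proof. Your cut-counting criterion (each $\partial S_i$ contains $\sigma_i$ plus at most two surplus edges, so $|\Sigma'|\le l+5-2|M_l\cap\partial S_i|\le l-1$) and the final appeal to Theorem~\ref{matching} simply make explicit the verification that the paper leaves implicit.
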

\begin{proof}
Let $M_{l}^j$ be a matching of size $j$ which contains the set $\sigma_{j}$, where $j=1,\ldots,7$ and $l \geq 3$. Consider the sets $S_1=\{u_{1}, v_{1}, v_{2}\}, S_2=\{u_{1}, v_{2}, u_{2}\}, S_3=\{u_{1}, v_{2}, u_{2}, u_{3}, u_{4}, v_{3}, v_{4}\} , S_4=\{u_{1}, v_{1}, u_{2}, v_{2}, v_{3}, u_{3}\}$, $S_5=\{v_{0}, u_{1}, v_{1}, u_{2}, v_{2}, u_{3}, v_{3}\}, S_6=\{v_{1}, v_{2}, v_{0}\}$ and $S_7=\{u_{0}, u_{1}, u_{2}\}$. If we resign at the vertices belonging to $S_j$, then we get a signature of size upto $l-1$, \textit{i.e.,} $M_{l} \sim M_{l'}$, where $l'\leq l-1$. This proves the lemma.
\end{proof}

 In Lemma~\ref{forbidden}, the inequality $l'\leq (l-1)$ may be strict. For example, if $M_3 = \sigma_4$, then by resigning at the vertices $u_{1}, v_{1}, u_{2}, v_{2}, v_{3}$ and $u_{3}$, we get $M_{3} \sim M_{1}$. The matchings $\sigma_{i}$, where $1 \leq i \leq 7$, are said to be \emph{forbidden} matchings of $P(7,1)$. Let $M_0$ denotes the matching $\Sigma_{1} =\emptyset$ of size zero. Further, there are only two automorphic type matchings of size one, we denote them by $\Sigma_{2}(u_{0}u_{1})$ and $\Sigma_{3}(u_{0}v_{0})$. Any other matching of $P(7,1)$ of size one is automorphic to either $\Sigma_{2}$ or $\Sigma_{3}$, and that $\Sigma_{2}$ is not automorphic to $\Sigma_{3}$.

\begin{lemm} \label{MS7.2}
The number of distinct automorphic type matchings of $P(7,1)$ of size two is $12$.
\end{lemm}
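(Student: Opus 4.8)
The plan is to follow the scheme of Lemma~\ref{MS5.2} and sort the size-two matchings of $P(7,1)$ by the distance between their two edges. Since the two edges of a matching are vertex-disjoint, a shortest path joining their endpoints has at least two vertices, so their distance is at least $2$; by Theorem~\ref{distance} it is at most $n+1=4$. Thus every size-two matching lies at distance $2$, $3$, or $4$, and I would exhibit one representative for each automorphic type within these three classes, the final identification up to switching isomorphism being postponed to the later tables of negative cycles.

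Inside a fixed distance I would further split by the (unordered) type of the two edges, calling an edge outer ($O$), inner ($I$), or a spoke ($S$). Since $\gamma$ interchanges the inner and outer $7$-cycles while fixing every spoke, an $\{I,I\}$ pair is automorphic to an $\{O,O\}$ pair and an $\{I,S\}$ pair to an $\{O,S\}$ pair; hence it suffices to produce representatives of the combined types $\{S,S\}$, $\{O,O\}$, $\{O,I\}$, and $\{O,S\}$. For each type I would normalize one edge by a rotation $\rho_k$ (placing a chosen spoke at $u_0v_0$, or a chosen outer edge at $u_0u_1$), list the positions of the second edge permitted by disjointness, and then collapse the list using the reflection that stabilizes the normalized edge: $\delta_0$ sends a second spoke $u_jv_j$ to $u_{-j}v_{-j}$ (leaving $j\in\{1,2,3\}$); $\delta_4$, which swaps $u_0$ and $u_1$, sends a second edge in position $j$ to position $-j$ in the $\{O,O\}$ and $\{O,I\}$ cases, while $\delta_0$ sends it to position $-j-1$ in the $\{O,S\}$ case. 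One must also check that $\gamma$ merges nothing new: on $\{S,S\}$ it is trivial (it fixes each spoke), on $\{O,O\}$ and $\{O,S\}$ it only realizes the folds $\{I,I\}\to\{O,O\}$ and $\{I,S\}\to\{O,S\}$ already used, and on $\{O,I\}$ composing $\gamma$ with a rotation reproduces exactly the identification $j\mapsto -j$. Carrying this out yields $3,2,4,3$ representatives for the four combined types, i.e.\ $5$ matchings at distance $2$, $4$ at distance $3$, and $3$ at distance $4$, for a total of $12$.

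Finally I would verify that these $12$ representatives are pairwise non-automorphic. The triple consisting of the number of spokes in the matching, whether (when there are no spokes) the two edges lie on the same $7$-cycle, and the distance between the two edges is an automorphism invariant, and it separates every pair except the two distance-two outer–inner matchings $\{u_0u_1,v_0v_1\}$ and $\{u_0u_1,v_1v_2\}$; these are distinguished by observing that the edges of the first lie on a common $4$-cycle (the square $u_0u_1v_1v_0$) while those of the second do not. The main obstacle is precisely the bookkeeping of the middle step: because $P(7,1)$ sits on odd cycles, each reflection $\delta_k$ fixes a single vertex, so a careless position count overcounts, and one must confirm both that every listed configuration is a genuine matching and that $\gamma$ introduces no hidden coincidences. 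The near-collision of the two distance-two outer–inner types is a concrete warning that distance together with edge type is \emph{not} quite a complete invariant here.
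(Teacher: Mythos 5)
Your proposal is correct and follows essentially the same route as the paper: both classify the size-two matchings by the distance ($2$, $3$, or $4$) between the edges together with their types (spoke, outer, inner), arriving at the same $5+4+3=12$ representatives, namely the paper's $\Sigma_4$--$\Sigma_{15}$. Your version merely makes explicit what the paper asserts as ``easy to see''---the normalization by $\rho_k$, the folding by the edge-stabilizing reflections and $\gamma$, and the invariants (spoke count, same-cycle flag, distance, plus the common $4$-cycle separating $\{u_0u_1,v_0v_1\}$ from $\{u_0u_1,v_1v_2\}$) certifying pairwise non-automorphy.
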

\begin{proof}
 We classify these matchings by looking at the distance of their edges. Recall that the distance between any two edges of $P(7,1)$ is at most four.
\begin{enumerate}

\item[(i)] Let the edges of $M_2$ be at distance two. Five such possible matchings of size two are $\Sigma_{4}(u_{0}u_{1}, v_{0}v_{1}),$ $\Sigma_{5}(u_{0}u_{1}, v_{1}v_{2}),  \Sigma_{6}(u_{0}u_{1}, v_{2}u_{2}), \Sigma_{7}(u_{0}u_{1}, u_{2}u_{3})$ and $\Sigma_{8}(u_{0}v_{0}, u_{1}v_{1})$. Note that any matching of $P(7,1)$ of size two contains either two consecutive spokes, or one spoke and one edge from outer (inner) cycle or two edges from the outer (inner) cycle, or one edge from the inner cycle and one from the outer cycle. Each such possible $M_2$, whose edges are at distance two, is automorphic to one of $\Sigma_8, \Sigma_6, \Sigma_7, \Sigma_4$ and $\Sigma_5$. These five matchings are also pairwise non-automorphic.

\item[(ii)] Let the edges of $M_2$ be at distance three. There are only four automorphic type matchings of size two having edges at distance three. We denote them by $\Sigma_{9}(u_{0}u_{1}, v_{2}v_{3}), \Sigma_{10}(u_{0}u_{1}, v_{3}u_{3}), \Sigma_{11}(u_{0}v_{0}, v_{2}u_{2})$ and $\Sigma_{12}(u_{0}u_{1}, u_{3}u_{4})$. It is easy to see that any other matching of size two whose edges are at distance three is automorphic to one of $\Sigma_{9}, \Sigma_{10}, \Sigma_{11}$ and $\Sigma_{12}$. Further, these matchings are pairwise non-automorphic.

\item[(iii)] Let the edges of $M_2$ be at distance four. There are only three automorphic type matchings of size two whose edges are at distance four. We denote them by $\Sigma_{13}(u_{0}u_{1}, v_{3}v_{4}), \Sigma_{14}(u_{0}u_{1}, v_{4}u_{4})$ and  $\Sigma_{15}(u_{0}v_{0}, v_{3}u_{3})$. Any other $M_2$ whose edges are at distance four is automorphic to one of $\Sigma_{13}, \Sigma_{14}$ and $\Sigma_{15}$. Further, no two of these matchings are automorphic to each other.
\end{enumerate}
This completes the proof.
\end{proof}

\begin{lemm} \label{MS7.3}
The number of distinct automorphic type matchings of $P(7,1)$ of size three is $23$.
\end{lemm}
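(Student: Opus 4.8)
The plan is to enumerate all automorphic type matchings of size three in $P(7,1)$ by organizing the count according to the number of spokes contained in the matching, exactly as was done for $P(5,1)$ in Lemma~\ref{MS5.3}. Since $P(7,1)$ is cubic and a matching may take at most two edges from the outer cycle and at most two from the inner cycle, a matching of size three has either zero, one, two, or three spokes, and I would treat these four cases separately. The key structural tools are the automorphism group $\langle \rho_k, \delta_k, \gamma \rangle$, which lets me fix a representative edge (say $u_0u_1$ on the outer cycle, or $u_0v_0$ for a spoke) and then enumerate the inequivalent placements of the remaining edges up to rotation and reflection, together with Theorem~\ref{distance} bounding inter-edge distances by $n+1 = 4$.

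First I would handle the no-spoke case: two edges lie on one cycle and one on the other, and by $\gamma$ I may assume two edges sit on the outer cycle; fixing them as $u_0u_1$ and (up to rotation/reflection) a second non-adjacent outer edge, I count the inequivalent positions for the single inner-cycle edge. Next, for matchings with exactly one spoke $u_0v_0$, the remaining two edges are distributed among the two cycles (either both on one cycle, or one on each), and I enumerate the distance-inequivalent placements. For two spokes I distinguish whether the spokes are consecutive, at distance three, or at distance four, and for each count the admissible third edge; for three spokes I enumerate the inequivalent triples of spokes on a $7$-cycle up to the dihedral action. In every case I would explicitly list one representative $\Sigma_j$ per automorphic type, argue that any other matching falls into one of these types via a suitable rotation, reflection, or $\gamma$, and confirm the representatives are pairwise non-automorphic by comparing invariants such as the multiset of pairwise edge-distances and the number of spokes.

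The main obstacle I anticipate is \emph{completeness and non-redundancy of the enumeration}: with three edges on a graph of this size the number of raw configurations is large, and the subtlety is ensuring that I neither miss a genuinely new automorphic type nor list two representatives that are secretly automorphic. The forbidden matchings $\sigma_1,\ldots,\sigma_7$ of Lemma~\ref{forbidden} are the crucial device here — any matching containing one of these configurations is switching equivalent to a smaller matching, so such configurations need not (and should not) be counted as genuinely new size-three types when we later pass to switching isomorphism. Thus while the present lemma counts all $23$ automorphic types combinatorially, I would keep track of which of them contain a forbidden $\sigma_i$, since those will be reduced away in the subsequent switching analysis. Concretely, I expect the breakdown to be something like several types with no spoke, a comparable number with one spoke, a handful with two spokes split by spoke-distance, and two types with three spokes (mirroring the pattern $\Sigma_{20},\Sigma_{21}$ in $P(5,1)$), summing to $23$; the careful bookkeeping of the distance-three and distance-four placements afforded by the larger girth of $P(7,1)$ is where the count grows beyond the $P(5,1)$ case and is where I would be most vigilant against double-counting.
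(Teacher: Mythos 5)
Your overall strategy (case split by the number of spokes, representatives fixed via $\langle \rho_k,\delta_k,\gamma\rangle$, distance bookkeeping) is the same as the paper's, but two of your structural premises are wrong, and each would break the count. First, you assert that a matching may take at most two edges from the outer cycle and at most two from the inner cycle. That is true in $P(5,1)$ but false in $P(7,1)$: a $7$-cycle has matching number $3$, so all three edges of $M_3$ can lie on a single cycle. The paper's no-spoke case accordingly includes the type $\Sigma_{16}(u_{0}u_{1}, u_{2}u_{3}, u_{4}u_{5})$, which your enumeration (``two edges on the outer cycle, one on the inner'') cannot produce; you would find only two no-spoke types instead of the paper's three.

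Second, you misplace the role of the forbidden matchings $\sigma_1,\ldots,\sigma_7$. You propose to ``count all $23$ automorphic types combinatorially'' and merely keep track of which contain some $\sigma_i$, deferring their elimination to the later switching analysis. But the paper reaches $23$ only by excluding the forbidden matchings at enumeration time: its proof begins by stating that, since each forbidden matching has size three and is switching equivalent to a smaller matching, only matchings other than the forbidden ones are considered. Note that since $|\sigma_i| = 3$, a size-three matching containing $\sigma_i$ \emph{is} (automorphic to) $\sigma_i$, so this exclusion removes entire automorphic types from the tally; including them yields strictly more than $23$. For instance, three consecutive spokes ($\sigma_7$, spoke-gap pattern $(1,1,5)$ around the $7$-cycle) is a genuine automorphic type absent from the paper's list. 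This is also why your predicted breakdown fails in the three-spoke case: up to the dihedral action, spoke triples realize the four gap patterns $(1,1,5)$, $(1,2,4)$, $(1,3,3)$, $(2,2,3)$, and after discarding the forbidden $(1,1,5)$ the paper keeps three types $\Sigma_{36},\Sigma_{37},\Sigma_{38}$ --- not the two you anticipate by analogy with $\Sigma_{20},\Sigma_{21}$ in $P(5,1)$. The correct tally is $3+12+5+3=23$ (no spoke, one spoke, two spokes, three spokes); your plan as written would land on a different number.
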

\begin{proof}
We classify matchings of size three on the basis of the number of spokes contained in it. Since each forbidden matching is a matching of size three and that they are switching equivalent to a matching of size at most two, we consider matchings other than the forbidden matchings.

\begin{enumerate}
\item[(i)] Let $M_3$ have no spoke. The possible distinct  automorphic type matchings of size three without spokes are denoted by $\Sigma_{16}(u_{0}u_{1}, u_{2}u_{3}, u_{4}u_{5}), \Sigma_{17}(u_{0}u_{1}, u_{2}u_{3}, v_{4}v_{5})$ and $\Sigma_{18}(u_{0}u_{1}, u_{4}u_{3}, v_{5}v_{6})$. Any other matching of size three with no spokes is either a forbidden matching or automorphic to one of $\Sigma_{16}, \Sigma_{17}$ and $\Sigma_{18}$. Further, it is easy to see that they are pairwise non-automorphic.

\item[(ii)] Let $M_3$ have only one spoke, say $u_0v_0$. Possible automorphic type matchings are $\Sigma_{19}(u_{0}v_{0}, u_{1}u_{2}, u_{3}u_{4})$, 
$\Sigma_{20}(u_{0}v_{0}, u_{1}u_{2}, u_{4}u_{5}), \Sigma_{21}(u_{0}v_{0}, u_{1}u_{2}, u_{5}u_{6}),  \Sigma_{22}(u_{0}v_{0}, u_{2}u_{3}, u_{4}u_{5}),  
\Sigma_{23}(u_{0}v_{0}, u_{1}u_{2}, v_{2}v_{3})$, \linebreak[4]
$\Sigma_{24}(u_{0}v_{0}, u_{1}u_{2}, v_{3}v_{4}), \Sigma_{25}(u_{0}v_{0}, u_{1}u_{2}, v_{4}v_{5}),  \Sigma_{26}(u_{0}v_{0}, u_{1}u_{2}, v_{5}v_{6}), \Sigma_{27}(u_{0}v_{0}, u_{2}u_{3}, v_{3}v_{4})$,\linebreak[4] $\Sigma_{28}(u_{0}v_{0}, u_{2}u_{3}, v_{4}v_{5}), \Sigma_{29}(u_{0}v_{0}, u_{2}u_{3}, v_{5}v_{6})$ and  $\Sigma_{30}(u_{0}v_{0}, u_{3}u_{4}, v_{5}v_{6})$. Any other matching of size three containing only one spoke is automorphic to one of these twelve matchings. Further, any two of these matchings are pairwise non-automorphic.

\item[(iii)] Let $M_3$ have only two spokes. If the spokes are consecutive then let they be $v_{0}u_{0}$ and $v_{1}u_{1}$. Thus the only possible automorphic type matching is $\Sigma_{31}(v_{0}u_{0}, v_{1}u_{1}, u_{3}u_{4})$. If spokes are at distance three then let the spokes be $v_{0}u_{0}$ and $v_{2}u_{2}$. The possible automorphic type matchings are 
$\Sigma_{32}(v_{0}u_{0}, v_{2}u_{2}, u_{3}u_{4})$ and $\Sigma_{33}(v_{0}u_{0}, v_{2}u_{2}, u_{4}u_{5})$. If the spokes are at distance four then let they be $v_{0}u_{0}$ and $v_{3}u_{3}$. The possible automorphic type matchings are $\Sigma_{34}(v_{0}u_{0}, v_{3}u_{3}, u_{1}u_{2})$ and $\Sigma_{35}(v_{0}u_{0}, v_{3}u_{3}, u_{4}u_{5})$. Because of the forbidden matchings and the automorphism group of $P(7,1)$, it is easy to see that any other matching of size three containing only two spokes is automorphic to one of $\Sigma_{31}, \Sigma_{32}, \Sigma_{33}, \Sigma_{34}$ and $\Sigma_{35}$. Further, these matchings are pairwise non-automorphic.

\item[(iv)] Let $M_3$ have three spokes. The possible automorphic type matchings of size three having three spokes are $\Sigma_{36}(v_{0}u_{0}, v_{1}u_{1}, u_{3}v_{3})$, $\Sigma_{37}(v_{0}u_{0}, v_{1}u_{1}, u_{4}v_{4})$ and $\Sigma_{38}(v_{0}u_{0}, v_{2}u_{2}, u_{4}v_{4})$. Any matching of size three with three spokes is automorphic to $\Sigma_{36}$, $\Sigma_{37}$ or $\Sigma_{38}$. Also, these matchings are pairwise non-automorphic.
\end{enumerate}
This completes the proof of the lemma.
\end{proof}

\begin{lemm} \label{MS7.4}
The number of distinct automorphic type matchings of $P(7,1)$ of size four is $10$.
\end{lemm}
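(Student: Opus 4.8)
The plan is to imitate the proofs of Lemmas~\ref{MS7.3} and~\ref{MS5.4} and to classify the size-four matchings $M_4$ of $P(7,1)$ according to the number of spokes they contain, which may be $0,1,2,3$ or $4$. Throughout I would lean on three reductions to keep each list short. First, since the outer cycle $u_0u_1\ldots u_6$ and the inner cycle $v_0v_1\ldots v_6$ are both $7$-cycles, each can carry at most three independent edges of a matching. Second, the automorphism $\gamma$ lets me assume (in the spoke-free case) that the outer cycle carries at least as many matching edges as the inner one, while the rotations $\rho_k$ and reflections $\delta_k$ let me fix one chosen edge or one chosen spoke in a canonical position and work modulo its stabiliser. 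Third, and most importantly, I would discard at once every $M_4$ that contains one of the seven forbidden configurations $\sigma_1,\ldots,\sigma_7$ of Lemma~\ref{forbidden}: by that lemma such a matching is switching equivalent to one of strictly smaller size, so it contributes nothing new to the eventual classification up to switching isomorphism and must be left out of the count (exactly as the forbidden triples were excluded in Lemma~\ref{MS7.3}).

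For each spoke count I would then enumerate the genuinely distinct representatives. When $M_4$ has no spoke, the four edges split as $(3,1)$ or $(2,2)$ between the two cycles (using $\gamma$ to rule out $(1,3)$), and after placing the outer edges in canonical position the remaining freedom lies only in the placement of the inner edges modulo the stabiliser of that outer configuration; the spoke-free forbidden triples $\sigma_2,\sigma_3,\sigma_4,\sigma_5$ then eliminate several placements. When $M_4$ has exactly one spoke I would fix it to be $u_0v_0$ and distribute the three remaining edges over the paths $u_1\ldots u_6$ and $v_1\ldots v_6$, discarding any placement that creates a forbidden triple (now $\sigma_1$ becomes relevant as well). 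The two- and three-spoke cases I would organise by the pairwise distances among the chosen spokes, which are at most four by Theorem~\ref{distance}, and here $\sigma_6$ and $\sigma_7$ remove the configurations with mutually close spokes. Finally, when all four edges are spokes, a matching is simply a $4$-subset of $\Zl_7$; up to the dihedral action and the exclusion of three consecutive spokes (the triple $\sigma_7$), only the gap patterns $(3,1,2,1)$ and $(2,2,2,1)$ survive, giving two representatives.

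For each listed representative I would verify that it is not reducible by any $\sigma_i$ and that no two representatives are related by an automorphism of $P(7,1)$; the latter is checked by comparing automorphism-invariant data such as the number of spokes, the multiset of pairwise edge-distances, and the incidence pattern of the cycle edges with the spokes, precisely as the assertion ``pairwise non-automorphic'' is justified in Lemmas~\ref{MS7.2} and~\ref{MS7.3}. Summing the representatives over the five spoke-count cases should yield the claimed total of ten. The main obstacle I anticipate is bookkeeping: the number of raw placements is large, and the delicate point is to make sure that \emph{every} placement containing a forbidden triple is actually removed — so the count is not inflated by reducible matchings — while simultaneously not identifying two genuinely different types; a systematic tabulation of the gap sequences on each cycle, together with the stabiliser of the fixed edge or spoke, is the safeguard I would rely on to keep the enumeration both exhaustive and non-redundant.
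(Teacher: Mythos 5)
Your overall skeleton (classify by spoke count, sieve with the forbidden triples $\sigma_1,\ldots,\sigma_7$, then separate survivors by automorphism invariants) matches the paper's, but there is a genuine gap: the $\sigma$-sieve is \emph{not} a complete reduction system for size four, and the paper supplements it with two further ad hoc switching reductions that your plan omits. First, for a matching containing two \emph{consecutive} spokes, the paper resigns at the two outer endpoints of those spokes; this converts $M_4$ into a spoke-free signature of size four, which is then killed because every spoke-free $M_4$ contains one of $\sigma_2,\sigma_3,\sigma_4$. Your claim that ``$\sigma_6$ and $\sigma_7$ remove the configurations with mutually close spokes'' is false: for instance $\{u_0v_0,u_1v_1,u_3u_4,v_3v_4\}$ contains no forbidden triple (check the edge distances: in $\sigma_6$ the cycle edge is at distance $2$ from a spoke, here it is at distance $3$; the triple $\{u_1v_1,u_3u_4,v_3v_4\}$ is likewise not automorphic to $\sigma_1$), yet it reduces by resigning at $u_0,u_1$ and must be excluded. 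Second, and more consequentially, your four-spoke analysis is wrong: resigning at all of $u_0,\ldots,u_6$ fixes every outer and inner edge but flips all seven spokes, so any four-spoke matching is switching equivalent to its complementary \emph{three}-spoke matching of size three. Hence the gap patterns $(3,1,2,1)$ and $(2,2,2,1)$, which indeed survive your $\sigma_7$-sieve, contribute nothing new; the paper counts zero types in this case, not two.

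With the correct reductions the case counts are: zero spokes, $0$ (every placement contains $\sigma_2$, $\sigma_3$ or $\sigma_4$ --- not merely ``several''); one spoke, $3$ ($\Sigma_{39},\Sigma_{40},\Sigma_{41}$); two spokes at distance three or four, $6$ ($\Sigma_{42}$--$\Sigma_{47}$); three pairwise non-consecutive spokes, $1$ ($\Sigma_{48}$); four spokes, $0$. Your tally would instead give $3+6+1+2=12$ (or more, if spoke-free survivors were kept), so the proposal as written does not reach the stated total of $10$. Note also that the lemma's count of ``automorphic type'' matchings is implicitly taken \emph{after} discarding everything reducible by these switchings, so a purely dihedral enumeration of matchings up to automorphism, which your four-spoke paragraph carries out, is answering a different (and larger) counting problem.
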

\begin{proof}
We classify the matchings of size four on the basis of number of spokes contained in it. If any $M_4$ contains a forbidden matching then that matching is not considered as a possible candidate for distinct automorphic type matching of size four.
\begin{enumerate}
\item[(i)] Let $M_4$ have no spoke. It is clear that any such $M_4$ has its three edges on outer cycle and the remaining edge on the inner cycle, or two edges on the inner cycle and other two edges on the outer cycle. Therefore, any such $M_4$ must contain one of $\sigma_2, \sigma_3$ and $\sigma_4$. Hence by Lemma~\ref{forbidden}, every matching of size four containing no spoke is equivalent to a matching $M_{l'}$, where $l' \leq 3$.

\item[(ii)] Let $M_4$ have one spoke and let it be $u_0v_0$. It is clear that the remaining three edges of $M_4$ either lie on the outer (inner) cycle, or two edges lie on the outer cycle and one edge lies on the inner cycle. If three edges lie on the outer cycle, then one such matching is $\Sigma_{39}(v_{0}u_{0}, u_{1}u_{2}, u_{3}u_{4}, u_{5}u_{6})$. If two edges lie on the outer cycle and one edge lies on the inner cycle, then the possible automorphic type matchings are $\Sigma_{40}(v_{0}u_{0}, u_{1}u_{2}, u_{3}u_{4}, v_{5}v_{6})$ and $\Sigma_{41}(v_{0}u_{0}, u_{1}u_{2}, v_{3}v_{4}, u_{5}u_{6})$. Any other matching of size four containing only one spoke either contains one of the forbidden matchings or automorphic to one of $\Sigma_{39}, \Sigma_{40}$ and $\Sigma_{41}$. Further, these three matchings are pairwise non-automorphic.

\item[(iii)] Let $M_4$ have two spokes. If the spokes are consecutive then by resigning at the end vertices of the spokes lying on the outer cycle, we find that the matching is equivalent to a matching (signature) of size four. Note that this resultant signature is either equivalent to a matching of size four of case (i) or it is not a matching. Therefore in both cases, it is switching equivalent to a matching $M_{l'}$, where $l' \leq 3$.  If the spokes are at distance three or four, then the possible automorphic type matchings of size four are $\Sigma_{42}(v_{0}u_{0}, v_{2}u_{2}, u_{3}u_{4}, u_{5}u_{6}), \Sigma_{43}(v_{0}u_{0}, v_{2}u_{2}, u_{3}u_{4}, v_{5}v_{6}), \Sigma_{44}(v_{0}u_{0}, v_{2}u_{2}, u_{4}u_{5}, v_{5}v_{6})$,  $\Sigma_{45}(v_{0}u_{0}, v_{3}u_{3}, u_{1}u_{2}, u_{4}u_{5}),  \Sigma_{46}(v_{0}u_{0}, v_{3}u_{3}, u_{1}u_{2}, v_{4}v_{5})$ and $\Sigma_{47}(v_{0}u_{0}, v_{3}u_{3}, u_{4}u_{5}, v_{5}v_{6})$. Any other matching of size four containing only two spokes is automorphic to one of these six matchings. Further, any two of these matchings are pairwise non-automorphic.

\item[(iv)] Let $M_4$ have three spokes. It is clear that a matching of size four cannot contain two or three consecutive spokes upto switchings. Thus the only possible automorphic type matching is \linebreak[4] $\Sigma_{48}(v_{0}u_{0}, v_{2}u_{2}, v_{4}u_{4}, u_{5}u_{6})$.

\item[(v)] Let $M_4$ have four spokes. By resigning at the vertices from the set $\{u_0, u_1, u_2, u_3, u_4, u_4, u_6\}$, we see that such a matching is equivalent to a matching of size three containing three spokes.
\end{enumerate}
This proves the lemma.
\end{proof}

\begin{theorem} \label{M5}
Every matching of $P(7,1)$ of size five is switching equivalent to a matching $M_{l'}$, where $l' \leq 4$.
\end{theorem}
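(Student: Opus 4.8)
The plan is to show that every size-five matching $M_5$ of $P(7,1)$ falls into one of two situations: either it contains, under some automorphism in $\mathrm{Aut}(P(7,1))$, a copy of one of the seven forbidden matchings $\sigma_1,\ldots,\sigma_7$, in which case Lemma~\ref{forbidden} immediately produces a switching-equivalent matching of size at most four; or else it admits an explicit resigning that strictly lowers its number of negative edges. I would first organize the analysis by the triple $(s,a,b)$, where $s$ counts the spokes, $a$ the outer-cycle edges and $b$ the inner-cycle edges of $M_5$. Since a matching of $C_7$ has at most three edges we have $a,b\le 3$, and since the spokes must avoid the endpoints of the cycle edges we have $s+2a\le 7$ and $s+2b\le 7$. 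Together with $s+a+b=5$ and the symmetry $\gamma$ (which lets me assume $a\ge b$), these constraints leave only the feasible types $(5,0,0),(4,1,0),(3,2,0),(3,1,1),(2,2,1),(1,3,1),(1,2,2)$ and $(0,3,2)$.

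I would dispatch the spoke-heavy types first. When $s=5$, two of the seven spoke positions are omitted, so on the cycle of positions the five survivors contain three consecutive ones; the corresponding spokes form a copy of $\sigma_7$, and Lemma~\ref{forbidden} applies. When $s=4$ (type $(4,1,0)$) the four spokes lie on the five-position path obtained by deleting the endpoints of the unique cycle edge, and four of five consecutive positions either contain three consecutive ones (again $\sigma_7$) or split as two adjacent pairs; in the latter case I would resign at the two outer endpoints of one such pair together with one adjacent outer vertex, as in the computation that carries $\{u_0u_1,u_2v_2,u_3v_3,u_5v_5,u_6v_6\}$ to the size-four matching $\{u_1v_1,u_3u_4,u_5v_5,u_6v_6\}$ by resigning at $\{u_1,u_2,u_3\}$. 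The medium types $(3,2,0),(3,1,1)$ and $(2,2,1)$ I would reduce by combining the consecutive-spoke resigning already used in Lemma~\ref{MS7.4}(iii) with the forbidden configurations: two consecutive spokes can be traded for fewer negative edges (followed, if needed, by one further resigning at a vertex thereby made of degree two), while for pairwise non-adjacent spokes the alignment is forced and one finds a copy of $\sigma_1$ or $\sigma_6$ — for instance the only $(3,1,1)$ matching with spokes at positions $0,2,4$ is $\{u_0v_0,u_2v_2,u_4v_4,u_5u_6,v_5v_6\}$, which contains the rotated copy $\{u_5u_6,v_5v_6,u_0v_0\}$ of $\sigma_1$.

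The bulk of the work, and the main obstacle, is the spoke-light types $(1,3,1),(1,2,2)$ and especially $(0,3,2)$, where there is no spoke structure to exploit and I must instead verify directly that the dense configuration of four or five cycle edges always forces one of the no-spoke triples $\sigma_2,\sigma_3,\sigma_4,\sigma_5$ (or, in the one-spoke types, $\sigma_1$). Concretely, for $(0,3,2)$ the three outer edges form a maximum matching of the outer $C_7$, unique up to rotation, and I would run through the size-two matchings of the inner $C_7$ relative to this fixed outer matching, checking in each alignment that some outer--inner--outer or outer--inner--inner triple matches a forbidden pattern. This is a finite but delicate check, and the delicate point is completeness: I must guarantee that the seven forbidden matchings together with the consecutive-spoke resigning leave no forbidden-free size-five matching unaccounted for. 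I expect the cleanest way to secure this is to observe that a forbidden-free $M_5$ would have every size-four sub-matching forbidden-free, hence automorphic to one of $\Sigma_{39},\ldots,\Sigma_{48}$ from Lemma~\ref{MS7.4}, and then to show that none of these ten size-four types extends to a forbidden-free size-five matching, so that the second situation above never actually produces a genuinely new signature.
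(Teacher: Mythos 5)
Your skeleton is essentially the paper's: classify $M_5$ by spoke count (your $(s,a,b)$ bookkeeping is a tidier version of the paper's cases), kill spoke-heavy types by $\sigma_7$ or explicit resignings, and reduce spoke-light types by locating forbidden triples via Lemma~\ref{forbidden}. The spoke-heavy and medium analysis is fine. But the load-bearing claim for the types $(1,3,1)$ and $(1,2,2)$ --- that a copy of some $\sigma_i$ \emph{always} appears --- is false, and this is a genuine gap. Take $M_5=\{u_0v_0,\,u_1u_2,\,u_3u_4,\,u_5u_6,\,v_3v_4\}$ of type $(1,3,1)$: checking all rotations, reflections and $\gamma$-images of $\sigma_1,\dots,\sigma_7$, nothing matches. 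The aligned pair $u_3u_4,v_3v_4$ would need a spoke at position $2$ or $5$ (for $\sigma_1$), an outer edge at $(6,0)$ or $(0,1)$ (for $\sigma_4$), or an inner edge at $(6,0)$ or $(0,1)$ (for $\gamma(\sigma_4)$) --- all blocked precisely because the spoke occupies position $0$ on both cycles; $\sigma_2$ would need $v_2v_3$ or $v_4v_5$; $\sigma_3,\sigma_5$ fail on offsets and $\sigma_6,\sigma_7$ on spoke counts. Likewise $\{u_0v_0,u_1u_2,u_3u_4,v_3v_4,v_5v_6\}$ of type $(1,2,2)$ is forbidden-free. The theorem still holds for these matchings --- since the seven $4$-cycles $C_0,\dots,C_6$ together with the outer cycle form a cycle basis, comparing signs on this basis and invoking Theorem~\ref{Signature} shows the first is switching equivalent to $\{u_0v_0,u_1u_2,v_5v_6\}$ and the second to $\{u_1v_1,u_6v_6\}$ --- but your plan as written would hit these configurations and stall. (In fairness, the paper's own proof of case (ii) asserts these very configurations contain $\sigma_2$ or $\sigma_4$, which is the same oversight; the ad hoc switchings above are what actually close those cases.)

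Your proposed safety net does not repair this, because it misreads Lemma~\ref{MS7.4}: the ten types $\Sigma_{39},\dots,\Sigma_{48}$ do not exhaust the forbidden-free matchings of size four. That lemma disposes of the no-spoke, consecutive-two-spoke and four-spoke families by \emph{switching}, not by forbidden containment, and forbidden-free members of those families exist, e.g.\ $\{u_0v_0,u_1v_1,u_3v_3,u_5v_5\}$, $\{u_0v_0,u_1v_1,u_3u_4,v_5v_6\}$, and the no-spoke $\{u_1u_2,u_3u_4,u_5u_6,v_3v_4\}$. The last is exactly a size-four sub-matching of the counterexample above, so your dichotomy ``forbidden-free $M_5$ has all sub-$M_4$'s among the ten types'' fails at the one place it is needed. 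To make the argument complete you must either enlarge the forbidden list to cover the exceptional alignment (an aligned outer--inner pair lying at maximal offset from a spoke, as in the two matchings above) or dispatch those few residual configurations by explicit resignings, verified, for instance, by the cycle-basis sign comparison just described.
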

\begin{proof}
We prove the theorem by classifying the matchings of size five on the basis of number of spokes contained in it.
\begin{enumerate}

\item[(i)] Let $M_5$ have no spoke. Note that three edges of $M_5$ may lie on the outer (inner) cycle and remaining
two edges lie on the inner (outer) cycle. It is easy to see that each such combination of five edges of $P(7,1)$ must contain either $\sigma_2$ or $\sigma_4$. Therefore by Lemma~\ref{forbidden}, each such $M_5$ is switching equivalent to a matching $M_{l'}$, where $l' \leq 4$.

\item[(ii)] Let $M_5$ have only one spoke and let it be $u_0v_0$. There are two possibilities for the remaining four edges of $M_5$.\\
(a) Three edges of $M_5$ lie on the outer (inner) cycle and one edge lies on the inner (outer) cycle.\\
(b) Two edges of $M_5$ lie on the outer cycle and remaining two edges lie on the inner cycle.

Note that in (a), the three edges lying on the outer cycle must be $u_1u_2, u_3u_4$ and $u_5u_6$. Therefore for the fifth edge of $M_5$, whichever edge we choose from the inner cycle, $M_5$ will contain either $\sigma_2$ or $\sigma_4$. Hence by Lemma~\ref{forbidden}, each such $M_5$ is switching equivalent to a matching $M_{l^{\prime}}$, where $l^{\prime} \leq 4$.

In (b), let two edges of $M_5$ lying on the outer cycle be  $u_1u_2$ and $u_3u_4$. The possibilities for the remaining two edges from the inner cycle are $\{v_1v_2, v_3v_4\}$, $\{v_1v_2, v_4v_5\}$, $\{v_1v_2, v_5v_6\}$, $\{v_2v_3, v_4v_5\}$, $\{v_2v_3, v_5v_6\}$ and $\{v_3v_4, v_5v_6\}$. In all of these possible cases, it is easy to see that $M_5$ contains a forbidden matching. Hence by Lemma~\ref{forbidden}, each such $M_5$ is switching equivalent to some matching $M_{l^{\prime}}$, where $l^{\prime} \leq 4$. In the similar way, it can be proved that any other matching of size five, containing one spoke and two edges from both outer as well as inner cycles, is switching equivalent to a matching $M_{l^{\prime}}$, where $l^{\prime} \leq 4$.

\item[(iii)] Let $M_5$ have two spokes. If $M_5$ has two consecutive spokes, i.e., spokes at distance two, then by switching at their end vertices lying on outer cycle, we find that $M_5$ is switching equivalent to either a signature of $P(7,1)$ of size five with no spokes or a matching of size five with no spoke. In both cases, $M_5$ is switching equivalent to a matching $M_{l^{\prime}}$, where $l^{\prime} \leq 4$. 

If two spokes of $M_5$ are at distance three then let they be  $u_0v_0$ and $u_2v_2$. The remaining three edges have to be chosen from $\{u_3u_4, u_4u_5, u_5u_6, v_3v_4, v_4v_5, v_5v_6\}$. We can take two edges from outer cycle and one edge from inner cycle or vice-versa. If the two edges from the outer cycle are $u_3u_4$ and $u_5u_6$, then for all possible choices of the edge from the inner cycle, $M_5$ must contain either $\sigma_1$ or $\sigma_2$. Similarly, if two edges are taken from the inner cycle, $M_5$ will contain some forbidden matching. Hence each such $M_5$ is switching equivalent to a matching $M_{l^{\prime}}$, where $l^{\prime} \leq 4$. Similarly, it can be shown that if the two spokes of $M_5$ are at distance four, then also $M_5$ is switching equivalent to a matching $M_{l^{\prime}}$, where $l^{\prime} \leq 4$.

\item[(iv)] Let $M_5$ have three spokes. It is clear from part (iii) that, out of the three spokes, no two spokes \linebreak[4] can be at distance two. Hence the only possibility for such a matching of size five is \linebreak[4] $\{u_0v_0, u_2v_2, u_4v_4, u_5u_6, v_5v_6\}$. But this matching contains $\sigma_1$, hence by Lemma~\ref{forbidden}, we get that this $M_5$ is switching equivalent to a matching $M_{l^{\prime}}$, where $l^{\prime} \leq 4$.

\item[(v)] Let $M_5$ have four spokes. Obviously, resigning at $u_0, u_1, u_2, u_3, u_4 , u_5$ and  $u_6$, each such matching of size five becomes switching equivalent to a matching $M_{l^{\prime}}$, where $l^{\prime} \leq 4$.

\item[(vi)] Let $M_5$ have five spokes. Resigning at $u_0, u_1, u_2, u_3, u_4 , u_5$ and $u_6$, we see that each such matching of size five is switching equivalent to a matching of size two.
\end{enumerate}
This completes the proof of theorem.
\end{proof}

Note that any matching of $P(7,1)$ of size six or seven contains some $M_5$. So by Theorem~\ref{M5}, we get the following corollary.  

\begin{corollary}
Every matching of $P(7,1)$ of size six or seven is switching equivalent to a matching $M_{l^{\prime}}$, where $l^{\prime} \leq 4$.
\end{corollary}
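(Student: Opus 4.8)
The plan is to reduce the statement about matchings of size six or seven directly to Theorem~\ref{M5}, which has already disposed of the size-five case. The key structural observation is that any matching of $P(7,1)$ is a subset of edges no two of which share a vertex, so passing to a subset of the edges preserves the matching property. In particular, if $M_6$ (respectively $M_7$) is a matching of size six (resp. seven), then deleting any one (resp. any two) of its edges yields a matching of size exactly five. The crux is therefore to show that possessing a size-five submatching forces switching-equivalence down to a matching of size at most four.

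First I would formalize the containment: given a matching $M_l$ with $l \geq 5$, choose any five of its edges to form a sub-collection $M_5 \subseteq M_l$; since removing edges from a matching leaves a matching, $M_5$ is a genuine matching of size five. By Theorem~\ref{M5}, $M_5$ is switching equivalent to some matching $M_{l'}$ with $l' \leq 4$. The subtle point is that Theorem~\ref{M5} operates via Lemma~\ref{forbidden}: in every case the argument exhibits one of the forbidden triples $\sigma_1,\ldots,\sigma_7$ sitting inside $M_5$, and a forbidden triple sitting inside $M_5$ also sits inside the larger matching $M_l \supseteq M_5$. Hence I would invoke Lemma~\ref{forbidden} directly on $M_l$: whichever $\sigma_j$ is contained in the size-five submatching is likewise contained in $M_6$ or $M_7$, and resigning at the corresponding vertex set $S_j$ reduces the total number of negative edges by at least one, giving $M_l \sim M_{l'}$ with $l' \leq l-1$.

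The cleanest route is thus to argue that \emph{every} matching of size five either contains a forbidden matching $\sigma_j$ or is already switching-reducible; inspecting the proof of Theorem~\ref{M5}, every non-trivial case reduces because the five edges are forced to contain some $\sigma_j$, and the only residual cases (four or five spokes) reduce by resigning at all the outer-cycle vertices $u_0,\ldots,u_6$. Both of these reduction mechanisms are inherited by any super-matching: a forbidden $\sigma_j \subseteq M_5 \subseteq M_l$ gives $\sigma_j \subseteq M_l$, and resigning at $\{u_0,\ldots,u_6\}$ acts on $M_l$ as a whole, replacing each spoke's sign and each outer edge's sign, again strictly decreasing the edge count when there are enough spokes. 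Therefore $M_6$ and $M_7$ inherit a size-reduction and are switching equivalent to a matching of strictly smaller size.

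The main obstacle I anticipate is a bookkeeping one rather than a conceptual one: I must make sure that the reduction applied to the size-five submatching does not inadvertently \emph{increase} the size of $M_l$ on the extra edge(s) not in $M_5$. When we resign at the vertices of $S_j$ to kill the forbidden pattern, the additional edge of $M_6$ (or the two extra edges of $M_7$) might themselves be flipped from negative to positive or vice versa. To handle this cleanly I would choose the five-element submatching $M_5$ so that the forbidden pattern is localized and then verify that $S_j$ is disjoint from—or at least interacts harmlessly with—the endpoints of the remaining edges; alternatively, one simply repeats the reduction, noting that each application strictly lowers $|M_l|$ and the process must terminate at some $M_{l'}$ with $l' \leq 4$. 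Since the corollary only claims switching-equivalence to \emph{some} matching of size at most four, not to a specific one, the iterated-reduction argument suffices and avoids the delicate edge-by-edge accounting entirely.
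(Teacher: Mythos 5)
Your proposal is correct and takes essentially the same route as the paper, whose entire justification is the one-line remark that any matching of size six or seven contains some $M_5$, so Theorem~\ref{M5} applies. Your additional care --- observing that a forbidden triple $\sigma_j$ inside the size-five submatching also lies inside the larger matching, so that Lemma~\ref{forbidden} can be invoked on $M_6$ or $M_7$ directly, and then iterating the strict size reduction until Theorem~\ref{M5} takes over --- makes explicit a step the paper leaves implicit, since reducibility of a submatching does not by itself reduce the supermatching.
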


In the preceding lemmas, along with the signatures $\Sigma_{1}, \Sigma_{2}$ and $\Sigma_{3}$, we get $48$ distinct automorphic type matchings of $P(7,1)$ of different sizes. However, among these 48 automorphic type matchings, some of them may be switching isomorphic to each other. Now we attempt to eliminate such switching isomorphic matchings. We have the following observations.

\begin{itemize}
\item In $\Sigma_{8}$, by resigning at $u_{0}, u_{1}$; we get a matching of size two automorphic to $\Sigma_{7}$. Therefore $\Sigma_{8} \sim \Sigma_{7}$.
\item In $\Sigma_{17}$, by resigning at $u_{1}, v_{1}, u_{2}, v_{2}$ and using the automorphism $\gamma$; we get a matching automorphic to $\Sigma_{16}$. Thus $\Sigma_{17} \sim \Sigma_{16}$.
\item In $\Sigma_{18}$, by resigning at $u_{0}, v_{0}, u_{6}, v_{6}$ and using the automorphisms $\delta_3$ followed by $\delta_5$; we get a matching automorphic to $\Sigma_{17}$. Thus $\Sigma_{18} \sim \Sigma_{17}$.
\item In $\Sigma_{20}$, by resigning at $u_{1}, u_{0}$; we get a matching automorphic to $\Sigma_{19}$. Thus $\Sigma_{20} \sim \Sigma_{19}$.
\item In $\Sigma_{21}$, by resigning at $u_{1}, u_{0}, u_{6}$; we get a matching automorphic to $\Sigma_{11}$. Thus $\Sigma_{21} \sim \Sigma_{11}$.
\item In $\Sigma_{25}$, by resigning at $u_{0}, u_{1}$, and using the automorphism $\delta_1$; we get a matching automorphic to $\Sigma_{24}$. Thus $\Sigma_{25} \sim \Sigma_{24}$.
\item In $\Sigma_{26}$, by resigning at $v_{1}, v_{0}, v_{6}, u_{1}$; we get a matching automorphic to $\Sigma_{23}$. Thus $\Sigma_{26} \sim \Sigma_{23}$.
\item In $\Sigma_{29}$, by resigning at $v_{6}, v_{0}$ and using the automorphism $\gamma$; we get a matching automorphic to $\Sigma_{24}$. Thus $\Sigma_{29} \sim \Sigma_{24}$.
\item In $\Sigma_{30}$, by resigning at $v_{6}, v_{0}$ and using the automorphism $\gamma$; we get a matching automorphic to $\Sigma_{25}$. Thus we get $\Sigma_{30} \sim \Sigma_{25}$.
\item In $\Sigma_{31}$, by resigning at $u_{0}, u_{1}$; we get a matching automorphic to $\Sigma_{16}$. Thus $\Sigma_{31} \sim \Sigma_{16}$.
\item In $\Sigma_{34}$, by resigning at $u_{2} u_{3}$; we get a matching automorphic to $\Sigma_{32}$. Thus $\Sigma_{34} \sim \Sigma_{32}$.
\item In $\Sigma_{36}$, by resigning at $u_{0}, u_{1}$ and using the automorphism $\delta_5$; we get a matching automorphic to $\Sigma_{19}$. Thus $\Sigma_{36} \sim \Sigma_{19}$.
\item In $\Sigma_{37}$, by resigning at $u_{0}, u_{1}$; we get a matching automorphic to $\Sigma_{22}$. Thus $\Sigma_{37} \sim \Sigma_{22}$.
\item In $\Sigma_{39}$, by resigning at $u_{6}, u_{0}, u_{1}$; we get a matching automorphic to $\Sigma_{33}$. Thus $\Sigma_{39} \sim \Sigma_{33}$.
\item In $\Sigma_{40}$, by resigning at $u_{0}, u_{1}, u_{4}, u_{5}, u_{6}, v_{4}, v_{5}$ and using $\gamma$; we get a matching automorphic to $\Sigma_{33}$. Thus $\Sigma_{40} \sim \Sigma_{33}$.
\item In $\Sigma_{41}$, by resigning at $u_{0}, u_{1},  u_{6}$ and using $\gamma$; we get a matching automorphic to $\Sigma_{33}$. Thus $\Sigma_{41} \sim \Sigma_{33}$.
\item In $\Sigma_{42}$, by resigning at $u_{0}, u_{1}, u_{2}, u_{3}, u_{6}, v_{4}$ and $v_{5}$; we get matching automorphic to $\Sigma_{38}$. Thus $\Sigma_{42} \sim \Sigma_{38}$.
\item In $\Sigma_{44}$, by resigning at $u_{0}, u_{5}, u_{6}, v_{5}$ and using $\delta_4$; we get a matching automorphic to $\Sigma_{46}$. Thus $\Sigma_{44} \sim \Sigma_{46}$.
\item In $\Sigma_{45}$, by resigning at $u_{0}, u_{1}$; we get a matching automorphic to $\Sigma_{42}$. Thus $\Sigma_{45} \sim \Sigma_{42}$.
\item In $\Sigma_{46}$, by resigning at $u_{2}, u_{3}$ and using $\delta_5$; we get a matching automorphic to $\Sigma_{43}$. Thus $\Sigma_{46} \sim \Sigma_{43}$.
\item In $\Sigma_{47}$, by resigning at $u_{4}, v_{5}, v_{4}, v_{3}$ and using $\gamma$; we get a matching automorphic to $\Sigma_{44}$. Thus $\Sigma_{47} \sim \Sigma_{44}$.
\end{itemize}

Hence we are left with the following matchings: 
$\Sigma_{1}, \Sigma_{2}, \Sigma_{3}, \Sigma_{4}, \Sigma_{5}, \Sigma_{6}, \Sigma_{7}, \Sigma_{8}, \Sigma_{9}, \Sigma_{10}, \Sigma_{11}, \Sigma_{12}, \Sigma_{13}, \Sigma_{14}$, 
$\Sigma_{15}, \Sigma_{18}, \Sigma_{19}, \Sigma_{22}, \Sigma_{23}, \Sigma_{25}, \Sigma_{27}, \Sigma_{32}, \Sigma_{33}, \Sigma_{36}, \Sigma_{37}, \Sigma_{41}$ and 
$\Sigma_{47}$. The corresponding signed graphs of these matchings are shown in Figure~\ref{DS7}, where the label of the vertices correspond to that of Figure~\ref{P7}.

\begin{theorem}
There are exactly $27$ different signed $P(7,1)$ upto switching isomorphism.
\end{theorem}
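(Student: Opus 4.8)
The plan is to follow the template already established for $P(3,1)$ and $P(5,1)$. All the preparatory work is in place: Lemmas~\ref{MS7.2}--\ref{MS7.4}, Theorem~\ref{M5} together with its corollary, and the displayed list of switching equivalences show that every signature of $P(7,1)$ is switching isomorphic to one of the twenty-seven matchings collected in Figure~\ref{DS7}. This already yields that there are \emph{at most} twenty-seven signed $P(7,1)$ up to switching isomorphism, so the entire force of the theorem lies in proving that these twenty-seven are pairwise \emph{not} switching isomorphic.

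For the separation I would use a numerical invariant that is constant on switching-isomorphism classes. By Theorem~\ref{Signature} switching preserves the set of unbalanced (negative) cycles, and a graph automorphism carries cycles to cycles of the same length while preserving their signs; hence, for each even length $2l$ with $2\le l\le 7$ and for the odd length $7$, the number of negative cycles of that length is a switching-isomorphism invariant. By Theorem~\ref{cycle} these cycles are completely catalogued --- seven of each even length $4,6,8,10,12,14$ and exactly two of length $7$ --- and every even cycle is one of the explicit ladder cycles $u_{i}v_{i}v_{i+1}\ldots v_{i+(l-1)}u_{i+(l-1)}u_{i+(l-2)}\ldots u_{i+1}u_{i}$, so for a given matching one simply reads off the parity of the number of its edges lying on each such cycle, while the two $7$-cycles are negative precisely when the matching meets the outer (respectively inner) cycle in an odd number of edges. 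First I would compute, for each of the twenty-seven signed graphs, the tuple consisting of the number of negative $4$-, $6$-, $7$- and $8$-cycles, recording the results in a table exactly as in Tables~\ref{table1} and~\ref{table2}. If every row of this table is distinct, then by Theorem~\ref{Signature} no two of the twenty-seven signatures share the same set of unbalanced cycles, even after an automorphism, and the proof is complete.

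The main obstacle is purely combinatorial bookkeeping: there is no a priori guarantee that the four coarse invariants (negative $4$-, $6$-, $7$-, $8$-cycle counts) already distinguish all twenty-seven classes. Should two rows coincide, I would refine the invariant by appending the numbers of negative $10$-, $12$- and $14$-cycles, which are available since $P(7,1)$ carries cycles up to length $14$; because the negative-cycle counts of \emph{all} lengths together determine the switching class (again by Theorem~\ref{Signature}), this refinement is guaranteed to break any remaining tie between genuinely non-isomorphic signatures. The delicate point is therefore not the method but the accuracy of the enumeration --- correctly deciding, for each of the twenty-seven matchings and each relevant ladder cycle, whether that cycle carries an odd number of matching edges --- and, where the short-cycle data is inconclusive, pinpointing the least additional length needed to separate the classes.
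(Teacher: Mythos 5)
Your proposal is correct and takes essentially the same route as the paper: the paper likewise tabulates the numbers of negative $4$-, $6$-, $7$- and $8$-cycles for the twenty-seven signed graphs (Tables~\ref{table3a} and~\ref{table3b}) and concludes pairwise non-isomorphism from Theorem~\ref{Signature}, and these four counts do turn out to be distinct, so your fallback to $10$-, $12$- and $14$-cycles is never needed. One small caution: your side remark that the negative-cycle counts of all lengths are ``guaranteed'' to separate non-isomorphic classes overstates Theorem~\ref{Signature}, which concerns the \emph{set} of unbalanced cycles rather than their counts per length — equal counts in principle need not imply switching isomorphism — but since you only use the valid direction (different counts imply non-isomorphic), this does not affect your argument.
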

\begin{proof}
Let $|C^{-}_{4}|, |C^{-}_{6}|, |C^{-}_{7}|$ and $|C^{-}_{8}|$ denote the number of negative $4$-cycles, negative $6$-cycles, negative $7$-cycles and negative $8$-cycles of a signed graph. These numbers for the signed graphs shown in Figure~\ref{DS7} are given in Table~\ref{table3a} and Table~\ref{table3b}. From Table~\ref{table3a}, Table~\ref{table3b} and  Theorem~\ref{Signature}, it is clear that all these 27 signed $P(7,1)$ are non-isomorphic. This concludes the proof of theorem.
\end{proof}

\begin{table}[h]
\begin{center}
\begin{tabular}{ |c|c|c|c|c|c|c|c|c|c|c|c|c|c|c| } 
 \hline
  &  $\Sigma_{1}$ & $\Sigma_{2}$ & $\Sigma_{3}$ & $\Sigma_{4}$ & $\Sigma_{5}$ & $\Sigma_{6}$ & 
 $\Sigma_{7}$ & $\Sigma_{9}$ & $\Sigma_{10}$ & 
 $\Sigma_{11}$ & $\Sigma_{12}$ & $\Sigma_{13}$ & 
 $\Sigma_{14}$ & $\Sigma_{15}$  \\ 
 \hline
$|C^{-}_{4}|$   & 0 & 1 & 2 & 0 & 2 & 3 & 2 & 2 & 3 & 4 & 2 & 2 & 3 & 4 \\
 \hline 
$|C^{-}_{6}|$   & 0 & 2 & 2 & 0 & 2 & 2 & 4 & 4 & 4 & 2 & 4 & 4 & 4 & 4 \\ 
 \hline 
$|C^{-}_{7}|$   & 0 & 1 & 0 & 2 & 2 & 1 & 0 & 2 & 1 & 0 & 0 & 2 & 1 & 0\\ 
 \hline
$|C^{-}_{8}|$  & 0 & 3 & 2 & 0 & 2 & 3 & 4 & 4 & 3 & 4 & 6 & 6 & 5 & 2 \\ 
 \hline
 \end{tabular}
\end{center}
\caption{ The number of negative 4, 6, 7, and 8-cycles of some signed graphs of Figure~\ref{DS7}.}
\label{table3a}
\end{table}
\hfill
\begin{table}[h]
\begin{center}
\begin{tabular}{ |c|c|c|c|c|c|c|c|c|c|c|c|c|c|c| } 
 \hline
  &  $\Sigma_{16}$ & $\Sigma_{19}$ & 
 $\Sigma_{22}$ & $\Sigma_{23}$ & $\Sigma_{24}$ & 
 $\Sigma_{27}$ & $\Sigma_{28}$ & $\Sigma_{32}$ & 
 $\Sigma_{33}$ & $\Sigma_{35}$ & $\Sigma_{38}$ & 
 $\Sigma_{43}$ & $\Sigma_{47}$  \\ 
 \hline
$|C^{-}_{4}|$   & 3 & 4 & 4 & 4 & 4 & 4 & 4 & 5 & 5 & 5 & 6 & 6 & 7 \\
 \hline 
$|C^{-}_{6}|$   & 6 & 4 & 6 & 2 & 4 & 4 & 6 & 2 & 4 & 4 & 2 & 2 & 0 \\ 
 \hline 
$|C^{-}_{7}|$   & 1 & 0 & 0 & 2 & 2 & 2 & 2 & 1 & 1 & 1 & 0 & 2 & 1 \\ 
 \hline
$|C^{-}_{8}|$  & 5 & 4 & 2 & 4 & 4 & 2 & 2 & 5 & 3 & 1 & 4 & 4 & 6 \\ 
 \hline 
\end{tabular}
\end{center}
\caption{The number of negative 4, 6, 7 and 8-cycles of some signed graphs of Figure~\ref{DS7}.}
\label{table3b}
\end{table}
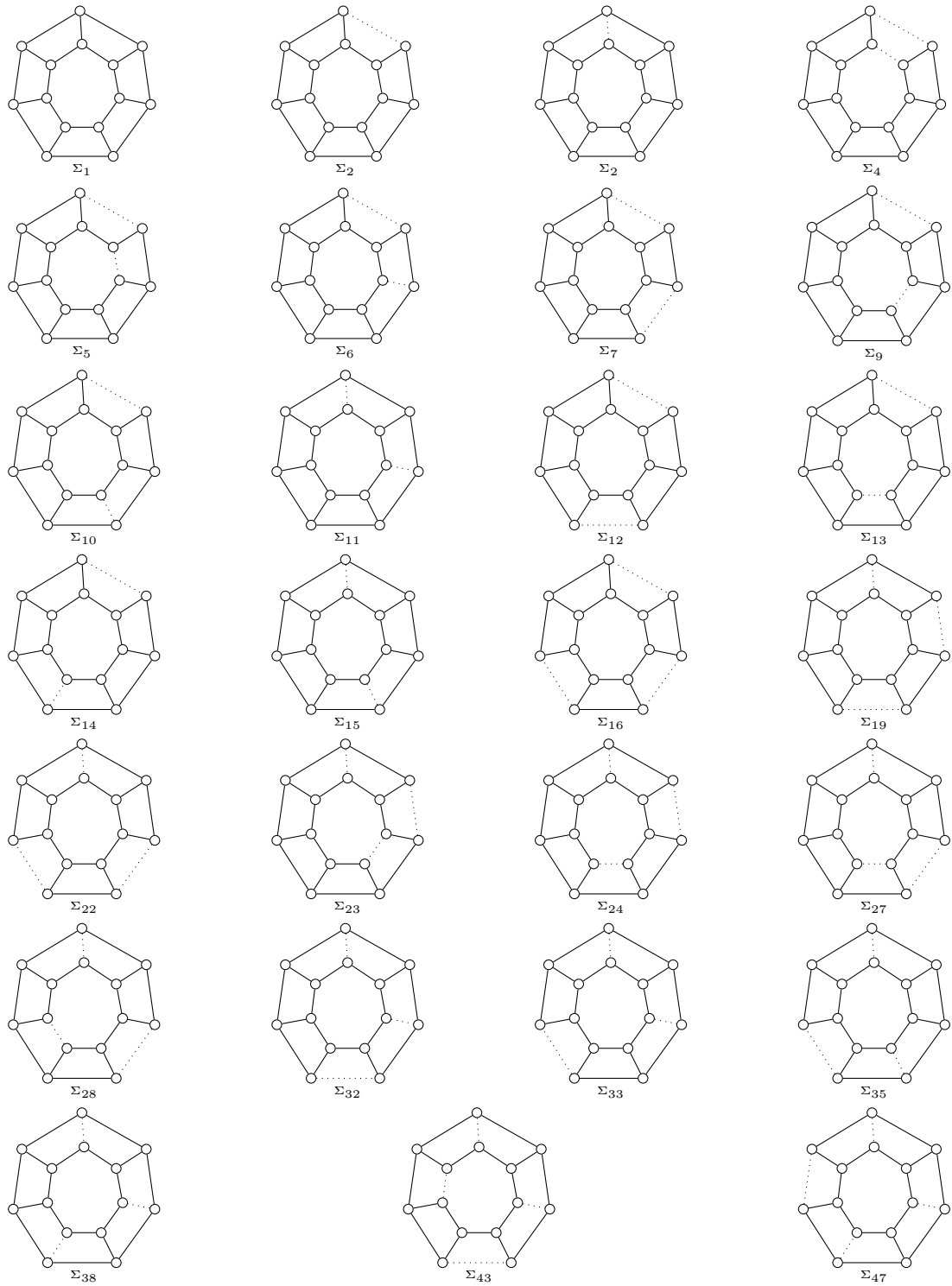
\begin{figure}
\begin{subfigure}{0.24\textwidth}
\begin{tikzpicture}[scale=0.32]
\node[vertex] (v1) at (11.9,7) {};\node[vertex] (v2) at (14.9,5.3) {};\node[vertex] (v3) at (15.3,2.5) {};\node[vertex] (v4) at (13.5,0) {};
\node[vertex] (v5) at (10.3,0) {};\node[vertex] (v6) at (8.7,2.5) {};\node[vertex] (v7) at (9.1,5.3) {};\node[vertex] (v8) at (12.0,5.4) {};
\node[vertex] (v9) at (13.5,4.4) {};\node[vertex] (v10) at (13.8,2.8){};\node[vertex] (v11) at (12.8,1.4) {};\node[vertex] (v12) at (11.2,1.4) {};
\node[vertex] (v13) at (10.3,2.8) {};\node[vertex] (v14) at (10.5,4.4) {};\node [below] at (12,0) {\tiny{$\Sigma_{1}$}};

\foreach \from/\to in {v1/v2,v2/v3,v3/v4,v4/v5,v5/v6,v6/v7,v7/v1,v8/v9,v9/v10,v10/v11,v11/v12,v12/v13,v13/v14,v14/v8,v1/v8,v2/v9,v3/v10,v4/v11,v5/v12,v6/v13,v7/v14} \draw (\from) -- (\to);

\end{tikzpicture}
\end{subfigure}
\hfill
\begin{subfigure}{0.24\textwidth}
\begin{tikzpicture}[scale=0.32]
\node[vertex] (v1) at (11.9,7) {};\node[vertex] (v2) at (14.9,5.3) {};\node[vertex] (v3) at (15.3,2.5) {};\node[vertex] (v4) at (13.5,0) {};
\node[vertex] (v5) at (10.3,0) {};\node[vertex] (v6) at (8.7,2.5) {};\node[vertex] (v7) at (9.1,5.3) {};\node[vertex] (v8) at (12.0,5.4) {};
\node[vertex] (v9) at (13.5,4.4) {};\node[vertex] (v10) at (13.8,2.8){};\node[vertex] (v11) at (12.8,1.4) {};\node[vertex] (v12) at (11.2,1.4) {};
\node[vertex] (v13) at (10.3,2.8) {};\node[vertex] (v14) at (10.5,4.4) {};\node [below] at (12,0) {\tiny{$\Sigma_{2}$}};

\draw [dotted] (v1) -- (v2);
\foreach \from/\to in {v2/v3,v3/v4,v4/v5,v5/v6,v6/v7,v7/v1,v8/v9,v9/v10,v10/v11,v11/v12,v12/v13,v13/v14,v14/v8,v1/v8,v2/v9,v3/v10,v4/v11,v5/v12,v6/v13,v7/v14} \draw (\from) -- (\to);

\end{tikzpicture}
\end{subfigure}
\hfill
\begin{subfigure}{0.24\textwidth}
\begin{tikzpicture}[scale=0.32]
\node[vertex] (v1) at (11.9,7) {};\node[vertex] (v2) at (14.9,5.3) {};\node[vertex] (v3) at (15.3,2.5) {};\node[vertex] (v4) at (13.5,0) {};
\node[vertex] (v5) at (10.3,0) {};\node[vertex] (v6) at (8.7,2.5) {};\node[vertex] (v7) at (9.1,5.3) {};\node[vertex] (v8) at (12.0,5.4) {};
\node[vertex] (v9) at (13.5,4.4) {};\node[vertex] (v10) at (13.8,2.8){};\node[vertex] (v11) at (12.8,1.4) {};\node[vertex] (v12) at (11.2,1.4) {};
\node[vertex] (v13) at (10.3,2.8) {};\node[vertex] (v14) at (10.5,4.4) {};\node [below] at (12,0) {\tiny{$\Sigma_{2}$}};

\draw [dotted] (v1) -- (v8);
\foreach \from/\to in {v1/v2,v2/v3,v3/v4,v4/v5,v5/v6,v6/v7,v7/v1,v8/v9,v9/v10,v10/v11,v11/v12,v12/v13,v13/v14,v14/v8,v2/v9,v3/v10,v4/v11,v5/v12,v6/v13,v7/v14} \draw (\from) -- (\to);

\end{tikzpicture}
\end{subfigure}
\hfill
\begin{subfigure}{0.24\textwidth}
\begin{tikzpicture}[scale=0.32]
\node[vertex] (v1) at (11.9,7) {};\node[vertex] (v2) at (14.9,5.3) {};\node[vertex] (v3) at (15.3,2.5) {};\node[vertex] (v4) at (13.5,0) {};
\node[vertex] (v5) at (10.3,0) {};\node[vertex] (v6) at (8.7,2.5) {};\node[vertex] (v7) at (9.1,5.3) {};\node[vertex] (v8) at (12.0,5.4) {};
\node[vertex] (v9) at (13.5,4.4) {};\node[vertex] (v10) at (13.8,2.8){};\node[vertex] (v11) at (12.8,1.4) {};\node[vertex] (v12) at (11.2,1.4) {};
\node[vertex] (v13) at (10.3,2.8) {};\node[vertex] (v14) at (10.5,4.4) {};\node [below] at (12,0) {\tiny{$\Sigma_{4}$}};

\draw [dotted] (v1) -- (v2);\draw [dotted] (v8) -- (v9);
\foreach \from/\to in {v2/v3,v3/v4,v4/v5,v5/v6,v6/v7,v7/v1,v9/v10,v10/v11,v11/v12,v12/v13,v13/v14,v14/v8,v1/v8,v2/v9,v3/v10,v4/v11,v5/v12,v6/v13,v7/v14} \draw (\from) -- (\to);

\end{tikzpicture}
\end{subfigure}
\hfill
\begin{subfigure}{0.24\textwidth}
\begin{tikzpicture}[scale=0.32]
\node[vertex] (v1) at (11.9,7) {};\node[vertex] (v2) at (14.9,5.3) {};\node[vertex] (v3) at (15.3,2.5) {};\node[vertex] (v4) at (13.5,0) {};
\node[vertex] (v5) at (10.3,0) {};\node[vertex] (v6) at (8.7,2.5) {};\node[vertex] (v7) at (9.1,5.3) {};\node[vertex] (v8) at (12.0,5.4) {};
\node[vertex] (v9) at (13.5,4.4) {};\node[vertex] (v10) at (13.8,2.8){};\node[vertex] (v11) at (12.8,1.4) {};\node[vertex] (v12) at (11.2,1.4) {};
\node[vertex] (v13) at (10.3,2.8) {};\node[vertex] (v14) at (10.5,4.4) {};\node [below] at (12,0) {\tiny{$\Sigma_{5}$}};

\draw [dotted] (v1) -- (v2);\draw [dotted] (v10) -- (v9);
\foreach \from/\to in {v2/v3,v3/v4,v4/v5,v5/v6,v6/v7,v7/v1,v8/v9,v10/v11,v11/v12,v12/v13,v13/v14,v14/v8,v1/v8,v2/v9,v3/v10,v4/v11,v5/v12,v6/v13,v7/v14} \draw (\from) -- (\to);

\end{tikzpicture}
\end{subfigure}
\hfill
\begin{subfigure}{0.24\textwidth}
\begin{tikzpicture}[scale=0.32]
\node[vertex] (v1) at (11.9,7) {};\node[vertex] (v2) at (14.9,5.3) {};\node[vertex] (v3) at (15.3,2.5) {};\node[vertex] (v4) at (13.5,0) {};
\node[vertex] (v5) at (10.3,0) {};\node[vertex] (v6) at (8.7,2.5) {};\node[vertex] (v7) at (9.1,5.3) {};\node[vertex] (v8) at (12.0,5.4) {};
\node[vertex] (v9) at (13.5,4.4) {};\node[vertex] (v10) at (13.8,2.8){};\node[vertex] (v11) at (12.8,1.4) {};\node[vertex] (v12) at (11.2,1.4) {};
\node[vertex] (v13) at (10.3,2.8) {};\node[vertex] (v14) at (10.5,4.4) {};\node [below] at (12,0) {\tiny{$\Sigma_{6}$}};

\draw [dotted] (v1) -- (v2);\draw [dotted] (v10) -- (v3);
\foreach \from/\to in {v2/v3,v3/v4,v4/v5,v5/v6,v6/v7,v7/v1,v8/v9,v9/v10,v10/v11,v11/v12,v12/v13,v13/v14,v14/v8,v1/v8,v2/v9,v4/v11,v5/v12,v6/v13,v7/v14} \draw (\from) -- (\to);

\end{tikzpicture}
\end{subfigure}
\hfill
\begin{subfigure}{0.24\textwidth}
\begin{tikzpicture}[scale=0.32]
\node[vertex] (v1) at (11.9,7) {};\node[vertex] (v2) at (14.9,5.3) {};\node[vertex] (v3) at (15.3,2.5) {};\node[vertex] (v4) at (13.5,0) {};
\node[vertex] (v5) at (10.3,0) {};\node[vertex] (v6) at (8.7,2.5) {};\node[vertex] (v7) at (9.1,5.3) {};\node[vertex] (v8) at (12.0,5.4) {};
\node[vertex] (v9) at (13.5,4.4) {};\node[vertex] (v10) at (13.8,2.8){};\node[vertex] (v11) at (12.8,1.4) {};\node[vertex] (v12) at (11.2,1.4) {};
\node[vertex] (v13) at (10.3,2.8) {};\node[vertex] (v14) at (10.5,4.4) {};\node [below] at (12,0) {\tiny{$\Sigma_{7}$}};

\draw [dotted] (v1) -- (v2);\draw [dotted] (v3) -- (v4);
\foreach \from/\to in {v2/v3,v4/v5,v5/v6,v6/v7,v7/v1,v8/v9,v9/v10,v10/v11,v11/v12,v12/v13,v13/v14,v14/v8,v1/v8,v2/v9,v3/v10,v4/v11,v5/v12,v6/v13,v7/v14} \draw (\from) -- (\to);

\end{tikzpicture}
\end{subfigure}
\hfill
\begin{subfigure}{0.24\textwidth}
\begin{tikzpicture}[scale=0.33]
\node[vertex] (v1) at (11.9,7) {};\node[vertex] (v2) at (14.9,5.3) {};\node[vertex] (v3) at (15.3,2.5) {};\node[vertex] (v4) at (13.5,0) {};
\node[vertex] (v5) at (10.3,0) {};\node[vertex] (v6) at (8.7,2.5) {};\node[vertex] (v7) at (9.1,5.3) {};\node[vertex] (v8) at (12.0,5.4) {};
\node[vertex] (v9) at (13.5,4.4) {};\node[vertex] (v10) at (13.8,2.8){};\node[vertex] (v11) at (12.8,1.4) {};\node[vertex] (v12) at (11.2,1.4) {};
\node[vertex] (v13) at (10.3,2.8) {};\node[vertex] (v14) at (10.5,4.4) {};\node [below] at (12,0) {\tiny{$\Sigma_{9}$}};

\draw [dotted] (v1) -- (v2);\draw [dotted] (v10) -- (v11);
\foreach \from/\to in {v2/v3,v3/v4,v4/v5,v5/v6,v6/v7,v7/v1,v8/v9,v9/v10,v11/v12,v12/v13,v13/v14,v14/v8,v1/v8,v2/v9,v3/v10,v4/v11,v5/v12,v6/v13,v7/v14} \draw (\from) -- (\to);

\end{tikzpicture}
\end{subfigure}
\hfill
\begin{subfigure}{0.24\textwidth}
\begin{tikzpicture}[scale=0.33]
\node[vertex] (v1) at (11.9,7) {};\node[vertex] (v2) at (14.9,5.3) {};\node[vertex] (v3) at (15.3,2.5) {};\node[vertex] (v4) at (13.5,0) {};
\node[vertex] (v5) at (10.3,0) {};\node[vertex] (v6) at (8.7,2.5) {};\node[vertex] (v7) at (9.1,5.3) {};\node[vertex] (v8) at (12.0,5.4) {};
\node[vertex] (v9) at (13.5,4.4) {};\node[vertex] (v10) at (13.8,2.8){};\node[vertex] (v11) at (12.8,1.4) {};\node[vertex] (v12) at (11.2,1.4) {};
\node[vertex] (v13) at (10.3,2.8) {};\node[vertex] (v14) at (10.5,4.4) {};\node [below] at (12,0) {\tiny{$\Sigma_{10}$}};

\draw [dotted] (v1) -- (v2);\draw [dotted] (v4) -- (v11);
\foreach \from/\to in {v2/v3,v3/v4,v4/v5,v5/v6,v6/v7,v7/v1,v8/v9,v9/v10,v10/v11,v11/v12,v12/v13,v13/v14,v14/v8,v1/v8,v2/v9,v3/v10,v5/v12,v6/v13,v7/v14} \draw (\from) -- (\to);

\end{tikzpicture}
\end{subfigure}
\hfill
\begin{subfigure}{0.24\textwidth}
\begin{tikzpicture}[scale=0.33]
\node[vertex] (v1) at (11.9,7) {};\node[vertex] (v2) at (14.9,5.3) {};\node[vertex] (v3) at (15.3,2.5) {};\node[vertex] (v4) at (13.5,0) {};
\node[vertex] (v5) at (10.3,0) {};\node[vertex] (v6) at (8.7,2.5) {};\node[vertex] (v7) at (9.1,5.3) {};\node[vertex] (v8) at (12.0,5.4) {};
\node[vertex] (v9) at (13.5,4.4) {};\node[vertex] (v10) at (13.8,2.8){};\node[vertex] (v11) at (12.8,1.4) {};\node[vertex] (v12) at (11.2,1.4) {};
\node[vertex] (v13) at (10.3,2.8) {};\node[vertex] (v14) at (10.5,4.4) {};\node [below] at (12,0) {\tiny{$\Sigma_{11}$}};

\draw [dotted] (v1) -- (v8);\draw [dotted] (v10) -- (v3);
\foreach \from/\to in {v1/v2,v2/v3,v3/v4,v4/v5,v5/v6,v6/v7,v7/v1,v8/v9,v9/v10,v10/v11,v11/v12,v12/v13,v13/v14,v14/v8,v2/v9,v4/v11,v5/v12,v6/v13,v7/v14} \draw (\from) -- (\to);

\end{tikzpicture}
\end{subfigure}
\hfill
\begin{subfigure}{0.24\textwidth}
\begin{tikzpicture}[scale=0.33]
\node[vertex] (v1) at (11.9,7) {};\node[vertex] (v2) at (14.9,5.3) {};\node[vertex] (v3) at (15.3,2.5) {};\node[vertex] (v4) at (13.5,0) {};
\node[vertex] (v5) at (10.3,0) {};\node[vertex] (v6) at (8.7,2.5) {};\node[vertex] (v7) at (9.1,5.3) {};\node[vertex] (v8) at (12.0,5.4) {};
\node[vertex] (v9) at (13.5,4.4) {};\node[vertex] (v10) at (13.8,2.8){};\node[vertex] (v11) at (12.8,1.4) {};\node[vertex] (v12) at (11.2,1.4) {};
\node[vertex] (v13) at (10.3,2.8) {};\node[vertex] (v14) at (10.5,4.4) {};\node [below] at (12,0) {\tiny{$\Sigma_{12}$}};

\draw [dotted] (v1) -- (v2);\draw [dotted] (v4) -- (v5);
\foreach \from/\to in {v2/v3,v3/v4,v5/v6,v6/v7,v7/v1,v8/v9,v9/v10,v10/v11,v11/v12,v12/v13,v13/v14,v14/v8,v1/v8,v2/v9,v3/v10,v4/v11,v5/v12,v6/v13,v7/v14} \draw (\from) -- (\to);

\end{tikzpicture}
\end{subfigure}
\hfill
\begin{subfigure}{0.24\textwidth}
\begin{tikzpicture}[scale=0.33]
\node[vertex] (v1) at (11.9,7) {};\node[vertex] (v2) at (14.9,5.3) {};\node[vertex] (v3) at (15.3,2.5) {};\node[vertex] (v4) at (13.5,0) {};
\node[vertex] (v5) at (10.3,0) {};\node[vertex] (v6) at (8.7,2.5) {};\node[vertex] (v7) at (9.1,5.3) {};\node[vertex] (v8) at (12.0,5.4) {};
\node[vertex] (v9) at (13.5,4.4) {};\node[vertex] (v10) at (13.8,2.8){};\node[vertex] (v11) at (12.8,1.4) {};\node[vertex] (v12) at (11.2,1.4) {};
\node[vertex] (v13) at (10.3,2.8) {};\node[vertex] (v14) at (10.5,4.4) {};\node [below] at (12,0) {\tiny{$\Sigma_{13}$}};

\draw [dotted] (v1) -- (v2);\draw [dotted] (v11) -- (v12);
\foreach \from/\to in {v2/v3,v3/v4,v4/v5,v5/v6,v6/v7,v7/v1,v8/v9,v9/v10,v10/v11,v13/v12,v13/v14,v14/v8,v1/v8,v2/v9,v3/v10,v4/v11,v5/v12,v6/v13,v7/v14} \draw (\from) -- (\to);

\end{tikzpicture}
\end{subfigure}
\hfill
\begin{subfigure}{0.24\textwidth}
\begin{tikzpicture}[scale=0.33]
\node[vertex] (v1) at (11.9,7) {};\node[vertex] (v2) at (14.9,5.3) {};\node[vertex] (v3) at (15.3,2.5) {};\node[vertex] (v4) at (13.5,0) {};
\node[vertex] (v5) at (10.3,0) {};\node[vertex] (v6) at (8.7,2.5) {};\node[vertex] (v7) at (9.1,5.3) {};\node[vertex] (v8) at (12.0,5.4) {};
\node[vertex] (v9) at (13.5,4.4) {};\node[vertex] (v10) at (13.8,2.8){};\node[vertex] (v11) at (12.8,1.4) {};\node[vertex] (v12) at (11.2,1.4) {};
\node[vertex] (v13) at (10.3,2.8) {};\node[vertex] (v14) at (10.5,4.4) {};\node [below] at (12,0) {\tiny{$\Sigma_{14}$}};

\draw [dotted] (v1) -- (v2);\draw [dotted] (v5) -- (v12);
\foreach \from/\to in {v2/v3,v3/v4,v4/v5,v5/v6,v6/v7,v7/v1,v8/v9,v9/v10,v10/v11,v11/v12,v12/v13,v13/v14,v14/v8,v1/v8,v2/v9,v3/v10,v4/v11,v6/v13,v7/v14} \draw (\from) -- (\to);

\end{tikzpicture}
\end{subfigure}
\hfill
\begin{subfigure}{0.24\textwidth}
\begin{tikzpicture}[scale=0.33]
\node[vertex] (v1) at (11.9,7) {};\node[vertex] (v2) at (14.9,5.3) {};\node[vertex] (v3) at (15.3,2.5) {};\node[vertex] (v4) at (13.5,0) {};
\node[vertex] (v5) at (10.3,0) {};\node[vertex] (v6) at (8.7,2.5) {};\node[vertex] (v7) at (9.1,5.3) {};\node[vertex] (v8) at (12.0,5.4) {};
\node[vertex] (v9) at (13.5,4.4) {};\node[vertex] (v10) at (13.8,2.8){};\node[vertex] (v11) at (12.8,1.4) {};\node[vertex] (v12) at (11.2,1.4) {};
\node[vertex] (v13) at (10.3,2.8) {};\node[vertex] (v14) at (10.5,4.4) {};\node [below] at (12,0) {\tiny{$\Sigma_{15}$}};

\draw [dotted] (v1) -- (v8);\draw [dotted] (v11) -- (v4);
\foreach \from/\to in {v1/v2,v2/v3,v3/v4,v4/v5,v5/v6,v6/v7,v7/v1,v8/v9,v9/v10,v10/v11,v11/v12,v12/v13,v13/v14,v14/v8,v2/v9,v3/v10,v5/v12,v6/v13,v7/v14} \draw (\from) -- (\to);

\end{tikzpicture}
\end{subfigure}
\hfill
\begin{subfigure}{0.24\textwidth}
\begin{tikzpicture}[scale=0.33]
\node[vertex] (v1) at (11.9,7) {};\node[vertex] (v2) at (14.9,5.3) {};\node[vertex] (v3) at (15.3,2.5) {};\node[vertex] (v4) at (13.5,0) {};
\node[vertex] (v5) at (10.3,0) {};\node[vertex] (v6) at (8.7,2.5) {};\node[vertex] (v7) at (9.1,5.3) {};\node[vertex] (v8) at (12.0,5.4) {};
\node[vertex] (v9) at (13.5,4.4) {};\node[vertex] (v10) at (13.8,2.8){};\node[vertex] (v11) at (12.8,1.4) {};\node[vertex] (v12) at (11.2,1.4) {};
\node[vertex] (v13) at (10.3,2.8) {};\node[vertex] (v14) at (10.5,4.4) {};\node [below] at (12,0) {\tiny{$\Sigma_{16}$}};

\draw [dotted] (v1) -- (v2);\draw [dotted] (v3) -- (v4);\draw [dotted] (v5) -- (v6);
\foreach \from/\to in {v2/v3,v4/v5,v6/v7,v7/v1,v8/v9,v9/v10,v10/v11,v11/v12,v12/v13,v13/v14,v14/v8,v1/v8,v2/v9,v3/v10,v4/v11,v5/v12,v6/v13,v7/v14} \draw (\from) -- (\to);

\end{tikzpicture}
\end{subfigure}
\hfill
\begin{subfigure}{0.24\textwidth}
\begin{tikzpicture}[scale=0.33]
\node[vertex] (v1) at (11.9,7) {};\node[vertex] (v2) at (14.9,5.3) {};\node[vertex] (v3) at (15.3,2.5) {};\node[vertex] (v4) at (13.5,0) {};
\node[vertex] (v5) at (10.3,0) {};\node[vertex] (v6) at (8.7,2.5) {};\node[vertex] (v7) at (9.1,5.3) {};\node[vertex] (v8) at (12.0,5.4) {};
\node[vertex] (v9) at (13.5,4.4) {};\node[vertex] (v10) at (13.8,2.8){};\node[vertex] (v11) at (12.8,1.4) {};\node[vertex] (v12) at (11.2,1.4) {};
\node[vertex] (v13) at (10.3,2.8) {};\node[vertex] (v14) at (10.5,4.4) {};\node [below] at (12,0) {\tiny{$\Sigma_{19}$}};

\draw [dotted] (v1) -- (v8);\draw [dotted] (v2) -- (v3);\draw [dotted] (v5) -- (v4);
\foreach \from/\to in {v2/v1,v4/v3,v5/v6,v6/v7,v7/v1,v8/v9,v9/v10,v10/v11,v11/v12,v12/v13,v13/v14,v14/v8,v12/v5,v2/v9,v3/v10,v4/v11,v6/v13,v7/v14} \draw (\from) -- (\to);

\end{tikzpicture}
\end{subfigure}
\hfill
\begin{subfigure}{0.24\textwidth}
\begin{tikzpicture}[scale=0.33]
\node[vertex] (v1) at (11.9,7) {};\node[vertex] (v2) at (14.9,5.3) {};\node[vertex] (v3) at (15.3,2.5) {};\node[vertex] (v4) at (13.5,0) {};
\node[vertex] (v5) at (10.3,0) {};\node[vertex] (v6) at (8.7,2.5) {};\node[vertex] (v7) at (9.1,5.3) {};\node[vertex] (v8) at (12.0,5.4) {};
\node[vertex] (v9) at (13.5,4.4) {};\node[vertex] (v10) at (13.8,2.8){};\node[vertex] (v11) at (12.8,1.4) {};\node[vertex] (v12) at (11.2,1.4) {};
\node[vertex] (v13) at (10.3,2.8) {};\node[vertex] (v14) at (10.5,4.4) {};\node [below] at (12,0) {\tiny{$\Sigma_{22}$}};

\draw [dotted] (v1) -- (v8);\draw [dotted] (v4) -- (v3);\draw [dotted] (v5) -- (v6);
\foreach \from/\to in {v2/v1,v3/v10,v11/v12,v5/v4,v6/v7,v7/v1,v8/v9,v9/v10,v10/v11,v12/v13,v13/v14,v14/v8,v2/v3,v2/v9,v4/v11,v5/v12,v6/v13,v7/v14} \draw (\from) -- (\to);

\end{tikzpicture}
\end{subfigure}
\hfill
\begin{subfigure}{0.24\textwidth}
\begin{tikzpicture}[scale=0.33]
\node[vertex] (v1) at (11.9,7) {};\node[vertex] (v2) at (14.9,5.3) {};\node[vertex] (v3) at (15.3,2.5) {};\node[vertex] (v4) at (13.5,0) {};
\node[vertex] (v5) at (10.3,0) {};\node[vertex] (v6) at (8.7,2.5) {};\node[vertex] (v7) at (9.1,5.3) {};\node[vertex] (v8) at (12.0,5.4) {};
\node[vertex] (v9) at (13.5,4.4) {};\node[vertex] (v10) at (13.8,2.8){};\node[vertex] (v11) at (12.8,1.4) {};\node[vertex] (v12) at (11.2,1.4) {};
\node[vertex] (v13) at (10.3,2.8) {};\node[vertex] (v14) at (10.5,4.4) {};\node [below] at (12,0) {\tiny{$\Sigma_{23}$}};

\draw [dotted] (v1) -- (v8);\draw [dotted] (v2) -- (v3);\draw [dotted] (v10) -- (v11);
\foreach \from/\to in {v2/v1,v4/v3,v5/v6,v6/v7,v7/v1,v8/v9,v9/v10,v4/v5,v11/v12,v12/v13,v13/v14,v14/v8,v12/v5,v2/v9,v3/v10,v4/v11,v6/v13,v7/v14} \draw (\from) -- (\to);

\end{tikzpicture}
\end{subfigure}
\hfill
\begin{subfigure}{0.24\textwidth}
\begin{tikzpicture}[scale=0.33]
\node[vertex] (v1) at (11.9,7) {};\node[vertex] (v2) at (14.9,5.3) {};\node[vertex] (v3) at (15.3,2.5) {};\node[vertex] (v4) at (13.5,0) {};
\node[vertex] (v5) at (10.3,0) {};\node[vertex] (v6) at (8.7,2.5) {};\node[vertex] (v7) at (9.1,5.3) {};\node[vertex] (v8) at (12.0,5.4) {};
\node[vertex] (v9) at (13.5,4.4) {};\node[vertex] (v10) at (13.8,2.8){};\node[vertex] (v11) at (12.8,1.4) {};\node[vertex] (v12) at (11.2,1.4) {};
\node[vertex] (v13) at (10.3,2.8) {};\node[vertex] (v14) at (10.5,4.4) {};\node [below] at (12,0) {\tiny{$\Sigma_{24}$}};

\draw [dotted] (v1) -- (v8);\draw [dotted] (v2) -- (v3);\draw [dotted] (v11) -- (v12);
\foreach \from/\to in {v2/v1,v4/v3,v5/v6,v6/v7,v7/v1,v8/v9,v9/v10,v4/v5,v11/v10,v12/v13,v13/v14,v14/v8,v12/v5,v2/v9,v3/v10,v4/v11,v6/v13,v7/v14} \draw (\from) -- (\to);

\end{tikzpicture}
\end{subfigure}
\hfill
\begin{subfigure}{0.24\textwidth}
\begin{tikzpicture}[scale=0.33]
\node[vertex] (v1) at (11.9,7) {};\node[vertex] (v2) at (14.9,5.3) {};\node[vertex] (v3) at (15.3,2.5) {};\node[vertex] (v4) at (13.5,0) {};
\node[vertex] (v5) at (10.3,0) {};\node[vertex] (v6) at (8.7,2.5) {};\node[vertex] (v7) at (9.1,5.3) {};\node[vertex] (v8) at (12.0,5.4) {};
\node[vertex] (v9) at (13.5,4.4) {};\node[vertex] (v10) at (13.8,2.8){};\node[vertex] (v11) at (12.8,1.4) {};\node[vertex] (v12) at (11.2,1.4) {};
\node[vertex] (v13) at (10.3,2.8) {};\node[vertex] (v14) at (10.5,4.4) {};\node [below] at (12,0) {\tiny{$\Sigma_{27}$}};

\draw [dotted] (v1) -- (v8);\draw [dotted] (v4) -- (v3);\draw [dotted] (v11) -- (v12);
\foreach \from/\to in {v2/v1,v3/v10,v5/v6,v5/v4,v6/v7,v7/v1,v8/v9,v9/v10,v10/v11,v12/v13,v13/v14,v14/v8,v2/v3,v2/v9,v4/v11,v5/v12,v6/v13,v7/v14} \draw (\from) -- (\to);

\end{tikzpicture}
\end{subfigure}
\hfill
\begin{subfigure}{0.24\textwidth}
\begin{tikzpicture}[scale=0.33]
\node[vertex] (v1) at (11.9,7) {};\node[vertex] (v2) at (14.9,5.3) {};\node[vertex] (v3) at (15.3,2.5) {};\node[vertex] (v4) at (13.5,0) {};
\node[vertex] (v5) at (10.3,0) {};\node[vertex] (v6) at (8.7,2.5) {};\node[vertex] (v7) at (9.1,5.3) {};\node[vertex] (v8) at (12.0,5.4) {};
\node[vertex] (v9) at (13.5,4.4) {};\node[vertex] (v10) at (13.8,2.8){};\node[vertex] (v11) at (12.8,1.4) {};\node[vertex] (v12) at (11.2,1.4) {};
\node[vertex] (v13) at (10.3,2.8) {};\node[vertex] (v14) at (10.5,4.4) {};\node [below] at (12,0) {\tiny{$\Sigma_{28}$}};

\draw [dotted] (v1) -- (v8);\draw [dotted] (v12) -- (v13);\draw [dotted] (v3) -- (v4);
\foreach \from/\to in {v2/v1,v3/v10,v5/v6,v5/v4,v6/v7,v7/v1,v8/v9,v9/v10,v10/v11,v12/v11,v13/v14,v14/v8,v2/v3,v2/v9,v4/v11,v5/v12,v6/v13,v7/v14} \draw (\from) -- (\to);

\end{tikzpicture}
\end{subfigure}
\hfill
\begin{subfigure}{0.24\textwidth}
\begin{tikzpicture}[scale=0.33]
\node[vertex] (v1) at (11.9,7) {};\node[vertex] (v2) at (14.9,5.3) {};\node[vertex] (v3) at (15.3,2.5) {};\node[vertex] (v4) at (13.5,0) {};
\node[vertex] (v5) at (10.3,0) {};\node[vertex] (v6) at (8.7,2.5) {};\node[vertex] (v7) at (9.1,5.3) {};\node[vertex] (v8) at (12.0,5.4) {};
\node[vertex] (v9) at (13.5,4.4) {};\node[vertex] (v10) at (13.8,2.8){};\node[vertex] (v11) at (12.8,1.4) {};\node[vertex] (v12) at (11.2,1.4) {};
\node[vertex] (v13) at (10.3,2.8) {};\node[vertex] (v14) at (10.5,4.4) {};\node [below] at (12,0) {\tiny{$\Sigma_{32}$}};

\draw [dotted] (v1) -- (v8);\draw [dotted] (v10) -- (v3);\draw [dotted] (v5) -- (v4);
\foreach \from/\to in {v1/v2,v2/v3,v3/v4,v12/v13,v5/v6,v6/v7,v7/v1,v8/v9,v9/v10,v10/v11,v11/v12,v13/v14,v14/v8,v2/v9,v4/v11,v5/v12,v6/v13,v7/v14} \draw (\from) -- (\to);

\end{tikzpicture}
\end{subfigure}
\hfill
\begin{subfigure}{0.24\textwidth}
\begin{tikzpicture}[scale=0.33]
\node[vertex] (v1) at (11.9,7) {};\node[vertex] (v2) at (14.9,5.3) {};\node[vertex] (v3) at (15.3,2.5) {};\node[vertex] (v4) at (13.5,0) {};
\node[vertex] (v5) at (10.3,0) {};\node[vertex] (v6) at (8.7,2.5) {};\node[vertex] (v7) at (9.1,5.3) {};\node[vertex] (v8) at (12.0,5.4) {};
\node[vertex] (v9) at (13.5,4.4) {};\node[vertex] (v10) at (13.8,2.8){};\node[vertex] (v11) at (12.8,1.4) {};\node[vertex] (v12) at (11.2,1.4) {};
\node[vertex] (v13) at (10.3,2.8) {};\node[vertex] (v14) at (10.5,4.4) {};\node [below] at (12,0) {\tiny{$\Sigma_{33}$}};

\draw [dotted] (v1) -- (v8);\draw [dotted] (v10) -- (v3);\draw [dotted] (v5) -- (v6);
\foreach \from/\to in {v1/v2,v2/v3,v3/v4,v4/v5,v6/v7,v7/v1,v8/v9,v9/v10,v10/v11,v11/v12,v12/v13,v13/v14,v14/v8,v2/v9,v4/v11,v5/v12,v6/v13,v7/v14} \draw (\from) -- (\to);

\end{tikzpicture}
\end{subfigure}
\hfill
\begin{subfigure}{0.24\textwidth}
\begin{tikzpicture}[scale=0.33]
\node[vertex] (v1) at (11.9,7) {};\node[vertex] (v2) at (14.9,5.3) {};\node[vertex] (v3) at (15.3,2.5) {};\node[vertex] (v4) at (13.5,0) {};
\node[vertex] (v5) at (10.3,0) {};\node[vertex] (v6) at (8.7,2.5) {};\node[vertex] (v7) at (9.1,5.3) {};\node[vertex] (v8) at (12.0,5.4) {};
\node[vertex] (v9) at (13.5,4.4) {};\node[vertex] (v10) at (13.8,2.8){};\node[vertex] (v11) at (12.8,1.4) {};\node[vertex] (v12) at (11.2,1.4) {};
\node[vertex] (v13) at (10.3,2.8) {};\node[vertex] (v14) at (10.5,4.4) {};\node [below] at (12,0) {\tiny{$\Sigma_{35}$}};

\draw [dotted] (v1) -- (v8);\draw [dotted] (v5) -- (v6);\draw [dotted] (v11) -- (v4);
\foreach \from/\to in {v1/v2,v2/v3,v3/v4,v4/v5,v5/v12,v6/v7,v7/v1,v8/v9,v9/v10,v10/v11,v11/v12,v12/v13,v13/v14,v14/v8,v2/v9,v3/v10,v6/v13,v7/v14} \draw (\from) -- (\to);

\end{tikzpicture}
\end{subfigure}
\hfill
\begin{subfigure}{0.24\textwidth}
\begin{tikzpicture}[scale=0.33]
\node[vertex] (v1) at (11.9,7) {};\node[vertex] (v2) at (14.9,5.3) {};\node[vertex] (v3) at (15.3,2.5) {};\node[vertex] (v4) at (13.5,0) {};
\node[vertex] (v5) at (10.3,0) {};\node[vertex] (v6) at (8.7,2.5) {};\node[vertex] (v7) at (9.1,5.3) {};\node[vertex] (v8) at (12.0,5.4) {};
\node[vertex] (v9) at (13.5,4.4) {};\node[vertex] (v10) at (13.8,2.8){};\node[vertex] (v11) at (12.8,1.4) {};\node[vertex] (v12) at (11.2,1.4) {};
\node[vertex] (v13) at (10.3,2.8) {};\node[vertex] (v14) at (10.5,4.4) {};\node [below] at (12,0) {\tiny{$\Sigma_{38}$}};

\draw [dotted] (v1) -- (v8);\draw [dotted] (v3) -- (v10);\draw [dotted] (v5) -- (v12);
\foreach \from/\to in {v1/v2,v2/v3,v3/v4,v4/v5,v5/v6,v6/v7,v7/v1,v8/v9,v9/v10,v10/v11,v11/v12,v12/v13,v13/v14,v14/v8,v2/v9,v4/v11,v6/v13,v7/v14} \draw (\from) -- (\to);

\end{tikzpicture}
\end{subfigure}
\hfill
\begin{subfigure}{0.24\textwidth}
\begin{tikzpicture}[scale=0.33]
\node[vertex] (v1) at (11.9,7) {};\node[vertex] (v2) at (14.9,5.3) {};\node[vertex] (v3) at (15.3,2.5) {};\node[vertex] (v4) at (13.5,0) {};
\node[vertex] (v5) at (10.3,0) {};\node[vertex] (v6) at (8.7,2.5) {};\node[vertex] (v7) at (9.1,5.3) {};\node[vertex] (v8) at (12.0,5.4) {};
\node[vertex] (v9) at (13.5,4.4) {};\node[vertex] (v10) at (13.8,2.8){};\node[vertex] (v11) at (12.8,1.4) {};\node[vertex] (v12) at (11.2,1.4) {};
\node[vertex] (v13) at (10.3,2.8) {};\node[vertex] (v14) at (10.5,4.4) {};\node [below] at (12,0) {\tiny{$\Sigma_{43}$}};

\draw [dotted] (v1) -- (v8);\draw [dotted] (v10) -- (v3);\draw [dotted] (v5) -- (v4);\draw [dotted] (v13) -- (v14);
\foreach \from/\to in {v1/v2,v2/v3,v3/v4,v11/v12,v5/v6,v7/v1,v8/v9,v9/v10,v10/v11,v12/v13,v6/v7,v14/v8,v2/v9,v4/v11,v5/v12,v6/v13,v7/v14} \draw (\from) -- (\to);

\end{tikzpicture}
\end{subfigure}
\hfill
\begin{subfigure}{0.24\textwidth}
\begin{tikzpicture}[scale=0.33]
\node[vertex] (v1) at (11.9,7) {};\node[vertex] (v2) at (14.9,5.3) {};\node[vertex] (v3) at (15.3,2.5) {};\node[vertex] (v4) at (13.5,0) {};
\node[vertex] (v5) at (10.3,0) {};\node[vertex] (v6) at (8.7,2.5) {};\node[vertex] (v7) at (9.1,5.3) {};\node[vertex] (v8) at (12.0,5.4) {};
\node[vertex] (v9) at (13.5,4.4) {};\node[vertex] (v10) at (13.8,2.8){};\node[vertex] (v11) at (12.8,1.4) {};\node[vertex] (v12) at (11.2,1.4) {};
\node[vertex] (v13) at (10.3,2.8) {};\node[vertex] (v14) at (10.5,4.4) {};\node [below] at (12,0) {\tiny{$\Sigma_{47}$}};

\draw [dotted] (v1) -- (v8);\draw [dotted] (v10) -- (v3);\draw [dotted] (v6) -- (v7);\draw [dotted] (v5) -- (v12);
\foreach \from/\to in {v1/v2,v2/v3,v3/v4,v4/v5,v5/v6,v13/v14,v7/v1,v8/v9,v9/v10,v10/v11,v11/v12,v12/v13,v14/v8,v2/v9,v4/v11,v6/v13,v7/v14} \draw (\from) -- (\to);

\end{tikzpicture}\end{subfigure}
\caption{Twenty seven signed $P(7,1)$.}\label{DS7}
\end{figure}

\section{Conclusions and Remarks}\label{conclusion}
In the Sections \ref{P(3,1)}, \ref{P(5,1)} and \ref{P(7,1)}, we have found the exact number of non-isomorphic signatures of $P(3,1), P(5,1)$ and $P(7,1)$ upto switching. However this number for the general case is still unknown. So, it is natural to pose the following problem.

\begin{prob}\label{prob1}
What is the exact number of non-switching isomorphic signatures on $P(2n+1,1)$ for all $n \geq 4$ upto switching ?
\end{prob} 

We have a partial answer towards the solution of Problem~\ref{prob1}, that is, we give the answer for non-isomorphic matchings (or minimal signatures) of size two of $P(2n+1,1)$ for all $n \geq 4$. We have the following theorem.

\begin{theorem}
Upto switching isomorphism, the number of matchings of $P(2n+1)$ of size two is $4n-1$, where $n \geq 4$.
\end{theorem}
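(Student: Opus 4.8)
The plan is to classify the size-two matchings first up to automorphism, and only afterwards collapse the list under switching. Since every automorphism of $P(2n+1,1)$ is a product of the rotations $\rho_k$, the reflections $\delta_k$ and the swap $\gamma$, and since $\gamma$ interchanges the inner and outer cycles while fixing the spokes, every size-two matching lies in exactly one of four automorphism-invariant meta-types, recorded by its spokes: two non-spoke edges on the same cycle (type $OO$, together with its $\gamma$-image $II$), two non-spoke edges on different cycles (type $OI$), exactly one spoke (type $OS$, with $\gamma$-image $IS$), or two spokes (type $SS$). The number of spokes, and whether the two cycle-edges lie on the same cycle, are preserved by $\mathrm{Aut}(P(2n+1,1))$, so no automorphism can merge two distinct meta-types, and the classification reduces to counting orbits inside each meta-type.

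Within each meta-type I would fix one edge by a rotation and describe the orbit of the second edge under the stabiliser of the first. For $OO$, fixing $u_0u_1$ leaves the reflection through its midpoint, which sends the outer edge in position $i$ to the one in position $2n+1-i$; on the admissible positions $i\in\{2,\dots,2n-1\}$ this is a fixed-point-free involution, giving $n-1$ orbits, one for each distance $d\in\{2,\dots,n\}$. For $SS$, fixing $u_0v_0$ leaves $\delta_0$ (acting by $j\mapsto 2n+1-j$ on the index of the second spoke) and $\gamma$ (fixing each spoke), giving $n$ orbits, one per distance $d\in\{2,\dots,n+1\}$. For $OS$, fixing the spoke $u_0v_0$ leaves only $\{\mathrm{id},\delta_0\}$ acting on the position $j\in\{1,\dots,2n-1\}$ of the outer edge by $j\mapsto 2n-j$, which has the single fixed point $j=n$ and hence $n$ orbits. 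For $OI$, parametrising a configuration by the relative shift $s\equiv b-a \pmod{2n+1}$ of the inner edge $v_bv_{b+1}$ against the outer edge $u_au_{a+1}$, both a reflection and $\gamma$ act as $s\mapsto -s$, so the orbits are $\{0\}$ together with $\{s,-s\}$ for $s=1,\dots,n$, giving $n+1$ orbits. The totals $n-1$, $n$, $n$, $n+1$ sum to $4n$ automorphic types, matching the counts $8$ and $12$ obtained for $P(5,1)$ and $P(7,1)$.

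Next I would pass to switching. The only reduction to record is that the consecutive spokes $\{u_0v_0,u_1v_1\}$, the unique $SS$ type at distance two, switch at $u_0$ and $u_1$ to $\{u_0u_{2n},u_1u_2\}$, which for $n\ge 2$ is a genuine size-two $OO$ matching at distance two; hence these two automorphic types represent the same signed graph. I would then verify that this is the \emph{only} coincidence: no size-two matching is switching equivalent to a matching of size zero or one, and no two of the remaining $4n-1$ types are switching isomorphic. That a size-two matching is never balanced is immediate, since for two fixed edges there is always a cycle through exactly one of them; excluding a reduction to size one, and excluding all accidental identifications, is where the hypothesis $n\ge 4$ enters, to avoid the small-cycle collisions that occur in $P(3,1)$ (where consecutive spokes collapse all the way to a single spoke).

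To separate the surviving $4n-1$ classes I would use the switching invariant $|C_{2l}^-|$, the number of unbalanced $2l$-cycles, which by Theorem~\ref{Signature} depends only on the switching-isomorphism class. For a size-two matching $\{e_1,e_2\}$ a cycle is unbalanced iff it passes through exactly one of $e_1,e_2$, so $|C_{2l}^-| = a_l(e_1) + a_l(e_2) - 2b_l$, where $a_l(e)$ counts the $2l$-cycles through $e$ (a quantity depending only on whether $e$ is a spoke or a cycle-edge, via Theorem~\ref{cycle} and the explicit even cycles $u_iv_iv_{i+1}\cdots v_{i+l-1}u_{i+l-1}\cdots u_{i+1}u_i$) and $b_l$ counts the $2l$-cycles through both edges. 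The numbers $a_l$ recover the meta-type, while the cross-term $b_l$ vanishes until $2l$ is large enough to enclose both edges and thereafter encodes their distance; thus the profile $\bigl(|C_4^-|,|C_6^-|,\dots\bigr)$ recovers the pair (meta-type, distance) for every type, so distinct types have distinct profiles, the sole exception being the switching-equivalent pair $SS$-distance-$2$ and $OO$-distance-$2$, which necessarily share a profile. This yields exactly $4n-1$ classes. The main obstacle is this last step: computing the cross-term $b_l$ uniformly in $n$ and confirming that the resulting profiles are pairwise distinct across all $4n-1$ types (equivalently, that meta-type and distance are always jointly recoverable), which is the general-$n$ analogue of Tables~\ref{table2}, \ref{table3a} and \ref{table3b}.
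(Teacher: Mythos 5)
Your proposal is correct and reaches the paper's count by the same overall strategy --- classify size-two matchings up to automorphism, perform a single switching collapse, and separate the survivors by counts of unbalanced cycles via Theorem~\ref{Signature} --- but with genuinely different bookkeeping. The paper stratifies by the distance $d\in\{2,\dots,n+1\}$ between the two edges (via Theorem~\ref{distance}), exhibits four representatives at each $d\le n$ and three at $d=n+1$, and tallies $4(n-1)+3=4n-1$; it never counts automorphic types globally, and the collapse of two consecutive spokes appears inside its distance-two case exactly as in your proposal (resigning at the outer endpoints sends $\{u_0v_0,u_1v_1\}$ to an outer--outer pair automorphic to $\{u_0u_1,u_2u_3\}$). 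You instead compute stabilizer orbits within the four spoke meta-types, getting $n-1$, $n+1$, $n$, $n$ orbits and hence $4n$ automorphic types --- a count the paper leaves implicit but which agrees with its Lemmas~\ref{MS5.2} and~\ref{MS7.2} for $n=2,3$ --- and then subtract one for the unique collapse. Your route buys a closed-form orbit count and makes transparent why distance $n+1$ supports only three types (no $OO$ pair attains it); the paper's route organizes the separating invariants distance-by-distance. On the final separation step you are, if anything, more candid than the paper: where you flag the uniform-in-$n$ computation of the profile $\bigl(|C^{-}_{4}|,|C^{-}_{6}|,\dots\bigr)$ through $|C^{-}_{2l}|=a_l(e_1)+a_l(e_2)-2b_l$ as the main outstanding obstacle, the paper simply asserts the analogous facts (``Similarly, for each $i$ one can show\dots'', ``It is easy to see\dots''), verifying invariants only at distance two (counts $0,3,2,2$ of negative $4$-cycles and $2,1,2,0$ of negative $(2n+1)$-cycles) and at distance $n+1$ (counts $3,2,4$ of negative $4$-cycles). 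Your additional check that no size-two class collapses to size one or zero is also the right instinct, since the paper reads ``matching'' as minimal signature, though that verification is likewise omitted there.
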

\begin{proof}
From Theorem~\ref{distance}, we know that edges of any $M_2$ can be at maximum distance $n+1$. Four matchings of size two, whose edges are at distance two are $M_{2}^{21} = \{u_{0}u_{1}, v_{0}v_{1}\}, M_{2}^{22} = \{u_{0}u_{1}, u_{2}v_{2}\}$,  $M_{2}^{23} = \{u_{0}u_{1}, v_{1}v_{2}\}$ and $M_{2}^{24} = \{u_{0}u_{1}, u_{2}u_{3}\}$. For any matching of size two whose edges lie either on the inner cycle or on the outer cycle of $P(2n+1,1)$, there exists some automorphism $\rho_k$ of $P(2n+1,1)$ which maps the matching onto $M_{2}^{24}$. If a matching of size two contains any two spokes which are at distance two, then by resigning at their end points lying on the outer cycle, we see that resultant matching is automorphic to $M_{2}^{24}$. Similarly, it can be shown that all other matchings of size two whose edges are at distance two are automorphic to one of $M_{2}^{21} , M_{2}^{22}$ and $M_{2}^{23}$.

Further, the number of unbalanced $4$-cycles in $M_{2}^{21}, M_{2}^{22}, M_{2}^{23}$ and $M_{2}^{24}$ are $0, 3, 2$ and $2$, respectively. The number of unbalanced $2n+1$-cycles in $M_{2}^{21}, M_{2}^{22}, M_{2}^{23}$ and $M_{2}^{24}$ are $2, 1, 2$ and $0$, respectively. These numbers of unbalanced cycles show that $M_{2}^{21}, M_{2}^{22}, M_{2}^{23}$ and $M_{2}^{24}$ are pairwise non-isomorphic. Hence upto switching isomorphism, $M_{2}^{21}, M_{2}^{22}, M_{2}^{23}$ and $M_{2}^{24}$ are the only matchings of size two whose edges are at distance two. Similarly, For each $i$, one can show upto switching isomorphism that there are exactly four matchings of size two whose edges are at distance $i$, where $3 \leq i < n+1$.

Three matchings of size two whose edges are at distance $n+1$ are  $M_{2}^{(n+1)1} = \{u_{0}u_{1}, u_{n+1}v_{n+1}\}$, $M_{2}^{(n+1)2} = \{u_{0}u_{1}, v_{n}v_{n+1}\}$ and $M_{2}^{(n+1)3} = \{u_{0}v_{0}, u_{n}v_{n}\}$. It is clear that any other matching of size two whose edges are at distance $n+1$ is automorphic to one of $M_{2}^{(n+1)1}, M_{2}^{(n+1)2}$ and $M_{2}^{(n+1)3}$. Further, the number of unbalanced $4$-cycles in $M_{2}^{(n+1)1}, M_{2}^{(n+1)2}$ and $M_{2}^{(n+1)3}$ are $3, 2$ and $4$, respectively. Thus these three matchings are pairwise non-isomorphic.

It is easy to see that any two matchings of size two whose edges are at different distances are different upto switching isomorphisms. This concludes the proof of theorem.
\end{proof}

Having proved this theorem, we propose the following problem.

\begin{prob}\label{prob2}
Can we find the number of non-isomorphic matchings of sizes $3,\ldots,2n+1$ in $P(2n+1,1)$ for all $n \geq 4$?
\end{prob}

In Section~\ref{P(3,1)}, we noticed that $P(3,1)$ has no matching (or minimal signature) of size $3$, upto switching isomorphism. In Section~\ref{P(5,1)}, we noticed that $P(5,1)$ has no matching of sizes $4$ and $5$, upto switching isomorphism. In Section~\ref{P(7,1)}, we noticed that $P(7,1)$ has no matching of sizes $5, 6$ and $7$, upto switching isomorphism. On the basis of these observations, we propose the following conjecture.

\begin{conj}
Any matching in $P(2n+1,1)$ can be of size at most $n+1$, where $n \geq 4$.
\end{conj}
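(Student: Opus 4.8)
The plan is to prove the conjecture by a descent argument generalising the forbidden-matching mechanism of Lemma~\ref{forbidden}. Since every minimal signature of the cubic graph $P(2n+1,1)$ is a matching (Theorem~\ref{matching}), it suffices to show that every matching $M$ with $|M|\ge n+2$ is switching equivalent to a matching of strictly smaller size; iterating the reduction then forces every switching class to contain a representative of size at most $n+1$. Thus the conjecture reduces to a single saturation statement: a matching admitting no size-decreasing switching (call it \emph{irreducible}) has at most $n+1$ edges.

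First I would set up a combinatorial model on the index set $\mathbb{Z}_{2n+1}$. Record a matching $M$ by a triple $(A,B,P)$, where $A$ is the set of outer edges $u_iu_{i+1}$ (each occupying the position pair $\{i,i+1\}$), $B$ the set of inner edges $v_iv_{i+1}$, and $P$ the set of indices carrying a spoke $u_iv_i$; write $a=|A|$, $b=|B|$, $s=|P|$, so that $|M|=a+b+s$. The disjointness forced by the matching condition gives the position counts $2a+s\le 2n+1$ and $2b+s\le 2n+1$, whose sum only yields the trivial bound $|M|\le 2n+1$; the entire content lies in exploiting irreducibility. Next I would extend Lemma~\ref{forbidden} to an $n$-independent list of reducible configurations, each verified exactly as there by exhibiting an explicit switching set (an arc of vertices). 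The governing mechanism is that switching along a short arc annihilates three mutually close edges while creating only two: the main families are three indices carrying spokes inside a window of length two (generalising $\sigma_7$), two parallel cycle edges of one cycle straddling a single edge of the other cycle (generalising $\sigma_1,\sigma_2,\sigma_4$), and the longer-range inner/outer patterns (generalising $\sigma_3,\sigma_5$). Using the automorphisms $\rho_k,\delta_k,\gamma$ and Theorem~\ref{distance} I would reduce the verification to one representative per family.

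The heart of the argument, and the step I expect to be the main obstacle, is a packing lemma translating irreducibility into the bound $a+b+s\le n+1$. The intended route is a discharging argument around $\mathbb{Z}_{2n+1}$: each forbidden configuration rules out a specific local over-crowding, so an irreducible matching must leave enough free indices between its edges; because spokes occupy only a single index they let the density slightly exceed one half, yielding the sharp value $n+1$ rather than $n$, which is matched by the extremal examples $\Sigma_{38},\Sigma_{43},\Sigma_{47}$ of Section~\ref{P(7,1)}. The difficulty is twofold. First, some reducible configurations are not local --- the $\sigma_3$- and $\sigma_5$-type patterns spread over an arc of length five --- so one cannot merely demand that consecutive edges be far apart; the outer cycle, the inner cycle, and the spokes must be analysed \emph{simultaneously}. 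Second, the odd length $2n+1$ must be used to exclude a perfectly periodic packing that would otherwise reach size roughly $\tfrac{3}{2}n$. Equivalently, the claim asserts that the covering radius of the cut (cocycle) space of $P(2n+1,1)$ equals $n+1$, and it is this extremal character that makes the packing lemma the crux. Once it is established, repeated application of the generalised Lemma~\ref{forbidden} drives every matching down to size at most $n+1$, which proves the conjecture.
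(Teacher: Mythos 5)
First, a point of comparison: the paper does not prove this statement at all --- it is posed as a conjecture, supported only by the exhaustive case analyses of $P(3,1)$, $P(5,1)$ and $P(7,1)$ --- so the question is simply whether your proposal constitutes a proof. It does not. The reduction you set up is sound and follows the paper's own methodology: by Theorem~\ref{matching} every switching class of a cubic graph has a matching representative, so it suffices to show that every matching of size at least $n+2$ admits a size-decreasing switching, and your $(A,B,P)$ encoding with the counts $2a+s\le 2n+1$ and $2b+s\le 2n+1$ is correct bookkeeping. But everything then hinges on two claims you announce and do not establish: (i) that there is an $n$-independent list of switching-reducible configurations generalising Lemma~\ref{forbidden}, each verified by an explicit switching set, whose avoidance is forced on any irreducible matching; and (ii) the ``packing lemma'' that a matching avoiding all of them satisfies $a+b+s\le n+1$. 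Claim (ii) is exactly the conjecture restated in contrapositive form: you supply no charge assignment, no discharging rules, and no mechanism by which the odd length $2n+1$ excludes the dense periodic packings that you yourself identify as the obstruction. Completeness of the forbidden list is equally unaddressed, and the paper's own $P(7,1)$ analysis warns that it is delicate: in Theorem~\ref{M5}, case (iii), a matching with two consecutive spokes is not reduced by containing one of the $\sigma_i$ but by a switching that first produces a \emph{non-matching} signature, which is then handled separately. So the claim that a few local families suffice for all $n$ is a genuine missing step, not a routine verification.

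Two smaller inaccuracies. The extremal witnesses you cite are off: $\Sigma_{38}(v_0u_0, v_2u_2, u_4v_4)$ has size three, not size $n+1=4$; the surviving size-four classes in the $P(7,1)$ analysis are those represented by $\Sigma_{43}$ and $\Sigma_{47}$. And the covering-radius reformulation (the maximum size of a minimal signature equals the covering radius of the cut space of $P(2n+1,1)$, viewed as a binary code of length $6n+3$) is correct and clarifies why the statement is extremal, but it is a restatement rather than progress. In summary, your architecture --- descent via an extended family of forbidden configurations --- is the natural attack and is consistent with how the paper handles $P(7,1)$, but under your proposal the conjecture remains exactly as open as it is in the paper, because the packing lemma, which is the entire content, is left unproved.
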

Now Problem~\ref{prob2} can be reformulated as follows.

\begin{prob}\label{prob3}
Can we find the number of non-isomorphic matchings of size $3, 4, 5,\ldots,n+1$ in $P(2n+1,1)$ for all $n \geq 4$ ? 
\end{prob}

\end{document}